\numberwithin{equation}{section}
\numberwithin{figure}{section}
\theoremstyle{plain}
\newtheorem{thm}{Theorem}[section]
\newtheorem{lemma}[thm]{Lemma}
\newtheorem{prop}[thm]{Proposition}
\newtheorem{corollary}[thm]{Corollary}
\theoremstyle{definition}
\newtheorem{defn}[thm]{Definition}
\newtheorem{example}[thm]{Example}
\newtheorem{remark}[thm]{Remark}
\newtheorem{claim}[thm]{Claim}
\newtheorem{notation}[thm]{Notation}
\newcommand{\Z}{\mathbb{Z}}
\newcommand{\R}{\mathbb{R}}
\newcommand{\N}{\mathbb{N}}
\newcommand{\Hy}{\mathbb{H}}
\newcommand{\cC}{\mathcal{C}}
\newcommand{\cO}{\mathcal{O}}
\newcommand{\cP}{\mathcal{P}}
\newcommand{\cY}{\mathcal{Y}}
\newcommand{\diam}{\operatorname{diam}}
\newcommand{\la}{\langle}
\newcommand{\ra}{\rangle}
\newcommand{\p}{\partial}
\def\<{{\langle}}
\def\>{{\rangle}}
\definecolor{amethyst}{rgb}{0.6, 0.4, 0.8}
\newcommand{\hide}[1]{}
\title{Free products from spinning and rotating families}
\author{Mladen Bestvina\thanks{bestvina@math.utah.edu} \\ University of Utah\thanks{Department of Mathematics, University of Utah, 155 S 1400 E, Room 233, Salt Lake City, UT 84112, USA}
  \and Ryan Dickmann\thanks{dickmann@math.utah.edu} \\ University of Utah
  \and George Domat\thanks{domat@math.utah.edu, Corresponding Author} \\ University of Utah
  \and Sanghoon Kwak\thanks{kwak@math.utah.edu} \\ University of Utah
  \and Priyam Patel\thanks{patelp@math.utah.edu} \\ University of Utah
  \and Emily Stark\thanks{estark@wesleyan.edu} \\ Wesleyan University\thanks{Department of Mathematics, Wesleyan University, 265 Church Street, Middletown, CT 06459, USA}}
\date{\today}
\begin{document}

\maketitle

\begin{abstract}
  The far-reaching work of Dahmani--Guirardel--Osin~\cite{DGO} and
  recent work of Clay--Mangahas--Margalit~\cite{claymangahasmargalit}
  provide geometric approaches to the study of the normal closure of a
  subgroup (or a collection of subgroups) in an ambient group
  $G$. Their work gives conditions under which the normal closure
  in $G$ is a free product. In this paper we unify their results and
  simplify and significantly shorten the proof of the~\cite{DGO} theorem.
\end{abstract}


\section{Introduction}

Using geometry to understand the algebraic properties of a group is
a primary aim of geometric group theory. This paper focuses on
detecting when a group has the structure of a free product. The
following theorem follows from Bass-Serre theory.

\begin{thm}\label{1.1}
  Suppose a group $G$ acts on a simplicial tree $T$ without
  inversions and with trivial edge stabilizers. Suppose also that $G$ is
  generated by the vertex stabilizers $G_v$. Then, there is a subset
  $\mathcal O$ of the set of vertices of $T$ intersecting each $G$-orbit in
  one vertex such that
  $$G=*_{v\in\mathcal O} G_v.$$
\end{thm}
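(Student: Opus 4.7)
The plan is to apply the fundamental theorem of Bass-Serre theory; triviality of the edge stabilizers will strip the gluing data from the usual graph-of-groups presentation and reduce the conclusion to a free product. First, I would form the quotient graph $Y = G\backslash T$, which is connected because $T$ is, and choose a maximal subtree $Y_0 \subseteq Y$. Since $Y_0$ is simply connected it lifts isomorphically to an embedded subtree $\widetilde{Y_0} \subset T$. Extend this lift to a choice of representative $\tilde v \in V(T)$ for every $v \in V(Y)$, and take $\mathcal O$ to be the collection of these representatives; by construction $\mathcal O$ meets each $G$-orbit of vertices in exactly one point. For each edge $e=(u,w)$ of $Y$ not lying in $Y_0$, lift $e$ to an edge $\tilde e \subset T$ with initial vertex $\tilde u$ and choose $t_e \in G$ with $t_e\tilde w$ equal to the terminal vertex of $\tilde e$; for $e \in Y_0$ take $t_e = 1$.

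Next I would invoke the fundamental theorem of Bass-Serre theory (Serre, \emph{Trees}, I.5.4): $G$ has a presentation generated by $\bigcup_{\tilde v \in \mathcal O} G_{\tilde v}$ together with the elements $\{t_e\}$, whose relations are the internal relations of each $G_{\tilde v}$ and, for every edge $e=(u,w)$ of $Y$, an identification of the edge stabilizer $G_{\tilde e}$ (sitting inside $G_{\tilde u}$) with its $t_e$-conjugate (sitting inside $G_{\tilde w}$). Since every edge stabilizer is trivial, all edge relations disappear, yielding
\[G \;\cong\; \Bigl(*_{\tilde v \in \mathcal O} G_{\tilde v}\Bigr) * F,\]
where $F$ is the free group on $\{t_e : e \in E(Y) \setminus E(Y_0)\}$.

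To finish, set $A = *_{\tilde v \in \mathcal O} G_{\tilde v}$ and let $N$ denote the normal closure of $A$ in $G = A * F$. Every vertex stabilizer $G_v$ for $v \in V(T)$ is a $G$-conjugate of some $G_{\tilde v} \subset A$, so $G_v \subseteq N$; hence the hypothesis $G = \langle \bigcup_{v \in V(T)} G_v \rangle$ forces $G = N$. But the retraction $A * F \twoheadrightarrow F$ sending $A$ to $1$ also kills $N$, so $F = 1$. Therefore $Y = Y_0$ is a tree and $G = *_{\tilde v \in \mathcal O} G_{\tilde v}$, as desired.

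The main obstacle I anticipate lies in invoking the Bass-Serre presentation accurately in the middle step: orientations and the two ways each edge stabilizer embeds into its endpoint vertex stabilizers are the classical pitfalls, but triviality of edge stabilizers renders these issues vacuous here. A secondary check is that the generating hypothesis concerns \emph{all} vertex stabilizers rather than only those indexed by $\mathcal O$, which is handled by the conjugacy observation at the start of the final step.
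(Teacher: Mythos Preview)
Your proposal is correct and aligns with the paper's treatment: the paper does not give a proof of Theorem~1.1 at all, merely stating that it ``follows from Bass--Serre theory.'' Your argument spells out exactly this derivation---the graph-of-groups presentation collapses to a free product with a free factor when edge groups are trivial, and the generation hypothesis then kills the free factor via the retraction $A*F\to F$---so there is nothing to compare beyond noting that you have supplied the details the paper omits.
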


Dahmani--Guirardel--Osin~\cite{DGO}, inspired by the ideas of
Gromov~\cite{gromov01}, provided a far-reaching generalization of
the theorem above.  The simplicial tree above is replaced by a
$\delta$-hyperbolic space, and the group acts via a {\it very
  rotating} family of subgroups. Under these conditions, they
conclude that the group is a free product of
subgroups in the family.

\begin{thm} \cite[Theorem 5.3a]{DGO}\label{DGO}
  Let $G$ be a group acting by isometries on a
  $\delta$-hyperbolic geodesic metric space $X$, and let $\cC =
  \left(C, \{G_c \vert c \in C\}\right)$ be a $\rho$-separated
  very rotating family for a sufficiently large
  $\rho=\rho(\delta)$. Then the subgroup of $G$ generated by the
  set $\cup_{c \in C} G_c$ is isomorphic to the free product
  $\Asterisk_{c \in C'} G_c$, for some subset $C' \subset
  C$. Moreover, every element in this subgroup is either a loxodromic isometry of $X$ or it is contained in some $G_c$.
\end{thm}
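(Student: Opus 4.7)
The plan is a ping-pong argument on a broken geodesic in $X$, followed by the local-to-global principle for piecewise geodesics in $\delta$-hyperbolic spaces. First, I would choose $C'\subset C$ to contain one representative from each orbit of apices (under the natural action), and set $H:=\langle\bigcup_{c\in C}G_c\rangle$. Since the assignment $c\mapsto G_c$ is equivariant with respect to the $H$-action on $C$, the subgroups $\{G_{c'}\}_{c'\in C'}$ generate $H$. The theorem then reduces to showing that any reduced word $w=g_1g_2\cdots g_n$ with $n\ge 1$, $g_i\in G_{c_i}\setminus\{1\}$, $c_i\in C'$, and $c_i\ne c_{i+1}$, represents a non-trivial element of $H$, and acts loxodromically on $X$ when $n\ge 2$.

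The key construction is the piecewise geodesic $P_w$ in $X$ with vertices $p_i:=g_1g_2\cdots g_i\cdot c_{i+1}$ for $i=0,1,\ldots,n-1$ (empty product equal to $1$) and geodesic edges $[p_{i-1},p_i]$. Translating the $i$th edge by $(g_1\cdots g_i)^{-1}$ and using $g_ic_i=c_i$, it becomes a geodesic from $c_i$ to $c_{i+1}$, of length $\ge\rho$ by $\rho$-separation. At the interior vertex $p_i$, the two incident edges translate to geodesics at $c_{i+1}$ pointing toward $c_i$ and toward $g_{i+1}c_{i+2}$ respectively. The very rotating property of the non-trivial element $g_{i+1}\in G_{c_{i+1}}$, combined with $\rho$-separation and standard $\delta$-hyperbolic estimates, should bound the Gromov product $(c_i,g_{i+1}c_{i+2})_{c_{i+1}}$ by a constant depending only on $\delta$. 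In other words, $P_w$ turns by an angle close to $\pi$ at each interior vertex while all edges have length $\ge\rho$.

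For $\rho=\rho(\delta)$ large enough, the local-to-global principle for piecewise geodesics in $\delta$-hyperbolic spaces then implies that $P_w$ is a uniform quasi-geodesic; in particular, its endpoints lie at distance $\gtrsim n\rho$, so $w\ne 1$. Iterating on $w^k$ concatenates translates of $P_w$ into a bi-infinite quasi-geodesic on which $\langle w\rangle$ acts cocompactly by translation, forcing $w$ to be loxodromic on $X$. The main obstacle, which is where the size of $\rho=\rho(\delta)$ gets pinned down, is the turning-angle estimate at each $p_i$: one must combine the very rotating condition (which controls how a single rotation displaces a single far point) with $\delta$-hyperbolic control on Gromov products involving both rotated and un-rotated points. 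Once this local estimate is in hand, the free-product decomposition and the loxodromic-or-elliptic dichotomy follow as bookkeeping consequences of the quasi-geodesic conclusion.
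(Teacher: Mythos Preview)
Your approach is quite different from the paper's: the paper does not work directly in $X$ at all, but instead builds a projection complex $\cP$ with vertex set $C$ (using spheres around apices and path-metrics on their complements), shows that $\{G_c\}$ forms an $L$-spinning family on $\cP$ with $L$ exponential in $\rho$, and then proves the free-product structure via a windmill/Bass--Serre argument on $\cP$. Your strategy of a broken geodesic $P_w$ in $X$ is closer in spirit to the original argument in \cite{DGO}, but as written it has a genuine gap at exactly the point you flag as the ``main obstacle.''

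The turning-angle estimate you need, namely that $(c_i,\,g_{i+1}c_{i+2})_{c_{i+1}}$ is bounded by a constant depending only on $\delta$, is \emph{false} in general. The very rotating condition only tells you that $[c',\,gc']$ passes through $c$ for a \emph{single} apex $c'$; it gives no control over the pair $(c_i,\,g_{i+1}c_{i+2})$ when $c_i\neq c_{i+2}$. Concretely, if $c_i$ and $c_{i+2}$ lie in different directions from $c_{i+1}$ and $g_{i+1}$ happens to carry the $c_{i+2}$-direction onto the $c_i$-direction, then $g_{i+1}c_{i+2}$ lies between $c_{i+1}$ and $c_i$ and the Gromov product can be of order $\rho$, so $P_w$ backtracks almost an entire segment. (Already in a tree with a $\Z/3$-rotation at $c_{i+1}$ one sees this.) The original \cite{DGO} proof handles precisely this situation with a shortening/greedy step: when the angle at $p_i$ is bad, one rewrites the word by absorbing part of it into a rotation at the offending apex and reduces length. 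Without that step your $P_w$ need not be a quasi-geodesic. A secondary issue: the assertion that $\{G_{c'}\}_{c'\in C'}$ generates $H$ does not follow from equivariance alone (take $G=S_3$ acting on its three transpositions by conjugation, $G_c=\langle c\rangle$, one orbit, one representative); in this setting it is true, but it is a consequence of the free-product structure rather than an input to it.
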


The set of {\it apices} $C\subset X$ (and also the pair $\cC$) is {\it
  $\rho$-separated} if $d(c,c')\geq \rho$ for all distinct $c,c'\in
C$. The family $\{G_c\}_{c\in C}$ of subgroups of $G$ is {\it
  rotating} if
\begin{enumerate}[(i)]
  \item $C$ is $G$-invariant,
  \item $G_c$ fixes $c$, for every $c\in C$,
  \item $G_{g(c)}=gG_cg^{-1}$ for every $g\in G$ and $c\in C$.
\end{enumerate}
Note that $G_c$ is a normal subgroup of the
stabilizer $Stab_G(c)$ and similarly the subgroup $\<G_c\mid c\in
C\>$ of $G$ generated by all of the $G_c$ is normal in $G$. A rotating
family is {\it very rotating} if in addition
\begin{enumerate}[(iv)]
  \item for any distinct $c,c'\in C$ and every $g\in
        G_c\setminus\{1\}$ every geodesic between $c'$ and $g(c')$
        passes through $c$.
\end{enumerate}

Note that (iv) is a bit weaker in
the presence of sufficient separation than the definition in
\cite{DGO}; see \cite[Lemma 5.5]{DGO}. As an application,
Dahmani--Guirardel--Osin solve a long-standing open problem by
showing that the normal closure of a suitable power of any
pseudo-Anosov mapping class in a mapping class group is free and
all nontrivial elements in the normal closure are pseudo-Anosov. We discuss this in more
detail below.

An important variation of the Dahmani--Guirardel--Osin theorem was
recently proved by
Clay--Mangahas--Margalit~\cite{claymangahasmargalit}. In that
setting, the group $G$ acts on a {\it projection complex} via a
{\it spinning family} of subgroups. As an application, they
determine the isomorphism type of the normal closure of a suitable
power of various kinds of elements in the mapping class group. We
will discuss this in more detail below as well. See related work in
\cite{dahmani, dahmanihagensisto, claymangahas}.

\begin{thm} \cite[Theorem 1.6]{claymangahasmargalit}. \label{thm:cmm_intro}
  Let $G$ be a group acting by isometries on a projection
  complex~$\cP$ with vertex set $V\cP$ and preserving the
  projection data $(\mathcal Y, \{\pi_X(Y)\},\theta)$. Let $\{G_c\}_{c \in V\cP}$ be an
  $L$-spinning family of subgroups of $G$ for $L=L(\cP)$
  sufficiently large. Then the subgroup of $G$ generated by the set
  $\{G_c\}_{c \in V\cP}$ is isomorphic to the free product
  $\Asterisk_{c \in \cO} G_c$ for some subset $\cO \subset
  V\cP$. Moreover, every element of the subgroup is either
  loxodromic in $\cP$ or is contained in some $G_c$. 
\end{thm}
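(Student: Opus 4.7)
The plan is to mirror the strategy suggesting Theorem~\ref{DGO}, replacing the hyperbolic fact ``geodesic passes through the apex'' with the projection-complex analogue ``projection distance at the apex is large''. Write $H = \la G_c \mid c \in V\cP \ra$. It suffices to show that every reduced word $w = g_n g_{n-1} \cdots g_1$ with $g_i \in G_{c_i} \setminus \{1\}$ and $c_i \neq c_{i+1}$ acts loxodromically on $\cP$: this delivers at once $w \neq 1$, $w \notin \bigcup_c G_c$, the ``moreover'' clause, and, after fixing a transversal $\cO$ for the $H$-action on $V\cP$, the decomposition $H \cong *_{c \in \cO} G_c$.

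To exhibit loxodromic behaviour I would trace out a candidate quasi-geodesic in $\cP$. With $h_0 = 1$ and $h_i = g_1 \cdots g_i$, consider the sequence $p_i = h_i(c_{i+1})$ for $0 \leq i \leq n-1$ (so $p_0 = c_1$), extended to a bi-infinite sequence by $w$-translates. Since $g_i$ fixes $c_i$, equivariance of the projection data gives
\[
d_{p_i}(p_{i-1}, p_{i+1}) \;=\; d_{c_{i+1}}\bigl(c_i,\ g_{i+1}(c_{i+2})\bigr).
\]
The $L$-spinning hypothesis applied to $g_{i+1} \in G_{c_{i+1}} \setminus \{1\}$ and $c_{i+2} \neq c_{i+1}$ yields $d_{c_{i+1}}(c_{i+2}, g_{i+1}(c_{i+2})) \geq L$, and the coarse triangle inequality in the projection axioms then produces
\[
d_{p_i}(p_{i-1}, p_{i+1}) \;\geq\; L - d_{c_{i+1}}(c_i, c_{i+2}) - \theta.
\]

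The main obstacle is the case in which $d_{c_{i+1}}(c_i, c_{i+2})$ is itself of order $L$, so that this estimate collapses. Here I would invoke the Behrstock inequality built into the projection data: when $d_{c_{i+1}}(c_i, c_{i+2})$ exceeds $\theta$, one of the companion distances $d_{c_i}(c_{i+1}, c_{i+2})$ or $d_{c_{i+2}}(c_{i+1}, c_i)$ must be at most $\theta$, forcing $c_{i+1}$ to sit projectively between $c_i$ and $c_{i+2}$; in that case the projection-complex distance from $p_{i-1}$ to $p_{i+1}$ already absorbs the contribution at $c_{i+1}$ through the BBF distance formula, so no cancellation can occur. Once $L \gg \theta$, combining the two cases shows that $(p_i)$ satisfies the standard BBF criterion for an unparametrized quasi-geodesic in $\cP$, whence $\{w^k(p_0)\}_{k \in \Z}$ is unbounded and $w$ is loxodromic. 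In keeping with the paper's unification goal, this dynamical picture can then be packaged as a simplicial $H$-tree with trivial edge stabilizers, reducing the free product conclusion to Theorem~\ref{1.1} in direct parallel with the Bass--Serre intuition behind Theorem~\ref{DGO}.
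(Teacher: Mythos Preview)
There is a genuine gap. Your reduction ``every reduced word of length $\geq 2$ acts loxodromically'' is already false: $w=g_2 g_1 g_2^{-1}$ with $g_1\in G_{c_1}\setminus\{1\}$, $g_2\in G_{c_2}\setminus\{1\}$, $c_1\neq c_2$ is reduced of length~$3$ but lies in $G_{g_2(c_1)}$ and fixes $g_2(c_1)$. More concretely, your candidate quasi-geodesic $(p_i)$ can literally backtrack. Since the vertex set is $G$-invariant, $c_3:=g_2^{-1}(c_1)$ is a legitimate apex with $c_3\neq c_2$; for any reduced word beginning $g_3g_2g_1$ with $g_3\in G_{c_3}$ one computes $p_2=h_2(c_3)=g_1g_2\bigl(g_2^{-1}(c_1)\bigr)=g_1(c_1)=c_1=p_0$. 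At the corresponding index your own estimate reads $d_{p_1}(p_0,p_2)\geq L-d_{c_2}(c_1,c_3)$, and the spinning hypothesis applied to $g_2$ gives $d_{c_2}(c_1,c_3)=d_{c_2}(c_1,g_2^{-1}c_1)\geq L$, so the bound is vacuous. Your Behrstock ``absorption'' paragraph cannot repair this: the distance $d_{\cP}(p_{i-1},p_{i+1})$ you appeal to is zero. The phenomenon is not an edge case --- it occurs whenever the letters of the word move one apex onto another, which is exactly the situation the argument must control.

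By contrast, the paper does \emph{not} attempt to associate a quasi-geodesic to an arbitrary reduced word. It builds an increasing family of windmills $W_k$ and shows inductively that any two points of $W_k$ are joined by an $L$-canoeing path, where the large-angle junctions are produced geometrically (at intersection points of translates of $N_{k-1}$) rather than read off a normal form. This yields a tree --- the skeleton of the cover of $W_k$ by translates of $N_{k-1}$ --- on which $G_k$ acts with trivial edge stabilizers, and the free product follows from Theorem~\ref{1.1}. The loxodromic statement is then proved \emph{afterwards}: one first uses the Bass--Serre tree to see that an element not conjugate into any $G_c$ is loxodromic on the tree, and then transfers this to $\cP$ via the canoeing paths (Corollary~\ref{cor:cpbound}). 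If you want to salvage a normal-form approach, you would need something closer to the pivot-point bookkeeping in the original Clay--Mangahas--Margalit argument; the naive orbit sequence does not suffice.
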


We next explain the terminology in the above theorem. The {\it
  projection data} is a collection of metric spaces  $\mathcal
Y=\{X,Y,Z,\cdots\}$ (with
infinite distance within a metric space allowed) together with ``projections''
$\pi_X(Y)\subset X$ for $X,Y\in\mathcal Y$ distinct, satisfying
the following {\it projection axioms} for some $\theta\geq 0$
(called the {\it projection constant},
where we set $d_X(Y,Z)=\diam (\pi_X(Y)\cup \pi_X(Z))$:
\begin{enumerate}[(P1)]
  \item $\diam \pi_X(Y)\leq\theta$, for any $X\neq Y$,
  \item (the Behrstock inequality) if $d_X(Y,Z)>\theta$ then
        $d_Y(X,Z)\leq\theta$, and
  \item for any $X,Y$ the set $\{Z\neq X,Y\mid
        d_Z(X,Y)>\theta\}$ is finite.
\end{enumerate}
From this data, Bestvina--Bromberg--Fujiwara
\cite{bestvinabrombergfujiwara} construct a graph
$\cP=\cP(\mathcal Y)$, called the {\it projection complex}, with
the vertices in 1-1 correspondence with the spaces in $\mathcal
Y$. 
Roughly speaking, $X$ and $Y$ are connected by an edge if  $d_Z(X,Y)$ is small for any $Z$. This graph is connected, and it is
quasi-isometric to a tree. Any group $G$ acting by isometries on
the disjoint union $\bigsqcup_{X\in\mathcal Y} X$, permuting the
spaces and commuting with projections
(i.e. $g(\pi_X(Y))=\pi_{g(X)}g(Y)$), acts by isometries on
$\cP$, and we say that $G$ {\it preserves the projection data}.

An {\it $L$-spinning family} is a family $\{G_c\}$ parametrized by
the vertices $c\in V\cP$ satisfying
\begin{enumerate}[(a)]
  \item $G_c$ fixes $c$,
  \item $G_{g(c)}=gG_cg^{-1}$ for $g\in G$, $c\in C$, and
  \item $d_c(c',g(c'))>L$ for $c\neq c'$ and $g\in
        G_c\smallsetminus\{1\}$.
\end{enumerate}

The main goal of this paper is to simplify and significantly
shorten the proof of the Dahmani--Guirardel--Osin theorem using
the Clay--Mangahas--Margalit theorem and the machinery of
projection complexes. We also present a variant of the proof of
the~\cite{claymangahasmargalit} theorem to directly construct an
action of the group on a tree as in Theorem 1.1. Given a group
action on a $\delta$-hyperbolic metric space equipped with a very rotating
family of subgroups, we construct an action of that group on a
projection complex with the same family acting as a spinning
family. While our proof of Theorem~\ref{thm:cmm_intro} still uses
the construction of windmills (which are used in \cite{DGO} and
\cite{gromov01}), our work differs from
\cite{claymangahasmargalit} in that we find a natural tree on
which $G$ acts as in Theorem \ref{1.1} and eliminate the need to
work with normal forms. We also introduce the notion of {\it
  canoeing} in a projection complex, which is inspired by the
classic notion of canoeing in the hyperbolic plane (see
Section~\ref{sec:canoe}), and enables us to further streamline
some of the arguments from~\cite{claymangahasmargalit}. 

\begin{thm}\label{thm:intro_build_complex}
  Let $G$ be a group acting by isometries on a $\delta$-hyperbolic
  metric space $X$. Let $\cC = \bigl(C, \{G_c \, | \, c \in C\}\bigr)$ be a rotating family, where $C \subset X$ is $\rho$-separated for $\rho \geq 20\delta$ and $G_c \leq G$. Then the following hold.
  \begin{enumerate}[(1)]
    \item The group $G$ acts by isometries and preserves the
          projection data on a projection complex associated to $C$,
          with the projection constant $\theta=\theta(\delta)$.
    \item If $\cC$ is a very rotating family, then the
          family of subgroups $\{G_c\}_{c \in \cC}$ forms an
          $L(\rho)$-spinning family for the action of $G$ on the
          projection complex.
    \item $L(\rho)\to\infty$ as $\rho\to\infty$. In particular, we can take $L = 2^{\frac{\rho-6\delta-4}{2\delta}} - 4 - 248\delta$, so that $L$ grows exponentially with respect to $\rho$. 
  \end{enumerate}
\end{thm}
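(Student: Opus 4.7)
The plan is to build, for each apex $c \in C$, a metric space $X_c$ and projections $\pi_c(c') \subset X_c$ for every $c' \in C \setminus \{c\}$, directly from the hyperbolic geometry of $X$. The natural candidate is to take $X_c$ to be a ball around $c$ in $X$ (say of radius $R$ of order $\rho$), with a suitable metric, and to define $\pi_c(c')$ as the set of all points of $X_c$ lying on some $X$-geodesic from $c$ to $c'$. Because $G$ acts by isometries on $X$ and preserves $C$ by axiom (i) of the rotating family, this set-valued definition is automatically equivariant, $g(\pi_c(c')) = \pi_{g(c)}(g(c'))$, which takes care of the ``preserves the projection data'' condition in (1) as soon as the projection axioms are verified.

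Part (1) then reduces to checking (P1)--(P3). Axiom (P1) follows from the standard fact that any two geodesics in $X$ sharing endpoints $O(\delta)$-fellow-travel, forcing $\diam \pi_c(c') = O(\delta)$, so we may take $\theta = \theta(\delta)$ independent of $\rho$. For the Behrstock axiom (P2), if $d_c(c',c'') > \theta$ then the initial directions of $[c,c']$ and $[c,c'']$ diverge quickly at $c$; a thin-triangle argument in $cc'c''$ shows that $c$ lies far from the opposite side $[c',c'']$, and that $[c',c]$ and $[c',c'']$ fellow-travel for a long initial segment from $c'$, so $\pi_{c'}(c)$ and $\pi_{c'}(c'')$ are forced together. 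Axiom (P3) uses the $\rho$-separation of $C$: only boundedly many apices can lie within $O(\delta)$ of any geodesic $[c',c'']$, since distinct apices are $\rho$-apart.

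For part (2), let $g \in G_c \setminus \{1\}$ and $c' \neq c$. The very rotating hypothesis says that every geodesic from $c'$ to $gc'$ passes through $c$, so splitting such a geodesic at $c$ yields geodesics $[c,c']$ and $[c,gc']$ whose initial tangent directions at $c$ are exactly opposite. Therefore $\pi_c(c')$ and $\pi_c(gc')$ sit on opposite sides of $c$ inside $X_c$, and any diameter computation in $X_c$ will register this as a ``maximal'' separation. The qualitative conclusion $L(\rho) \to \infty$ in the first half of (3) will then follow from the fact that the maximal possible separation inside $X_c$ grows with $R$, hence with $\rho$.

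The main obstacle is the quantitative exponential bound in part (3). With the straightforward choice of $X_c$ as a ball in $X$ endowed with the induced metric, a pair of opposite radial segments realizes distance only $2R$, which is linear in $\rho$; the stated bound $L \sim 2^{(\rho-6\delta-4)/(2\delta)}$ cannot come from this alone. The $2^{1/(2\delta)}$ base is a tell-tale sign of the hyperbolic doubling phenomenon: in a $\delta$-hyperbolic space, the number of $O(\delta)$-separated ``directions'' at $c$ visible at radius $R$ grows like $2^{R/\delta}$. To extract this exponential one has to endow $X_c$ with a finer structure --- either a path-metric on a sphere/annulus around $c$ that records branching, or equivalently a combinatorial lemma asserting that any chain of $O(\delta)$-close waypoints connecting $\pi_c(c')$ to $\pi_c(gc')$ inside $X_c$ must have exponentially many links. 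Making this precise while preserving the validity of (P1)--(P3) with $\theta = \theta(\delta)$ independent of $\rho$ is, I expect, where the bulk of the work lies; the other two parts should follow once the right metric on $X_c$ is in place.
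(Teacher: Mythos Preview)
Your proposal converges to the paper's approach, though with an initial misstep and a misplaced sense of where the work lies.

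The paper takes $X_c$ to be the \emph{sphere} $S_c = \partial B_R(c)$ (not the full ball), equipped with the path metric on $X \setminus B_R(c)$; the projection $\pi_c(c')$ is the intersection of geodesics $[c,c']$ with this sphere. Your initial ``ball with induced metric'' would fail (P1) outright, since $\pi_c(c')$ would then be a radial segment of length $R$, not $O(\delta)$. You do eventually suggest the right object (``path-metric on a sphere/annulus''), which is precisely what the paper uses.

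Your assessment of the difficulty is inverted. With the sphere-and-complement-path-metric, axioms (P1)--(P3) are routine thin-triangle arguments, each holding already with $\theta = 4\delta$: for instance (P1) holds because one can step $\delta$ outward along one geodesic $[c,c']$, jump $\leq 2\delta$ across to another, and step $\delta$ back, all outside $B_R(c)$. The workhorse behind (P2) and (P3) is the contrapositive recorded as Lemma~\ref{boundedproj}: if some geodesic $[b,c']$ misses $B_{R+2\delta}(c)$ then $d_c(b,c') \leq 4\delta$. The exponential in (3), by contrast, requires no new idea once this metric is in place: it is the classical estimate (Lemma~\ref{lemma_outside_ball}) that any path in $X \setminus B_R(m)$ joining the two ends of a diameter of $B_R(m)$ has length at least $2^{(R-1)/\delta}$. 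The very rotating condition puts $\pi_c(c')$ and $\pi_c(gc')$ on opposite ends of such a diameter, and taking $R = \rho/2 - 3\delta$ (the largest value keeping the balls $B_R(c)$ separated) yields the stated bound, after the $L \mapsto L - 2\theta$ correction incurred when upgrading to the strong projection axioms.
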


To prove Theorem~\ref{thm:intro_build_complex}, we construct a
projection complex via the Bestvina--Bromberg--Fujiwara axioms. These
axioms require us to first define for each $c \in C$ a metric space
$S_c$ (with infinite distances allowed) and projections
$\pi_{S_c}(S_{c'})$, which we abbreviate to $\pi_c(c')$. A standard example of
such a construction is the following. Take a closed hyperbolic surface
$S$ and a closed (not necessarily simple) geodesic $\alpha$. Consider
the universal cover $\tilde S=\mathbb H^2$ and the set $\mathcal Y$ of all
lifts of $\alpha$. For two different lifts $A,B\in\mathcal Y$, define
$\pi_A(B)\subset A$ to be the nearest point projection of $B$ to
$A$. This will be an open interval in $A$ whose diameter is uniformly
bounded independently of $A,B$ (but which depends on $\alpha$). Roughly
speaking, $\pi_A(B)$ can have a large diameter only of $B$ fellow
travels $A$ for a long time. It is not hard to see that the projection
axioms hold in this case. A similar construction can be carried out
when $S$ is a hyperbolic surface with a cusp and $\alpha$ is a
horocycle. Now $\mathcal Y$ is an orbit of pairwise disjoint horocycles in
$\mathbb H^2$ and $\pi_A(B)$ is defined as the nearest point
projection of $B$ to $A$ as before. There are now two natural choices
of a metric on horocycles in $\mathcal Y$: one can take the intrinsic metric
so that it is isometric to $\R$ or the induced metric from $\mathbb
H^2$. Either choice satisfies the projection axioms, but note here that
the intrinsic metric can also be defined as the path metric where
paths are not allowed to intersect the open horoball cut out by the
horocycle.

The starting point of our proof of Theorem \ref{DGO} is the
construction of the projection complex whose vertex set is the set
$C$ of apices, inspired by the horocycle example. To each
$c\in C$ we associate the sphere $S_c$ of radius $R$ centered at
$c$. The number $R$ is chosen carefully. The open balls $B_c$ cut out
by the spheres should be pairwise disjoint and a reasonable distance
apart (a fixed multiple of $\delta$), yet big enough so that paths
in the complement of $B_c$ joining points $x,y\in S_c$ on opposite
sides of the ball are much longer (exponential in $R$) than a geodesic in $X$
joining $x,y$ (which is linear in $R$). The projection $\pi_c(c')$ for
$c,c'\in \cC$, $c\neq c'$ is the set of all points in $S_c$ that lie
on a geodesic between $c$ and $c'$. The metric we take on $S_c$ is
induced by the path metric in $X\smallsetminus B_c$ (this can take
value $\infty$ if the ball disconnects $X$). We check that with these
definitions the projection axioms hold (see Section \ref{subsec:axioms}). Thus, the group $G$
acts on the projection complex and we check that the groups $G_c$ form
a spinning family (see Section \ref{subsec:spinning}), which proves Theorem
\ref{thm:intro_build_complex}.

The same proof goes through with a slightly weaker hypothesis that the
family $\{G_c\}$ is \emph{fairly rotating}, instead of very rotating (with
slightly different constants).
Here we require only that geodesics between $c'$ and $g(c')$ pass
within $1$ of $c$,
for $g\in G_c\setminus\{1\}$, instead of
passing through $c$. This situation naturally occurs. As an
example, consider a closed hyperbolic orbifold $S$ with one cone point, with cone angle $2\pi/n$ for $n>2$. The orbifold universal cover
$\tilde S=\mathbb H^2$ admits an action by the orbifold fundamental group
$G$ where the stabilizers $G_c\cong\Z/n\Z$ of the lifts $c\in C$ of the cone
point form a rotating family. This family will never be very rotating,
but it will be fairly rotating if the pairwise distance between
distinct elements of $C$ is large enough, given by a function of
$n$. 

Note that in Theorem
\ref{thm:cmm_intro}, the constant $L(\cP)$ really depends only on the
projection constant $\theta$ and can be taken to be a fixed
multiple of $\theta$ (e.g. $1000\theta$ will do). In Theorem
\ref{thm:intro_build_complex} the projection constant $\theta$ in (1) can be
taken to be a fixed multiple of $\delta$, and $L(\rho)$ in (2) will be
an exponential function in $\rho$. Since exponential functions grow
faster than linear functions, the spinning constant $L$ in Theorem
\ref{thm:intro_build_complex} will beat the one in Theorem
\ref{thm:cmm_intro} if $\rho$ is big enough, so Theorem \ref{DGO} will
follow (see Section \ref{sec:DGO_proof}).


We now say a few words about our proof of Theorem
\ref{thm:cmm_intro}. As in~\cite{claymangahasmargalit}, we recursively
define a sequence of windmills which correspond to certain orbits of
larger and larger collections of the vertex subgroups $\{G_{c}\}$. At
each stage we prove that the these windmills have a tree-like
structure (technically, the skeleton of the canonical cover of each
windmill is a tree).  At each step we obtain a new group that is the
free product of the previous group with a suitable collection of
$G_c$'s, and taking the limit proves the theorem, see Section
\ref{subsec:cmmproof}. Canoeing enters when we verify that windmills
have a tree-like structure. The simplest example of a canoeing path in
$\cP$ would be an edge-path passing through vertices
$v_1,v_2,\cdots,v_k$ such that for every $i=2,3,\cdots,k-1$ the
``angle'' $d_{v_i}(v_{i-1},v_{i+1})$ is large. The basic properties of
projection complexes quickly imply that such paths are embedded, and
they provide a local-to-global principle enabling us to establish the
tree-like structure.

We end this introduction with some applications of Theorems \ref{DGO}
and \ref{thm:cmm_intro}. Suppose $G$ acts by
isometries on a hyperbolic space $Y$ and $g\in G$ is
loxodromic. Suppose also that $g$ is a ``WPD element'' as per Bestvina-Fujiwara
\cite{BF2002}. This amounts to saying that $g$ is contained in a
unique maximal virtually cyclic subgroup $EC(g)$ (the elementary closure of
$\la g\ra$) and further that the set of $G$-translates of a fixed
$EC(g)$-orbit is ``geometrically separated'', i.e., the nearest point
projections satisfy the projection axioms. This situation generalizes
the example above, where $Y=\mathbb H^2$, $G$ is the deck group of the
universal cover $Y\to S$, and $g$ corresponds to an indivisible element
in $G$, so $EC(g)=\la g\ra$. This situation is fairly common. For example,
$Y$ could be the curve complex of a surface of finite type, $G$ its
mapping class group, and $g\in G$ a pseudo-Anosov mapping class (see
\cite{BF2002}). For another example, take $G$ to be the Cremona group
(of birational transformations of $\mathbb C\mathbb P^2$) acting on infinite dimensional real hyperbolic space, see
\cite{CL2013}. In these situations one can construct a space $X$ by
coning off the orbit of $EC(g)$ and each translate. If the radius of
the cone is large enough, Dahmani--Guirardel--Osin show that $X$ is
hyperbolic and if $N\unlhd EC(g)$ is a sufficiently deep finite index
normal subgroup, then the set of $G$-conjugates of $N$ forms a very
rotating family with the cone points as apices. In particular, they
resolved a long-standing open problem by showing that if $g$ is a
pseudo-Anosov homeomorphism of a finite-type surface, then there is
$n>0$ such that the normal closure of $g^n$ in the mapping class group
is the free group $F_\infty$ (of infinite rank), and all nontrivial
elements of the group are pseudo-Anosov. 

Clay--Mangahas--Margalit reproved this application to mapping class groups directly from
Theorem \ref{thm:cmm_intro} and gave new applications of their
theorem. To illustrate, consider a mapping class $g$ on a finite-type
surface $S$ which is supported on a proper, connected,
$\pi_1$-injective subsurface $A\subset S$ such that $g|A$ is a
pseudo-Anosov homeomorphism of $A$. Assume also that any two
subsurfaces 
in the orbit of $A$ either coincide or intersect. There is a natural
projection complex one can construct from this setup. The vertices are the
subsurfaces in the mapping class group orbit of $A$, and the
projection $\pi_A(B)$ is the Masur-Minsky subsurface projection
\cite{MM1999,MM2000}. It follows from the work of Masur-Minsky and Behrstock
that the projection axioms hold in this setting. Clay--Mangahas--Margalit
prove that for a suitable $n>0$ the collection of conjugates of
$\la g^n\ra$ forms a spinning family and conclude that, here too, the
normal closure of $g^n$ is free. They consider more general
situations where the normal closure can be a non-free group as well. Remarkably they can exactly determine the normal closure even in this case. For example, if $S$
is a closed surface of even genus and $g$ is pseudo-Anosov supported
on exatly half the surface, then for suitable $n>0$ the normal closure
of $g^n$ is the infinite free product of copies of $F_\infty\times F_\infty$.

\subsection*{Outline} 

Preliminaries are given in Section~\ref{sec:prelims}. In Section~\ref{sec:build_proj} we construct a group action on a projection complex from the rotating family assumptions of Dahmani--Guirardel--Osin. Section~\ref{sec:canoe} contains the new proof of the result of Clay--Mangahas--Margalit via canoeing paths in a projection complex. In Section~\ref{sec:DGO_proof} we give the new proof of the result of Dahmani--Guirardel--Osin using projection complexes. Section 6 contains proofs of the moreover statements of the Dahmani--Guirardel--Osin and Clay--Mangahas--Margalit theorems. That is, we prove that elements of the corresponding groups act either loxodromically or are contained in one of the given rotating/spinning subgroups.

\subsection*{Acknowledgments} MB was partially supported by NSF DMS-1905720. GD was partially supported by NSF DMS-1607236, NSF DMS-1840190, and NSF DMS-1246989. PP was partially supported on NSF DMS-1937969. ES was partially supported by NSF DMS-1840190. We also thank the referee for numerous helpful comments.

\section{Preliminaries} \label{sec:prelims}

In this section, we state the relevant result of Dahmani--Guirardel--Osin, give background on projection complexes, state the result of Clay--Mangahas--Margalit, and give the necessary background on $\delta$-hyperbolic spaces, in that order. 

\subsection{Rotating subgroups and the result of Dahmani--Guirardel--Osin}

\begin{defn}[{\cite[Definition 2.12]{DGO}}] \label{def:very_rot} (Gromov's rotating families.) Let $G$ be a group acting by isometries on a metric space~$X$. A {\it rotating family} $\cC = (C, \{G_c \, | \, c \in C \})$ consists of a subset $C \subset X$ and a collection $\{G_c \, | \, c \in C\}$ of subgroups of $G$ such that the following conditions hold.
  \begin{enumerate}
    \item[(a-1)] The subset $C$ is $G$-invariant;
    \item[(a-2)] each group $G_c$ fixes $c$;
    \item[(a-3)] $G_{gc} = gG_cg^{-1}$ for all $g \in G$ and for all $c \in C$.
  \end{enumerate}
  The elements of the set $C$ is called the {\it apices} of the family, and the groups $G_c$ are called the {\it rotation subgroups} of the family.
  \begin{enumerate}
    \item[(b)] (Separation.) The subset $C$ is {\it $\rho$-separated} if any two distinct apices are at distance at least~$\rho$.
    \item[(c)] (Very rotating condition.) When $X$ is
          $\delta$-hyperbolic with $\delta>0$, one says that $\cC$ is {\it
          very rotating} if for all $c \in C$, all $g \in G_c - \{1\}$,
          and all $x,y \in X$ with both $d(x,c)$ and $d(y,c)$ in the
          interval $[20\delta, 40\delta]$ and $d(gx, y) \leq 15\delta$,
          then any geodesic from $x$ to $y$ contains $c$.
  \end{enumerate}   
  We will actually make use of a weaker version of the very rotating condition.
  \begin{enumerate}
    \item[(c$'$)] (Fairly rotating condition.) When $X$ is
          $\delta$-hyperbolic with $\delta>0$, one says that $\cC$ is
          \emph{fairly rotating} if for all $c \in C$, all $g \in G_{c}
          -\{1\}$, and all $x \in C$ with $x\neq c$, there
          exists a geodesic from $x$ to $gx$ that nontrivially intersects
          the ball of radius $1$ around $c$.
  \end{enumerate}
\end{defn}

\begin{remark}
  Property (c) implies Property (c$'$) by \cite[Lemma 5.5]{DGO}.
\end{remark}

\begin{example}[{\cite[Example 2.13]{DGO}}]
  Let $G = H*K$, and let $X$ be the Bass-Serre tree for this free product decomposition. Let $C \subset X$ be the set of vertices, and let $G_c$ be the stabilizer of $c \in C$. Then, $\cC = (C, \{G_c \, | \, c \in C\})$ is a $1$-separated very rotating family. 
\end{example}

Dahmani--Guirardel--Osin~\cite{DGO} prove a partial converse to the example above as follows.

\begin{thm}[{\cite[Theorem 5.3a]{DGO}}] \label{thm:DGO}
  Let $G$ be a group acting by isometries on a $\delta$-hyperbolic geodesic metric space, and let $\cC = (C, \{G_c \, | \, c \in C\})$ be a $\rho$-separated very rotating family for some $\rho \geq 200 \delta$. Then, the normal closure in $G$ of the set $\{G_c\}_{c \in C}$ is isomorphic to a free product $\Asterisk_{c \in C'} G_c$, for some (usually infinite) subset $C' \subset C$. 
\end{thm}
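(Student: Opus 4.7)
The plan is to derive Theorem~\ref{thm:DGO} as a direct consequence of the two main theorems already in hand: Theorem~\ref{thm:intro_build_complex} (which converts a very rotating family on a hyperbolic space into a spinning family on a projection complex) and Theorem~\ref{thm:cmm_intro} (Clay--Mangahas--Margalit). The strategy is exactly the one advertised in the introduction: build the projection complex, check that the spinning constant beats the Clay--Mangahas--Margalit threshold, and read off the free product decomposition.

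First I would observe that the normal closure in $G$ of $\bigcup_{c\in C}G_c$ coincides with the subgroup $H:=\langle G_c\mid c\in C\rangle$. Indeed, by axioms (a-1) and (a-3) of a rotating family, for any $g\in G$ and $c\in C$ we have $gG_cg^{-1}=G_{g(c)}$ with $g(c)\in C$, so $H$ is already normal in $G$. Thus it suffices to prove that $H\cong \Asterisk_{c\in C'}G_c$ for some $C'\subseteq C$.

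Next I would apply Theorem~\ref{thm:intro_build_complex}. Since $\rho\geq 200\delta\geq 20\delta$, part (1) produces a projection complex $\cP$ with vertex set $C$ on which $G$ acts by isometries preserving projection data of projection constant $\theta=\theta(\delta)$ (a fixed multiple of $\delta$). Because $\cC$ is very rotating, part (2) says $\{G_c\}_{c\in C}$ is an $L(\rho)$-spinning family for this action, where by part (3)
\[
L(\rho)\;\geq\; 2^{\frac{\rho-6\delta-4}{2\delta}}-4-248\delta.
\]
Normalizing so that $\delta\geq 1$, this quantity grows exponentially in $\rho$, while the Clay--Mangahas--Margalit threshold $L(\cP)$ from Theorem~\ref{thm:cmm_intro} depends only on $\theta$ and is a fixed linear multiple of it (e.g.\ $1000\theta$, as noted after the statement of Theorem~\ref{thm:intro_build_complex}). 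Since an exponential eventually dominates a linear function, the hypothesis $\rho\geq 200\delta$ is ample to guarantee $L(\rho)\geq L(\cP)$.

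With the spinning hypothesis of Theorem~\ref{thm:cmm_intro} verified, I would apply that theorem directly to the action of $G$ on $\cP$ with spinning family $\{G_c\}_{c\in V\cP=C}$. The conclusion is precisely that $H=\langle G_c\mid c\in C\rangle$ is isomorphic to a free product $\Asterisk_{c\in C'}G_c$ for some subset $C'\subseteq C$, which combined with the first paragraph gives the theorem. The only real content to verify is the numerical comparison between the exponential lower bound on $L(\rho)$ and the linear Clay--Mangahas--Margalit threshold in $\theta(\delta)$; this is the step most likely to require care, but it is a routine estimate once both constants are written down explicitly from Sections~\ref{sec:build_proj} and~\ref{sec:canoe}, and is the only place the specific constant $200\delta$ enters.
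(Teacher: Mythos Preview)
Your overall strategy is exactly the paper's: build the projection complex via Theorem~\ref{thm:intro_build_complex} (carried out in detail as Theorem~\ref{thm:proj_com_from_DGO}), check that the resulting spinning constant exceeds the threshold of Theorem~\ref{thm:new_CMM}, and conclude. Your remark that the normal closure equals $\langle G_c\mid c\in C\rangle$ by equivariance is also how the paper (implicitly) passes between the two formulations.

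There is, however, a genuine gap in the step you call a ``routine estimate.'' The spinning bound from Theorem~\ref{thm:intro_build_complex}(3) is $L\approx 2^{(\rho-6\delta-4)/(2\delta)}$, while the threshold $4M+K$ from Theorem~\ref{thm:new_CMM} is a fixed multiple of~$\delta$. Plugging in $\rho=200\delta$ gives $L\approx 2^{97}$, a \emph{constant}, whereas the threshold $\approx 13000\,\delta$ is unbounded in~$\delta$; so your ``exponential beats linear'' sentence does not apply as written, and the inequality fails for astronomically large~$\delta$. The paper confronts this honestly: in Theorem~\ref{thm:new_proof_DGO} it chooses $R=\delta\log_2\delta+16\delta$, obtains the hypothesis $\rho\geq 2\delta\log_2\delta+38\delta$, and explicitly states that it recovers the DGO theorem ``with different constants'' rather than the literal bound $200\delta$. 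If you want the exact constant $200\delta$, you must first rescale the metric so that $\delta=1$ (the very rotating condition is scale-invariant, and the separation becomes $\rho\geq 200$), after which the comparison is between fixed numbers; otherwise you should state the conclusion with the paper's constant.
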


\subsection{Projection complexes}

Bestvina--Bromberg--Fujiwara~\cite{bestvinabrombergfujiwara} defined projection complexes via a set of projection axioms given as follows.

\begin{defn}[{\cite[Sections 1 \& 3.1]{bestvinabrombergfujiwara}, Projection axioms}] \label{def:proj_axioms}
  Let $\cY$ be a set of metric spaces (in which infinite distances are allowed), and for each $Y \in \cY$, let \[\pi_Y: \bigl(\cY - \{Y\}\bigr)  \longrightarrow  2^{Y} \] satisfy the following axioms for a {\it projection constant} $\theta \geq 0$, where we set $d_{Y}(X,Z) = \diam(\pi_{Y}(X)\cup\pi_{Y}(Z))$ for any $X,Z \in \cY - \{Y\}$.
  \begin{enumerate}
    \item[(P1)] $\diam(\pi_{Y}(X)) \leq \theta$ for all $X \neq Y$,
    \item[(P2)] (the Behrstock inequality) if $d_Y(X,Z) > \theta$, then $d_X(Y,Z) \leq \theta$, and 
    \item[(P3)] for any $X, Z$ the set $\{Y \in \cY - \{X,Z\}\, |\, d_Y(X,Z) > \theta\}$ is finite.
  \end{enumerate}
  We then say that the collection $(\cY,\{\pi_{Y}\})$ satisfies the {\it projection axioms}. We call the set of functions $\{d_{Y}\}$ the {\it projection distances}.
  
  If Axiom~(P2) is replaced with 
  \begin{enumerate}
    \item[(P2+)] if $d_{X}(Y,Z)>\theta \Rightarrow
          d_{Y}(Z,W)=d_{Y}(X,W)$ for all $X, Y, Z, W$
          distinct,\footnote{One can replace this with an even stronger
          axiom that $d_X(Y,Z)>\theta$ implies $\pi_Y(X)=\pi_Y(Z)$.}
  \end{enumerate}
  then we say that the collection $(\cY,\{\pi_{Y}\})$ satisfies the {\it strong projection axioms}.
\end{defn}

Bestvina--Bromberg--Fujiwara--Sisto~\cite{bestvinabrombergfujiwarasisto} proved that one can upgrade a collection satisfying the projection axioms to a collection satisfying the strong projection axioms as follows.

\begin{thm}[{\cite[Theorem 4.1]{bestvinabrombergfujiwarasisto}}]  \label{thm:projupgrade}
  Assume that $(\cY,\{\pi_{Y}\})$ satisfies the projection axioms with projection constant $\theta$. Then, there are $\{\pi_{Y}'\}$ satisfying the strong projection axioms with projection constant $\theta' = 11\theta$ and such that $d_{Y}-2\theta \leq d_{Y}' \leq d_{Y} + 2\theta$, where $\{d_{Y}\}$ and $\{d_{Y}'\}$ are the projection distances coming from $\{\pi_{Y}\}$ and $\{\pi_{Y}'\}$, respectively. 
\end{thm}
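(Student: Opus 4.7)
The goal is to modify each projection $\pi_Y(X)$ by absorbing the projections of all $Z$'s that are close to $X$ from the vantage point of $Y$, so as to force the equality demanded by (P2+). Concretely, for each $Y\in\cY$ I would introduce a relation $\sim_Y$ on $\cY\setminus\{Y\}$ defined by a thresholded small-$d_Y$ condition such as $d_Y(X,Z)\leq K\theta$ for a carefully chosen constant $K$, and then define
\[
\pi_Y'(X) \;=\; \bigcup_{Z\,\sim_Y\, X} \pi_Y(Z),
\]
possibly after replacing $\sim_Y$ by a controlled transitive closure. The intuition is that when $X$ and $Z$ already project to nearby sets in $Y$ under the original axioms, the strong axiom is asking us to treat those projections as literally the same; the thickening formalizes this.

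To verify (P1) for $\pi'$, I would bound the $Y$-diameter of each $\sim_Y$-class: each $\pi_Y(Z)$ in the union has diameter at most $\theta$ by the original (P1), and the relation forces the constituent projections to lie within a controlled distance of $\pi_Y(X)$. Axiom (P3) for $\pi'$ follows from the original (P3), because any $Y$ contributing a new large $d_Y'$ already contributes a large $d_Y$. For (P2+), suppose $d_X(Y,Z)>11\theta$. The original Behrstock inequality (P2) yields $d_Y(X,Z)\leq\theta$, which places $X$ and $Z$ in the same $\sim_Y$-class, so by construction $\pi_Y'(X)=\pi_Y'(Z)$ as sets, and hence $d_Y'(Z,W)=d_Y'(X,W)$ for every other $W$. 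Finally, the two-sided estimate $d_Y-2\theta\leq d_Y'\leq d_Y+2\theta$ drops out of the construction since $\pi_Y'(X)\supseteq\pi_Y(X)$ and the thickening enlarges the projection by at most a $\theta$-neighborhood on each side.

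The main obstacle I anticipate is the calibration of $\sim_Y$: the relation must be coarse enough to identify every pair $(X,Z)$ with $d_X(Y,Z)>11\theta$, yet fine enough that its classes have $Y$-diameter uniformly bounded by the new projection constant $11\theta$. A naive transitive closure of ``$d_Y(X,Z)\leq K\theta$'' could produce arbitrarily long chains and hence unbounded diameter in $Y$, breaking (P1). Reconciling these competing demands is the technical heart of the argument and likely requires a Behrstock-based rigidity lemma showing that any chain of $d_Y$-close spaces cannot meander in $Y$, together with careful bookkeeping to show that the ultimate constant can be taken to be exactly $11\theta$.
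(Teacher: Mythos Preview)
The paper does not prove this theorem; it is quoted as a black box from \cite{bestvinabrombergfujiwarasisto} and used without argument. There is therefore no ``paper's own proof'' to compare your attempt against.

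That said, a remark on your sketch is in order. Your instinct to thicken $\pi_Y(X)$ by absorbing nearby projections is correct in spirit and is indeed what underlies the construction in \cite{bestvinabrombergfujiwarasisto}. However, the proposal as written is not a proof: you yourself identify the real difficulty, namely that a naive transitive closure of the relation ``$d_Y(X,Z)\le K\theta$'' can produce chains of unbounded $Y$-diameter, destroying (P1). You then defer this to an unspecified ``Behrstock-based rigidity lemma.'' That deferred step is precisely the content of the theorem; without it you have only restated the problem. In the actual argument of \cite{bestvinabrombergfujiwarasisto} the thickening is not done via a transitive closure at all but via a more delicate definition that exploits the total order on $\cY_K(X,Z)$ and the monotonicity it induces, which is what keeps the diameters controlled and pins down the constant $11\theta$. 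Your outline does not yet contain that mechanism.
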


\begin{defn}[Projection complex] \label{def:proj_complex}
  Let $\cY$ be a set that satisfies the strong projection axioms with respect to a constant $\theta \geq 0$. Let $K \in \N$. The {\it projection complex} $\cP = \cP(\cY, \theta, K)$ is a graph with vertex set~$V\cP$ in one-to-one correspondence with elements of $\cY$. Two vertices $X$ and $Z$ are connected by an edge if and only if $d_Y(X,Z) \leq K$ for all $Y \in \cY$.
\end{defn}

Throughout this paper, given a collection satisfying the projections axioms we will always apply Theorem \ref{thm:projupgrade} to upgrade our collection to satisfy the strong projection axioms, unless specified otherwise. We first prove that the projection axioms hold for $\theta$ and then upgrade, but still label the projection constant $\theta$ instead of $\theta'$ by a slight abuse of notation. We will also assume that $K \geq 3\theta$ for the upgraded $\theta$. 

For the rest of this section, we will follow \cite{bestvinabrombergfujiwarasisto}. We refer to Sections 2 and 3 of \cite{bestvinabrombergfujiwarasisto} for any proofs that are omitted in the following.
One virtue of strong projection axioms is that it provides a useful object, called a \textit{standard path}, for studying the geometry of projection complexes. To define it,
for any $X,Z \in \cY$ we consider the set $\cY_{K}(X,Z)$ defined as
\[
  \cY_{K}(X,Z):= \{Y \in \cY - \{X,Z\}| d_{Y}(X,Z)>K\}.
\]
This set $\cY_{K}(X,Z)$ is finite by (P3).
The elements of $\{X\} \cup \cY_{K}(X,Z) \cup \{Z\}$ can be totally ordered
in a natural way, so that each pair of adjacent spaces is connected by an edge in the projection complex $\cP=\cP(\cY,\theta,K)$. The set $\{X\} \cup \cY_K(X,Z) \cup \{Z\}$
is a path between $X$ and $Z$, which we define as the \textit{standard path} between $X$ and $Z$. In particular, this implies that the projection complex $\cP$ is connected.

We can also concatenate two standard paths to make another standard path, as long as the `angle' between the two standard paths is large enough.
\begin{lemma}[Concatenation]
  \label{lem:concatenation}
  If $d_{Y}(X,Z)>K$, then the concatenation of $\cY_{K}(X,Y)$ followed by $\cY_{K}(Y,Z)$ is the standard path $\cY_{K}(X,Z)$.
\end{lemma}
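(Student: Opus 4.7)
The plan is to establish the set-level equality
\[
\{X\} \cup \cY_K(X,Y) \cup \{Y\} \cup \cY_K(Y,Z) \cup \{Z\} \;=\; \{X\} \cup \cY_K(X,Z) \cup \{Z\},
\]
with the left-hand decomposition disjoint, and then to check that the linear orderings coming from the standard path definitions agree. The only tool needed is the strong axiom (P2+), applied repeatedly with $Y$ as the ``hub.''

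For the inclusion $\cY_K(X,Y) \subset \cY_K(X,Z)$, take $W \in \cY_K(X,Y)$, so $d_W(X,Y) > K \ge 3\theta$. The Behrstock inequality gives $d_Y(W,X) \le \theta$, and combined with the hypothesis $d_Y(X,Z) > K$ the triangle inequality for $d_Y$ yields $d_Y(W,Z) > K - \theta > \theta$. Now (P2+), applied with hypothesis $d_Y(W,Z) > \theta$ and fourth vertex $X$, produces $d_W(X,Y) = d_W(X,Z)$, so $d_W(X,Z) > K$ and $W \in \cY_K(X,Z)$. A symmetric argument handles $\cY_K(Y,Z) \subset \cY_K(X,Z)$, and $Y \in \cY_K(X,Z)$ is the given hypothesis. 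For the reverse inclusion, fix $W \in \cY_K(X,Z)$ with $W \neq Y$. Since $d_Y(X,Z) > K > 2\theta$, the triangle inequality on $d_Y$ forces $d_Y(W,X) > \theta$ or $d_Y(W,Z) > \theta$; in the first case (P2+) with fourth vertex $Z$ converts $d_W(X,Z)>K$ into $d_W(Y,Z)>K$, and in the second case (P2+) with fourth vertex $X$ converts it into $d_W(X,Y)>K$, placing $W$ in $\cY_K(Y,Z)$ or $\cY_K(X,Y)$ respectively. Disjointness is automatic: if $W$ lay in $\cY_K(X,Y)\cap\cY_K(Y,Z)$, two applications of Behrstock would yield $d_Y(W,X)\le\theta$ and $d_Y(W,Z)\le\theta$, contradicting $d_Y(X,Z) > K$.

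The step I expect to require the most care is matching the two linear orderings: one must verify that $Y$ separates $\cY_K(X,Y)$ from $\cY_K(Y,Z)$ inside the order on $\cY_K(X,Z)$, and that the induced suborder on each half coincides with its intrinsic standard-path order. This follows from the same identities produced above; for instance, for $W \in \cY_K(X,Y)$ the equality $d_W(X,Y) = d_W(X,Z)$ means $Y$ and $Z$ are indistinguishable from $W$'s viewpoint, so $W$ lies on the $X$-side of $Y$ in the long standard path, and dually for $W \in \cY_K(Y,Z)$. Combined with the fact that each standard path is already an edge-path in $\cP$, this promotes the set equality to the concatenation identity claimed in the lemma.
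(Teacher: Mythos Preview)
Your argument is correct and in fact more complete than the paper's. The paper only proves the forward inclusion $\cY_K(X,Y)\cup\{Y\}\cup\cY_K(Y,Z)\subset\cY_K(X,Z)$ and then declares the proof finished, leaving the reverse inclusion, disjointness, and compatibility of the two orderings to the reader (implicitly leaning on the background reference to \cite{bestvinabrombergfujiwarasisto}). For the forward direction the paper's route is marginally slicker than yours: given $X'\in\cY_K(X,Y)$, it feeds $d_{X'}(X,Y)>K>\theta$ directly into (P2+) to obtain $d_Y(X',Z)=d_Y(X,Z)>K$, and then applies (P2+) a second time to get $d_{X'}(Z,X)=d_{X'}(Y,X)>K$; this bypasses your intermediate use of Behrstock plus the triangle inequality, though the effect is the same. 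Your treatment of the reverse inclusion, disjointness, and the order-matching is a genuine addition and fills in what the paper leaves tacit.
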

\begin{proof}
  Suppose $d_{Y}(X,Z)>K$. Then by definition $Y\in \cY_{K}(X,Z)$. Let $X'$ be any vertex in the standard path $\cY_{K}(X,Y)$. Then $d_{X'}(X,Y)>K$, so $d_{Y}(X',Z)=d_{Y}(X,Z)>K$, which further implies $d_{X'}(Z,X)=d_{X'}(Y,X)>K$, so $X' \in \cY_{K}(X,Z)$. Similarly, for any vertex $Y'$ in $\cY_{K}(Y,Z)$, we can show that $Y' \in \cY_{K}(X,Z)$, concluding the proof.
\end{proof}

The following lemma says that triangles whose sides are standard paths are nearly tripods.

\begin{lemma}[Standard triangles are nearly-tripods, \cite{bestvinabrombergfujiwarasisto} Lemma 3.6]
  \label{lemma:tripod}
  For every $X,Y,Z \in \cY$, the path $\cY_{K}(X,Z)$ is contained
  in $\cY_{K}(X,Y) \cup \cY_{K}(Y,Z)$ except for at most two vertices. Moreover, in case that there are two such vertices, they are consecutive.
\end{lemma}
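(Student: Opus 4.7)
The plan is to compare the three standard paths via the Concatenation Lemma and axiom (P2+), splitting on whether $Y$ already lies on $\cY_K(X,Z)$. If $d_Y(X,Z)>K$, then $Y\in\cY_K(X,Z)$ and the Concatenation Lemma immediately gives $\cY_K(X,Z)=\cY_K(X,Y)\cup\{Y\}\cup\cY_K(Y,Z)$; the only vertex of $\cY_K(X,Z)$ missing from $\cY_K(X,Y)\cup\cY_K(Y,Z)$ is $Y$ itself, so the statement holds with a single exception.

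For the main case $d_Y(X,Z)\leq K$, call $V\in\cY_K(X,Z)$ an \emph{exception} if $V\notin\cY_K(X,Y)\cup\cY_K(Y,Z)$, i.e.\ if $d_V(X,Y)\leq K$ and $d_V(Y,Z)\leq K$. I would first show the exceptions form a contiguous block in the natural order on $\cY_K(X,Z)$. Monotonicity follows directly from the Concatenation Lemma: if $V_2\in\cY_K(X,Z)\cap\cY_K(X,Y)$, decomposing both standard paths via $V_2$ gives $\cY_K(X,V_2)\subseteq\cY_K(X,Y)$, so every vertex of $\cY_K(X,Z)$ preceding $V_2$ is non-exceptional. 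The symmetric argument applies to $\cY_K(X,Z)\cap\cY_K(Y,Z)$, so exceptions form an interval sandwiched between the last vertex lying in $\cY_K(X,Y)$ and the first lying in $\cY_K(Y,Z)$.

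To bound this interval by two, I would first deduce that every exception $V$ satisfies $d_X(V,Y)\leq\theta$ and $d_Z(V,Y)\leq\theta$: were $d_X(V,Y)>\theta$, axiom (P2+) applied to the distinct tuple $(X,V,Y,Z)$ would yield $d_V(Y,Z)=d_V(X,Z)>K$, contradicting exceptionality; the second bound follows symmetrically with $(Z,V,Y,X)$. Supposing three consecutive exceptions $V_1<V_2<V_3$, the ordering provides $d_{V_2}(V_1,V_3)>K$ and $d_{V_1}(X,V_2)>K$. Chaining (P2+) equalities across these large projections, together with the Behrstock inequality comparing $Y$ to each $V_i$ and the side bounds $d_X(V_i,Y),d_Z(V_i,Y)\leq\theta$, I would force $d_{V_2}(Y,V_1)$ or $d_{V_2}(Y,V_3)$ to exceed $K$; a final application of (P2+) through $V_1\in\cY_K(X,V_2)$ or $V_3\in\cY_K(V_2,Z)$ then transfers this to $d_{V_2}(Y,X)>K$ or $d_{V_2}(Y,Z)>K$, contradicting exceptionality of $V_2$. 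Since any two exceptions in the interval are consecutive by monotonicity, this also proves the ``consecutive'' clause.

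The main obstacle I anticipate is this final contradiction: the naive triangle inequality gives only $d_{V_2}(X,Z)\leq d_{V_2}(X,Y)+d_{V_2}(Y,Z)\leq 2K$, which is compatible with $d_{V_2}(X,Z)>K$, so the ``at most two'' bound cannot be extracted from a single inequality. It must come from the combined tripod-like rigidity of the three standard paths near their meeting region, with the side bounds $d_X(V,Y),d_Z(V,Y)\leq\theta$ chained across neighboring exceptions via (P2+) to squeeze the contradiction out of the large projection $d_{V_2}(V_1,V_3)>K$ between flanking exceptions.
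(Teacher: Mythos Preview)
First, note that the paper does not give its own proof of this lemma: it is quoted from \cite{bestvinabrombergfujiwarasisto} with the explicit disclaimer that proofs omitted in that subsection are to be found there. So there is no in-paper argument to compare against; I can only assess your proposal on its own.

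Your treatment of the case $d_Y(X,Z)>K$ via the Concatenation Lemma is correct, as is your monotonicity argument showing that the exceptions form a contiguous block of $\cY_K(X,Z)$. Your derivation that every exception $V$ satisfies $d_X(V,Y)\leq\theta$ and $d_Z(V,Y)\leq\theta$ is also fine.

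The genuine gap is exactly where you flag it, and your proposed route to the contradiction does not work. With three consecutive exceptions $V_1<V_2<V_3$, the ordering gives $d_{V_1}(X,V_2)>K>\theta$ and $d_{V_3}(V_2,Z)>K>\theta$; applying (P2+) (with the roles $X\to V_1$, $Y\to V_2$, $Z\to X$, and symmetrically) yields
\[
d_{V_2}(V_1,W)=d_{V_2}(X,W)\quad\text{and}\quad d_{V_2}(V_3,W)=d_{V_2}(Z,W)\qquad\text{for all }W.
\]
In particular $d_{V_2}(Y,V_1)=d_{V_2}(Y,X)\leq K$ and $d_{V_2}(Y,V_3)=d_{V_2}(Y,Z)\leq K$, both precisely because $V_2$ is an exception. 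So neither of the two quantities you hope to push above $K$ can exceed $K$: the chain of (P2+) equalities collapses them back onto the very inequalities defining exceptionality. The side bounds $d_X(V_i,Y),d_Z(V_i,Y)\leq\theta$ do not rescue this, since they are compatible with $K<d_{V_2}(X,Z)\leq 2K$. A different mechanism is needed to extract the ``at most two'' bound; the argument in \cite{bestvinabrombergfujiwarasisto} does not proceed by forcing a large angle at the middle exception.
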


This lemma is used to show that standard paths also form quasi-geodesics in the projection complex. 

\begin{lemma}[Standard paths are quasi-geodesics, \cite{bestvinabrombergfujiwarasisto} Corollary 3.7] \label{lem:spqg} 
  Let $X \neq Z$ and let $n = |\cY_{K}(X,Z)| + 1$. Then $\left\lfloor \frac{n}{2} \right\rfloor + 1 \leq d_{\cP}(X,Z) \leq n$. 
\end{lemma}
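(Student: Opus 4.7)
The plan is to prove the two inequalities separately. The upper bound $d_{\cP}(X,Z)\le n$ is essentially built into the definition: the set $\{X\}\cup\cY_K(X,Z)\cup\{Z\}$ is, as noted just before Lemma \ref{lem:concatenation}, the vertex set of a path in $\cP$ from $X$ to $Z$ in which consecutive vertices are edge-adjacent, and this path has exactly $n=|\cY_K(X,Z)|+1$ edges. So I would dispose of the upper bound in a single sentence.

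For the lower bound I will establish the estimate
\[
  |\cY_K(X,Z)|\;\le\;2\bigl(d_{\cP}(X,Z)-1\bigr),
\]
from which $n-1\le 2(d_{\cP}(X,Z)-1)$, and hence $d_{\cP}(X,Z)\ge \tfrac{n+1}{2}$; since $d_{\cP}(X,Z)$ is an integer, this forces $d_{\cP}(X,Z)\ge \lceil (n+1)/2\rceil=\lfloor n/2\rfloor+1$, which is exactly what we want.

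To prove the estimate, let $X=X_0,X_1,\ldots,X_m=Z$ be a geodesic edge-path in $\cP$, so $m=d_{\cP}(X,Z)$. The key observation is that if $X_i$ and $X_{i+1}$ are connected by an edge, then by the definition of $\cP$ one has $d_Y(X_i,X_{i+1})\le K$ for every $Y\in\cY$, so $\cY_K(X_i,X_{i+1})=\emptyset$. I will then apply the near-tripod Lemma \ref{lemma:tripod} iteratively: for each $i$, taking the triangle with vertices $X_0$, $X_i$, $X_m$ and pivoting at $X_{i+1}$, the lemma gives
\[
  |\cY_K(X_0,X_m)|\;\le\;|\cY_K(X_0,X_{i+1})|+|\cY_K(X_{i+1},X_m)|+2.
\]
Peeling off one edge at a time from the left (using $\cY_K(X_i,X_{i+1})=\emptyset$ at each step) and summing, I get a telescoping bound that yields $|\cY_K(X_0,X_m)|\le 2(m-1)$, as required. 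A clean way to present this is by induction on $m$: the base case $m=1$ gives $|\cY_K(X,Z)|=0$, and the inductive step applies Lemma \ref{lemma:tripod} with pivot $X_1$, giving $|\cY_K(X_0,X_m)|\le 0+|\cY_K(X_1,X_m)|+2\le 2(m-2)+2=2(m-1)$.

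I do not expect any genuine obstacle; the only thing to watch is that $\cY_K(X_i,X_{i+1})=\emptyset$ really does follow from the edge relation (which it does, since the definition of $\cP$ demands $d_Y(X_i,X_{i+1})\le K$ for \emph{all} $Y\in\cY$, including those in $\cY-\{X_i,X_{i+1}\}$), and that the ``at most two extra vertices'' in Lemma \ref{lemma:tripod} are accounted for at every inductive step. The parity book-keeping at the end (passing from $(n+1)/2$ to $\lfloor n/2\rfloor+1$) is the only subtle point, but it is purely arithmetic.
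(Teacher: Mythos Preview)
Your argument is correct. The paper itself does not supply a proof of this lemma; it merely quotes the result as \cite[Corollary 3.7]{bestvinabrombergfujiwarasisto}, so there is nothing to compare against directly. Your approach---the upper bound from the standard path itself, and the lower bound by induction along a geodesic using Lemma~\ref{lemma:tripod} to peel off one edge at a time---is exactly the natural argument and matches the spirit of the cited source. The only cosmetic point is that when you invoke Lemma~\ref{lemma:tripod} with pivot $X_1$, you should note (as you implicitly do) that $X_0,X_1,X_m$ are pairwise distinct for $m\ge 2$; the base case $m=1$ handles the remaining situation.
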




The next lemma will be used to prove bounded geodesic image theorem (Theorem \ref{thm:bgi}).

\begin{lemma}
  \label{lem:dist4}
  Let $X,Z \in \cY$ be adjacent points in a projection complex. If $Y\in \cY$ satisfies $d_{\cP}(Y,X)\ge 4$ and $d_{\cP}(Y,Z)\ge 4$, then $d_{Y}(X,W)=d_{Y}(Z,W)$ for every $W \in \cY -\{Y\}$.
\end{lemma}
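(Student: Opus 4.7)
The plan is to find a common intermediate vertex $V$ on the standard paths $\cY_K(Y,X)$ and $\cY_K(Y,Z)$, and then to relay the desired equality of projection distances through $V$ using the strong projection axiom (P2+).

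First, since $d_\cP(Y,X)\ge 4$ and $d_\cP(Y,X)\le |\cY_K(Y,X)|+1$ by Lemma~\ref{lem:spqg}, we have $|\cY_K(Y,X)|\ge 3$. Next, since $X$ and $Z$ are adjacent in $\cP$, the definition of the projection complex gives $\cY_K(X,Z)=\emptyset$. Applying the tripod Lemma~\ref{lemma:tripod} to the triple $(Y,Z,X)$ yields
\[
\cY_K(Y,X)\subseteq \cY_K(Y,Z)\cup\cY_K(Z,X)=\cY_K(Y,Z)
\]
except for at most two (consecutive) vertices, so $|\cY_K(Y,X)\setminus \cY_K(Y,Z)|\le 2$. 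Combining this with $|\cY_K(Y,X)|\ge 3$, the intersection $\cY_K(Y,X)\cap \cY_K(Y,Z)$ is nonempty; fix any $V$ in it.

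By construction $V\notin\{X,Y,Z\}$, $d_V(Y,X)>K\ge 3\theta>\theta$, and $d_V(Y,Z)>K>\theta$. Applying (P2+) to the quadruple $(V,Y,X,W)$ gives $d_Y(X,W)=d_Y(V,W)$; applying it to $(V,Y,Z,W)$ gives $d_Y(Z,W)=d_Y(V,W)$. Combining these two equalities produces $d_Y(X,W)=d_Y(Z,W)$, as required.

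The main obstacle will be the edge cases $W\in\{V,X,Z\}$, in which (P2+) cannot be invoked directly because it insists on four distinct spaces. For generic $W$ the argument just given is immediate. In the remaining special cases, the cleanest remedy is to appeal to the stronger footnote form of (P2+), which upgrades the two inequalities $d_V(Y,X)>\theta$ and $d_V(Y,Z)>\theta$ to the set equalities $\pi_Y(X)=\pi_Y(V)=\pi_Y(Z)$; the identity of $d_Y(X,W)$ and $d_Y(Z,W)$ then follows tautologically from the definition of the projection distance.
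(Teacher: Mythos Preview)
Your proof is correct and follows essentially the same route as the paper: locate a common vertex on the standard paths $\cY_K(Y,X)$ and $\cY_K(Y,Z)$ via the tripod lemma, then relay the equality $d_Y(X,W)=d_Y(Z,W)$ through that vertex using (P2+). Your write-up is in fact more careful than the paper's, since you spell out why the intersection is nonempty (combining $|\cY_K(Y,X)|\ge 3$ from Lemma~\ref{lem:spqg} with $\cY_K(X,Z)=\emptyset$) and you explicitly handle the degenerate cases $W\in\{V,X,Z\}$ via the set-level form of (P2+), which the paper's proof passes over in silence.
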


\begin{proof}
  By Lemma \ref{lemma:tripod}, $\cY_{K}(X,Y)$ and $\cY_{K}(Y,Z)$ share at least one vertex, call it $Q$. Then by definition we have $d_{Q}(X,Y)>K$ and $d_{Q}(Z,Y)>K$. Using
  (P2+) twice, for arbitrary $W \in \cY - \{Y\}$, we have
  \[
    d_{Y}(X,W) = d_{Y}(Q,W) = d_{Y}(Z,W),
  \]
  as desired.
\end{proof}

Now we prove the bounded geodesic image theorem for projection complexes, used in Section~\ref{sec:canoe}. We include a proof in the case that the collection $(\cY,\{d_{Y}\})$ satisfies the strong projection axioms, as we will make explicit use of the constant obtained. The result holds with a different constant for the standard projection axioms by \cite[Corollary 3.15]{bestvinabrombergfujiwara}.

\begin{thm}[Bounded Geodesic Image Theorem]  \label{thm:bgi}
  If $\cP = \cP(\cY,\theta,K)$ is a projection complex obtained from a collection $(\cY,\{d_{Y}\})$ satisfying the strong projection axioms and $\gamma$ is a geodesic in $\cP$ that is disjoint from a vertex $Y$, then $d_Y(\gamma(0), \gamma(t)) \leq M$ for all $t$, where $M = 8K+2\theta$.
\end{thm}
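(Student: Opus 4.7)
I would parametrize $\gamma$ by its sequence of vertices $v_0, v_1, \ldots, v_n$, all distinct from $Y$, and separate them by their $\cP$-distance to $Y$: call $v_i$ \emph{far} if $d_{\cP}(Y, v_i) \geq 4$ and \emph{near} if $d_{\cP}(Y, v_i) \leq 3$. The idea is to use Lemma~\ref{lem:dist4} to obtain stability of the projection $\pi_Y$ along runs of far vertices, while observing that the near vertices are confined to a bounded-length interval.

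First I would show the near indices lie in a single interval $[a, b]$ with $b - a \leq 6$: if both $v_i$ and $v_j$ are near, the triangle inequality in $\cP$ gives $d_{\cP}(v_i, v_j) \leq d_{\cP}(v_i, Y) + d_{\cP}(Y, v_j) \leq 6$, and this equals $|i - j|$ because $\gamma$ is a geodesic. Outside $[a, b]$ every vertex of $\gamma$ is far. Next, for any consecutive run $v_i, v_{i+1}, \ldots, v_j$ of far vertices, iterating Lemma~\ref{lem:dist4} on adjacent pairs gives $d_Y(v_i, W) = d_Y(v_j, W)$ for every $W \neq Y$; setting $W = v_i$ and applying (P1) yields $d_Y(v_i, v_j) \leq \diam \pi_Y(v_i) \leq \theta$.

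I would then split $\gamma$ at the boundary of the near region. The initial segment $v_0, \ldots, v_{a-1}$ consists of far vertices, so the stability estimate gives $d_Y(v_0, v_{a-1}) \leq \theta$, and symmetrically $d_Y(v_{b+1}, v_n) \leq \theta$. The middle subpath from $v_{a-1}$ to $v_{b+1}$ has $\cP$-length at most $8$; adjacency in $\cP$ means $d_W \leq K$ for every $W$, so each edge contributes at most $K$ to $d_Y$, yielding $d_Y(v_{a-1}, v_{b+1}) \leq 8K$ by the triangle inequality for $d_Y$. A final triangle inequality then gives $d_Y(v_0, v_n) \leq 2\theta + 8K = M$. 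The edge cases where the near region is empty, or touches an endpoint of $\gamma$, are handled by subsets of this same estimate.

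The main conceptual point is the boundedness of the ``near'' region; this uses essentially that $\gamma$ is a geodesic, since otherwise a path could hug $Y$ for arbitrarily many steps and the projection could drift without control. After that, the argument is a direct iteration of Lemma~\ref{lem:dist4} and bookkeeping of the constant, and I do not anticipate any further obstacle.
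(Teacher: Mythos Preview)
Your proposal is correct and follows essentially the same approach as the paper: both arguments split the geodesic into the vertices within $\cP$-distance $3$ of $Y$ and those at distance $\geq 4$, use Lemma~\ref{lem:dist4} on the far segments to get a $\theta$ bound, and bound the near segment by $8K$ via the triangle inequality in $\cP$ together with the edge condition $d_Y \leq K$. The only cosmetic difference is that the paper states the bound for every prefix $d_Y(\gamma(0),\gamma(r))$ directly, whereas you prove it for the full path and would then apply the argument to each subgeodesic; this is immediate.
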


\begin{figure}[h]
  \centering
  \begin{overpic}[scale=.7,tics=5]{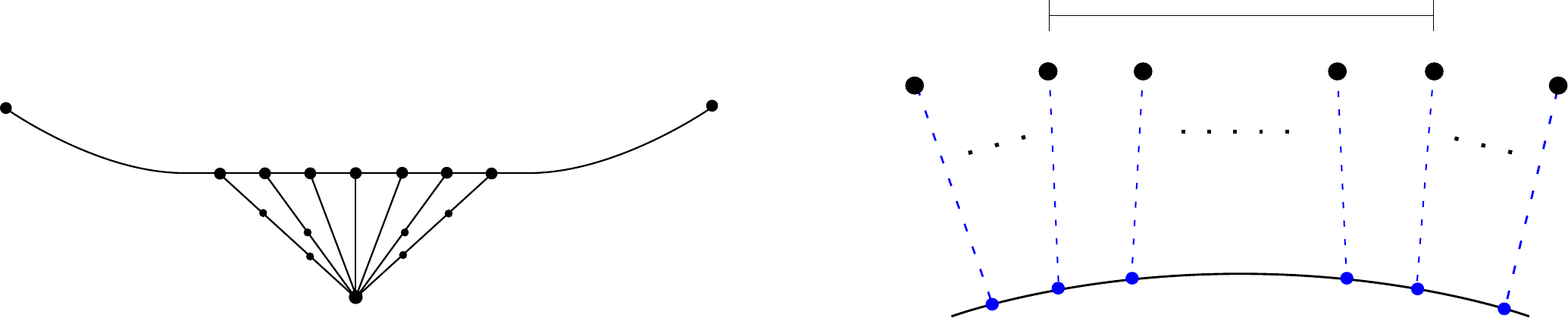}
    \put(3,9){$\gamma$}
    \put(20,0){$Y$}
    \put(13,10.5){\small{$X_i$}}
    \put(30,10.5){\small{$X_j$}}
    \put(58,1){$Y$}
    \put(58,16.5){\small{$X_0$}}
    \put(67,16.5){\small{$X_{i-1}$}}
    \put(73.5,16.5){\small{$X_i$}}
    \put(86,16.5){\small{$X_j$}}
    \put(92,16.5){\small{$X_{j+1}$}}
    \put(98,16.5){\small{$X_n$}}
    \put(77,20){$\le 8$}
    \put(65,-1){\small{$\theta$}}
    \put(69,0){\small{$K$}}
    \put(87,0){\small{$K$}}
    \put(92,-1){\small{$\theta$}}
  \end{overpic}
  \caption{{\small The bound in the Bounded Geodesic Image Theorem is given by considering the configurations above. The geodesic $\gamma$ is shown on the left, and projections onto $Y$ are depicted on the right.}}
  \label{fig:bgit}
\end{figure}

\begin{proof}
  Let $\gamma = \{X_{0},\ldots,X_{n}\}$ be a geodesic in $\cP$ disjoint from a vertex $Y$.
  If $\gamma$ is disjoint from the closed ball of radius 3 about $Y$, then by Lemma~\ref{lem:dist4}:
  \[
    d_{Y}(X_{0},X_{n}) = d_{Y}(X_{1},X_{n}) = \ldots = d_{Y}(X_{n-1},X_{n}) = d_{Y}(X_{n},X_{n}) \underset{(P1)}{\le} \theta.
  \]
  Now assume $\gamma$ intersects the closed ball $B$ of radius 3 about $Y$, and let $X_{i}$ be the first vertex that intersects $B$ and $X_{j}$ be the last one intersects $B$.
  Then $d_{Y}(X_{0},X_{i-1}) \le \theta$ and $d_{Y}(X_{j+1},X_{n}) \le \theta$ as in the first case. Now, by our choice $d_{\cP}(X_{i-1},X_{j+1}) \le d_{\cP}(X_{i-1},Y) + d_{\cP}(Y,X_{j+1}) \le  8$. Also for each $k$ such that $i-1 \le k \le j$, we have $d_{Y}(X_{k},X_{k+1})\le K$ as $X_{k}$ and $X_{k+1}$ are adjacent. Therefore,
  \[
    d_{Y}(X_{0},X_{r}) \le 2\theta + d_{Y}(X_{i-1},X_{j+1}) \le 2\theta + 8K, \text{ for all } r=0, \ldots, n.
  \]
\end{proof}

We will not use the following theorem, but include it here for completeness. An analogous statement for the standard projection axioms was shown in~\cite{bestvinabrombergfujiwara}. The strong projection axiom case along with the specific bound on $K$ recorded here was given by~\cite{bestvinabrombergfujiwarasisto}.

\begin{thm}[{\cite{bestvinabrombergfujiwara,bestvinabrombergfujiwarasisto}}]
  Let $\cY$ be a set that satisfies the strong projection axioms with respect to $\theta \geq 0$. If $K \geq 3\theta$, then the projection complex $\cP(\cY, \theta,K)$ is quasi-isometric to a simplicial tree.  
\end{thm}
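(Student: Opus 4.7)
The plan is to invoke \emph{Manning's bottleneck criterion}, which characterizes when a geodesic metric space is quasi-isometric to a simplicial tree: there must exist a uniform constant $\Delta \geq 0$ such that for every pair of vertices in $\cP$ one can find a (quasi-)midpoint through whose $\Delta$-neighborhood every path between the two vertices must pass. The projection complex is a connected graph, hence a geodesic metric space when edges are assigned length one, so the criterion applies.

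Given $X \neq Z$ in $V\cP$, I would use the standard path to select the candidate midpoint. By Lemma \ref{lem:spqg}, the number of edges $n = |\cY_K(X,Z)|+1$ in the standard path $\{X\}\cup\cY_K(X,Z)\cup\{Z\}$ is within a factor of two of $d_\cP(X,Z)$. Choose $Y \in \cY_K(X,Z)$ at the middle position of this standard path. The concatenation lemma (Lemma \ref{lem:concatenation}), applied in conjunction with the definition $Y \in \cY_K(X,Z)$, implies that $\cY_K(X,Y)$ and $\cY_K(Y,Z)$ are the two halves of $\cY_K(X,Z)$, each of length approximately $n/2$. Applying Lemma \ref{lem:spqg} once more to each half shows that $d_\cP(X,Y)$ and $d_\cP(Y,Z)$ are each comparable to $d_\cP(X,Z)/2$ up to uniform multiplicative and additive constants; this is enough to conclude $Y$ is a quasi-midpoint, which suffices for Manning's criterion.

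The heart of the argument is the bottleneck property: \emph{every} edge path from $X$ to $Z$ passes within distance $3$ of $Y$. Arguing by contradiction, suppose $\gamma = (X = X_0, X_1, \ldots, X_m = Z)$ has $d_\cP(X_i, Y) \geq 4$ for all $i$. Then Lemma \ref{lem:dist4} applies to every adjacent pair $X_i, X_{i+1}$ with the choice $W = Z$, producing the chain
\[
d_Y(X_0, Z) \;=\; d_Y(X_1, Z) \;=\; \cdots \;=\; d_Y(X_{m-1}, Z).
\]
Since $X_{m-1}$ and $X_m = Z$ are adjacent in $\cP$, the defining edge condition (Definition \ref{def:proj_complex}) gives $d_Y(X_{m-1}, Z) \leq K$. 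Hence $d_Y(X,Z) \leq K$, contradicting the fact that $Y \in \cY_K(X,Z)$, which forces $d_Y(X,Z) > K$.

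Combining the bottleneck constant $\Delta = 3$ with the quasi-midpoint property of $Y$ (uniform in $X,Z$) verifies Manning's criterion and yields the quasi-isometry to a simplicial tree. The only non-routine aspect is booking the quasi-midpoint constants, which requires carefully balancing the inequalities from Lemmas \ref{lem:spqg} and \ref{lem:concatenation}; this is a straightforward calculation rather than a genuine obstacle. All the conceptual content — that standard paths are quasi-geodesics, that they concatenate well, and that Lemma \ref{lem:dist4} forces projections to stabilize along distant edge paths — is already in place.
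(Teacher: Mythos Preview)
The paper does not prove this theorem; it is stated for completeness and attributed to \cite{bestvinabrombergfujiwara,bestvinabrombergfujiwarasisto}, so there is no in-paper argument to compare against. Your approach via Manning's bottleneck criterion is the standard one in those references, and the heart of your argument---the contradiction through Lemma~\ref{lem:dist4} showing that any edge path from $X$ to $Z$ must enter the $3$-ball about $Y$ whenever $Y\in\cY_K(X,Z)$---is correct and is precisely the mechanism used there (and in the proof of Theorem~\ref{thm:bgi} in this paper).

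One step needs adjustment. Manning's criterion, as usually stated, asks for an \emph{exact} midpoint of a geodesic; a point that is a midpoint only ``up to uniform multiplicative and additive constants'' does not obviously suffice, since multiplicative error allows $Y$ to sit an unbounded distance from the true midpoint as $d_\cP(X,Z)\to\infty$. The repair is already latent in what you wrote: your bottleneck contradiction applies to \emph{every} $Y\in\cY_K(X,Z)$, not only the middle one. Run it with the geodesic $[X,Z]$ itself as the test path to see that each standard-path vertex lies within $3$ of some geodesic vertex; since consecutive standard-path vertices are adjacent, a discrete intermediate-value count then places the actual midpoint $M$ of $[X,Z]$ within a uniformly bounded distance (at most $7$) of some vertex $U_j$ of the standard path, and hence every path from $X$ to $Z$ passes within $10$ of $M$. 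So route the last step through all of $\cY_K(X,Z)$ rather than through a single quasi-midpoint, and Manning's criterion follows cleanly.
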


\subsection{Spinning subgroups and the result of Clay--Mangahas--Margalit}

\begin{defn}[{\cite[Section 1.7]{claymangahasmargalit}}] \label{def:spinning}
  Let $\cP$ be a projection complex, and let $G$ be a group acting on
  $\cP$. For each vertex $c$ of $\cP$, let $G_c$ be a subgroup of the
  stabilizer of $c$ in $\cP$. Let $L >0$. The family of subgroups
  $\{G_c\}_{c \in V\cP}$ is an {\it (equivariant) $L$-spinning family}
  of subgroups of $G$ if it satisfies the following two
  conditions. 
  \begin{enumerate}
    \item (Equivariance.) If $g \in G$ and $c$ is a vertex of $\cP$, then \[gG_cg^{-1} = G_{gc}. \]
    \item (Spinning condition.) If $a$ and $b$ are distinct vertices of $\cP$ and $g \in G_a$ is non-trivial, then
          \[d_a(b,gb) \geq L. \]
  \end{enumerate}
\end{defn}

\begin{thm}[{\cite[Theorem 1.6]{claymangahasmargalit}}] \label{thm:CMM}
  Let $\cP$ be a projection complex, and let $G$ be a group acting on~$\cP$. There exists a constant $L = L(\cP)$ with the following property. If $\{G_c\}_{c \in V\cP}$ is an $L$-spinning family of subgroups of $G$, then there is a subset $\cO$ of the vertices of $\cP$ so that the normal closure in $G$ of the set $\{G_c\}_{c \in V\cP}$ is isomorphic to the free product $\Asterisk_{c \in \cO} G_c$.
\end{thm}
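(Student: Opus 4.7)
The plan is to adapt the windmill construction of \cite{claymangahasmargalit} as outlined in the introduction, but with canoeing paths replacing normal forms and with Bass-Serre theory (Theorem~\ref{1.1}) doing the final bookkeeping. The target constant $L$ will be a fixed multiple of the projection constant $\theta$ (and $K$), chosen so that every time we translate by a nontrivial element of some $G_a$, the resulting path turns by much more than $K$ at $a$.

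First I would fix a basepoint $v_0 \in V\cP$, set $W_0 := \{v_0\}$, $G_0 := G_{v_0}$, and let $N_0$ be the closed $1$-neighborhood of $W_0$ in $\cP$. Both $W_0$ and $N_0$ are $G_0$-invariant by the equivariance of the spinning family. Then inductively define $G_{i+1}$ to be the subgroup of $G$ generated by $G_i$ together with all $G_v$ with $v \in N_i$, set $W_{i+1} := G_{i+1} \cdot N_i$, and let $N_{i+1}$ be the $1$-neighborhood of $W_{i+1}$. The claim to prove by induction is the following \emph{tree-like structure}: the nerve-like skeleton $T_{i+1}$ whose vertices are $G_{i+1}$-translates $gN_i$ (for $g \in G_{i+1}/G_i$) and whose edges record nonempty intersection is in fact a simplicial tree, and any two distinct translates $g N_i$ and $g' N_i$ meet in at most a single vertex of $\cP$, which is an apex fixed by a conjugate of some $G_v$ with $v \in N_i \setminus W_i$.

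The core technical tool is canoeing. I would define a \emph{canoeing path} to be a concatenation $\gamma_1 * \gamma_2 * \cdots * \gamma_k$ of standard paths in $\cP$, where at each concatenation vertex $a$ the projection distance $d_a(\gamma_{j-1}^{-}, \gamma_{j}^{+})$ between the incoming and outgoing endpoints exceeds $K$. Using Lemma~\ref{lem:concatenation}, such a path coincides with a single standard path (hence is embedded and quasi-geodesic by Lemma~\ref{lem:spqg}), and by the Bounded Geodesic Image Theorem (Theorem~\ref{thm:bgi}) it cannot revisit any vertex $Y$ that lies off the path. The key observation is that if $g \in G_a \setminus \{1\}$ and $b, b' \in V\cP$ are distinct from $a$, then the spinning condition $d_a(b, gb) \geq L$ together with (P1)/(P2+) and Theorem~\ref{thm:projupgrade} forces any concatenation of a standard path from $b$ to $a$ with a standard path from $a$ to $g b$ to turn by more than $K$ at $a$, provided $L$ is taken large enough (something like $L \geq 3\theta + K$ suffices after upgrading axioms). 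This manufactures canoeing at every "pivot" coming from a nontrivial spinning element.

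With canoeing in hand, the inductive step goes as follows. Any equality or overlap $g N_i \cap g' N_i$ containing two distinct vertices would produce a loop obtained by concatenating an arc inside $g N_i$ with an arc inside $g' N_i$; writing $g^{-1}g'$ as a product of spinning generators rooted in $N_i$, one reads off a canoeing loop, contradicting embeddedness. This proves $T_{i+1}$ is a tree, that edge stabilizers of the $G_{i+1}$-action on $T_{i+1}$ are trivial, and that the vertex stabilizers are precisely $G_i$ and the new spinning subgroups $G_v$ with $v$ ranging over a transversal $\cO_{i+1}$ of $G_{i+1}$-orbits on the new apices. Theorem~\ref{1.1} then gives $G_{i+1} = G_i * \bigl(*_{v \in \cO_{i+1}} G_v\bigr)$. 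Taking the ascending union, $\bigcup_i G_i$ equals the normal closure of $\{G_c\}_{c \in V\cP}$ in $G$ (equivariance makes each conjugate available at some stage), and iterating the splitting yields $\Asterisk_{c \in \cO} G_c$ for $\cO = \bigcup_i \cO_i$.

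The main obstacle I anticipate is the inductive verification that \emph{every} intersection of translates is at most a single vertex: one must show not merely that geodesic-like paths in $\cP$ are embedded but that two embedded "windmill arms" cannot overlap along a nontrivial segment. This is where the fact that canoeing paths are genuinely standard paths (not just locally geodesic) is decisive, since it forces the overlap problem into the uniqueness clause of Lemma~\ref{lemma:tripod} and ultimately into the statement that nontrivial spinning elements move apices far in projection distance, contradicting $d_a(b, gb) \geq L$.
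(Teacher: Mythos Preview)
Your overall strategy---nested windmills, canoeing paths to show the skeleton of the cover of $W_k$ by translates of $N_{k-1}$ is a tree, then Bass--Serre theory and a direct limit---is exactly the paper's. The gap is in how the large angle at each pivot is verified. The paper takes each canoeing piece $\gamma_i$ to be a $\cP$-geodesic (or a concatenation of two) and carries an extra inductive hypothesis (I1): whenever an endpoint of the level-$(k-1)$ path lies on $\partial W_{k-1}$, the terminal geodesic is \emph{perpendicular to the boundary} there (Definition~\ref{def_perp_bound}). This permits extending by one edge into $N_{k-1}\setminus W_{k-1}$ while remaining geodesic (Lemma~\ref{lemma_extend}) and, crucially, arranges that the two vertices \emph{adjacent} to each pivot $c_i$ on the resulting path are of the form $v'$ and $h v'$ for some nontrivial $h\in G_{c_i}$. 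Spinning gives $d_{c_i}(v',hv')\geq L$ between these neighbors, and Proposition~\ref{prop:canoe_distinct} then uses the Bounded Geodesic Image Theorem to promote this to $d_{c_i}(V_{i-1},V_{i+1})>K$ for the far endpoints.

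Your version makes the pieces standard paths between consecutive pivots and wants $d_{c_i}(c_{i-1},c_{i+1})>K$ directly so that Lemma~\ref{lem:concatenation} fires. But $c_{i-1}$ and $c_{i+1}$ are \emph{not} of the form $b,gb$ for $g\in G_{c_i}$; rather $c_{i+1}=h\cdot c'$ with $h\in G_{c_i}\setminus\{1\}$ and $c'$ lying in the \emph{same} translate of $N_{k-1}$ as $c_{i-1}$. You therefore need a uniform bound on $d_{c_i}(c_{i-1},c')$ for two points of a single translate of $N_{k-1}$ as seen from a boundary vertex $c_i$ of that translate. Nothing in your outline supplies this, and (P1)/(P2+) alone do not give it once $W_{k-1}$ is large; obtaining such a bound is essentially equivalent to knowing that standard paths between points of $W_{k-1}$ stay in $W_{k-1}$, which is precisely the inductive control the perpendicularity condition is engineered to provide. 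Without this step, ``one reads off a canoeing loop'' is where the argument stalls.
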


\begin{remark}
  The constant $L$ is linear in $\theta$. See \cite[Section 6(Proof of Theorem 1.6) and Section 3.1]{claymangahasmargalit}.
\end{remark}

We will also need the following lemma. 

\begin{lemma} \label{lem:L_Lprime}
  Suppose that $\cP = \cP(\cY,\theta,K)$ is a projection complex obtained from a collection $(\cY,\{d_{Y}\})$ satisfying the projection axioms. Let $\cP' = \cP'(\cY,\theta',K')$ be the projection complex obtained from upgrading this collection to a new collection $(\cY,\{d_{Y}'\})$ satisfying the strong projection axioms via Theorem \ref{thm:projupgrade}. If $\{G_{c}\}_{c \in V\cP}$ is an $L$-spinning family of subgroups of $G$ acting on $\cP$, then it is an $L'$-spinning family of subgroups of $G$ acting on $\cP'$ where $L'=L-2\theta$. 
\end{lemma}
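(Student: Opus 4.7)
The plan is to observe that this is essentially a direct consequence of the distance comparison from Theorem~\ref{thm:projupgrade}, once we check that the $G$-action descends to the upgraded complex. First, since the vertex sets of $\cP$ and $\cP'$ are both in bijection with $\cY$ and the upgrade in \cite{bestvinabrombergfujiwarasisto} only modifies the projections $\pi_Y$ in a way that depends on the projection data (which $G$ preserves), the action of $G$ on $\cY$ still commutes with the new projections $\pi_Y'$, so $G$ acts on $\cP'$ and the stabilizer of each vertex $c$ in $\cP'$ equals its stabilizer in $\cP$. In particular the family $\{G_c\}_{c \in V\cP}$ is the same family of subgroups regardless of which projection complex we view it over, and the equivariance condition $gG_cg^{-1}=G_{gc}$ carries over verbatim.

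Second, I verify the spinning condition with constant $L' = L - 2\theta$. Fix distinct vertices $a,b$ and a nontrivial $g \in G_a$. By hypothesis $d_a(b,gb) \geq L$. The upgrade inequality from Theorem~\ref{thm:projupgrade} gives
\[
d_a'(b,gb) \geq d_a(b,gb) - 2\theta \geq L - 2\theta = L',
\]
which is exactly the $L'$-spinning condition with respect to the new projection distances.

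The main (really only) thing to be careful about is the first step — that the upgrade is $G$-equivariant, so that the new complex $\cP'$ genuinely carries a $G$-action with the same vertex stabilizers. This is immediate from inspecting the construction in \cite{bestvinabrombergfujiwarasisto}, since it is defined purely in terms of the projection data that $G$ preserves. Once this is noted, the rest is a one-line inequality, and no further obstacle arises.
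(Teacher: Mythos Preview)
Your proof is correct and follows the same approach as the paper: the paper's entire proof is the single line ``By Theorem~\ref{thm:projupgrade}, $d_{Y}' \geq d_{Y} - 2\theta$ for all $Y \in \cY$.'' Your added remarks about the $G$-equivariance of the upgrade are a reasonable elaboration of something the paper takes for granted.
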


\begin{proof}
  By Theorem \ref{thm:projupgrade}, $d_{Y}' \geq d_{Y} -2\theta$ for all $Y\in\cY$. 
\end{proof}

\subsection{Projections in a $\delta$-hyperbolic space} \label{sec:deltahyp}

In this paper we use the $\delta$-thin triangles formulation of
$\delta$-hyperbolicity given as follows. (See \cite[Section
III.H.1]{bridsonhaefliger} and \cite[Section 11.8]{DrutuKapovich} for additional
background.) Given a geodesic triangle $\Delta$ there is an isometry from the set
$\{a,b,c\}$ of vertices of $\Delta$ to the endpoints of a metric tripod
$T_{\Delta}$ with pairs of edge lengths corresponding to the side lengths
of~$\Delta$. This isometry extends to a map
$\chi_{\Delta}: \Delta \rightarrow T_{\Delta}$, which is an isometry when
restricted to each side of $\Delta$. The points in the pre-image of the central
vertex of $T_{\Delta}$ are called the {\it internal points} of~$\Delta$. The
internal points are denoted by $i_{a}$, $i_{b}$, and $i_{c}$, corresponding to the
vertices of $\Delta$ that they are opposite from; that is, the point $i_{a}$ is
on the side $bc$ and likewise for the other two. We say that two points on the triangle are in the same \emph{cusp} if they lie on the segments $[a,i_{b}]$ and $[a,i_{c}]$, or on the analogous segments for the other vertices of the triangle. The triangle $\Delta$ is {\it
  $\delta$-thin} if $p,q \in \chi_{\Delta}^{-1}(t)$ implies that
$d(p,q) \leq \delta$, for all $t \in T_{\Delta}$. In a $\delta$-thin triangle two points lie in the same cusp if they are more than $\delta$ away from the third side. A geodesic metric space is
{\it $\delta$-hyperbolic} if every geodesic triangle is $\delta$-thin. 

Note that another common definition of $\delta$-hyperbolicity requires that every geodesic triangle in the metric space is {\it $\delta$-slim}, meaning that the $\delta$-neighborhood of any two of its sides contains the third side.
A $\delta$-thin triangle is $\delta$-slim; thus, if $X$ is $\delta$-hyperbolic with respect to thin triangles, then $X$ is $\delta$-hyperbolic with respect to slim triangles. We use this fact, as some the constants in the lemmas below are for a $\delta$-hyperbolic space defined with respect to $\delta$-slim triangles.

\begin{defn}
  Let $X$ be a metric space and let $A$ be a closed subset of $X$. For $x \in X$ a {\it nearest-point projection} $\pi_A(x)$ of $x$ to $A$ is a point in $A$ that is nearest to $x$.
\end{defn}

\begin{notation}
  Let $X$ be a metric space and $a,b,p \in X$. We use $[a,b]$ to denote a geodesic from $a$ to~$b$. If $\gamma$ is a path in $X$, we use $\ell(\gamma)$ to denote the length of $\gamma$.
  For $R \geq 0$, we use $B_R(p)$ to denote the open ball of radius $R$ around the point $p$.
\end{notation}

\begin{lemma}[{\cite[Lemma 11.64]{DrutuKapovich}}] \label{lemma_outside_ball}
  Let $X$ be a $\delta$-hyperbolic geodesic metric space. If $[x,y]$ is a geodesic of length $2R$ and $m$ is its midpoint, then every path joining $x$ and $y$ outside the ball $B_R(m)$ has length at least $2^{\frac{R-1}{\delta}}$.
\end{lemma}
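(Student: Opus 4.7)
The plan is to reduce the statement to the following auxiliary fact about how close a midpoint of a geodesic must come to any path between its endpoints: \emph{if $\gamma$ is a (rectifiable) path from $a$ to $b$ of length $L$ in a $\delta$-hyperbolic space and $q$ is any point on a geodesic $[a,b]$, then $d(q,\gamma)\le \delta\log_2 L+1$.} Once this is established, the lemma follows immediately: taking $a=x$, $b=y$, $q=m$ and $L=\ell(\gamma)$, the hypothesis that $\gamma$ lies outside the open ball $B_R(m)$ gives $R\le d(m,\gamma)\le \delta\log_2 L+1$, and rearranging yields $L\ge 2^{(R-1)/\delta}$.

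The auxiliary fact will be proved by a binary subdivision/halving argument, using the $\delta$-slim formulation of hyperbolicity that the paper has just recalled. First I would choose $p$ to be the midpoint of $\gamma$ with respect to arc length, so that the two halves $\gamma_1, \gamma_2$ of $\gamma$ have length $L/2$ each and $d(a,p),d(p,b)\le L/2$. Consider the geodesic triangle $\Delta(a,b,p)$. By $\delta$-slimness, the given point $q\in[a,b]$ lies within $\delta$ of some point $q'$ on $[a,p]\cup[p,b]$; after relabeling the two halves, assume $q'\in[a,p]$. Then $q'$ plays the role of ``$q$'' for the sub-path $\gamma_1$ of length $L/2$, so we may iterate.

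Iterating $k$ times produces a point $q^{(k)}$ lying on a geodesic between the endpoints of a sub-path of $\gamma$ of length $L/2^k$, with
\[
d(q,\gamma)\le k\delta + d\bigl(q^{(k)},\gamma^{(k)}\bigr),
\]
where $\gamma^{(k)}$ is the corresponding sub-path. For the base case, if $\sigma$ is any path from $u$ to $v$ of length at most $\lambda$ and $q''$ is a point on a geodesic $[u,v]$, then trivially $d(q'',\sigma)\le\min(d(q'',u),d(q'',v))\le d(u,v)/2\le \lambda/2$. Choosing $k=\lfloor\log_2 L\rfloor$ makes $L/2^k\le 2$, so the base case contributes at most $1$, giving $d(q,\gamma)\le \delta\log_2 L+1$ as required.

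The main technical point to be careful about is the bookkeeping at each halving step, in particular that $q'$ is indeed a point on the geodesic between the endpoints of the chosen sub-path (which it is, because $q'\in[a,p]$ and this geodesic has $a,p$ as its endpoints and $\gamma_1$ as a candidate path of length $L/2$ between those endpoints). This is the only place where $\delta$-slimness is invoked; the rest is arithmetic and an elementary choice of $k$. No subtle case analysis is needed because the symmetric argument handles the situation when $q'$ lands in $[p,b]$ instead of $[a,p]$, and the induction does not require $q$ to be the midpoint of $[a,b]$ or to be far from the endpoints.
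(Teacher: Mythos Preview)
The paper does not give its own proof of this lemma; it is quoted verbatim from Dru\c{t}u--Kapovich as a preliminary fact and used as a black box. Your binary-subdivision argument is correct and is essentially the standard proof found in the cited reference (and in Bridson--Haefliger, III.H, Proposition~1.6): halve the path by arc length, use $\delta$-slimness of the resulting triangle to move the marked point onto a side, and iterate $\lfloor\log_2 L\rfloor$ times until the remaining sub-path has length at most~$2$. The only cosmetic point worth noting is that the auxiliary bound $d(q,\gamma)\le\delta\log_2 L+1$ presupposes $L\ge 1$; this is harmless here since any path from $x$ to $y$ has length at least $d(x,y)=2R$, and in the paper's applications $R\ge 2+2\delta$.
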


\section{A projection complex built from a very rotating family} \label{sec:build_proj}

In this section we construct a projection complex from a fairly rotating family. Throughout, let $G$ be a group that acts by isometries on a $\delta$-hyperbolic metric space~$X$. Let $\cC = (C, \{G_c \, | \, c \in C\})$ be a $\rho$-separated fairly rotating family for some $\rho \geq 20\delta$.
\begin{defn} [Projections] \label{def:rel_dis_fun}
  Let $2+2\delta \leq R \leq \frac{\rho}{2} - 3\delta$. For $p \in
  C$ let $S_{p} = \partial B_{R}(p)$ equipped with the restriction
  of the path metric on $d_{X \setminus B_{R}}(p)$, where two points
  are at infinite distance if they are in different path components
  of $X \setminus B_{R}(p)$. Set $\cY = \{S_{p}\}_{p \in C}$ and,
  for each $a \in C \setminus \{p\}$, let $\pi_{p}(a) \subset S_{p}$
  be the set of nearest point projections of $a$ to $\partial
  B_{R}(p)$ (equivalently, $\pi_p(a)$ consists of intersection
  points of geodesics $[p,a]$ with $\partial
  B_{R}(p)$). 
\end{defn}

We think of the associated projection distances, $d_p(b,c) =
\diam(\pi_{p}(b) \cup \pi_{p}(c))$, as the penalty (up to an error
of a fixed multiple of $\delta$) of traveling from $b$ to $c$
avoiding a ball of fixed radius around $p$.

The aim of this section is to prove the following theorem. 

\begin{thm} \label{thm:proj_com_from_DGO} For $\theta \geq 121 \delta$, the group $G$ acts by isometries on a projection complex associated to the family
  $\left(\cY, \{\pi_{p}\}_{p\in C} \right)$ satisfying the strong projection axioms for $\theta$. Moreover, the
  family of subgroups $\{G_c\}_{c \in C}$ is an $L$-spinning family for
  $L =2^{\frac{R-2}{\delta}} -  4 -248\delta$.
\end{thm}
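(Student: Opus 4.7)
The theorem requires two pieces: (i) verify that $(\cY,\{\pi_p\})$ satisfies the projection axioms, so that Theorem~\ref{thm:projupgrade} produces a projection complex on which $G$ acts by isometries preserving the projection data, and (ii) check the $L$-spinning condition with the claimed quantitative bound. Equivariance of the action is immediate: $G$ acts on $X$ by isometries, hence takes $B_R(p)$ isometrically to $B_R(gp)$, acts by isometries on the spheres $S_p$ with their outer path metrics, and commutes with nearest-point projection, so $g\cdot\pi_p(a)=\pi_{gp}(ga)$.

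I would verify (P1)--(P3) using standard $\delta$-hyperbolic geometry. By definition $\pi_p(a)$ is the intersection of geodesics $[p,a]$ with $S_p$; two such geodesics $\delta$-fellow travel, so these points are within $\delta$ in $X$. A push-off argument using $\delta$-thin triangles in the annulus just outside $S_p$ (requiring only $R\geq 2+2\delta$) promotes this to a bound by a fixed multiple of $\delta$ in the $S_p$ metric, giving (P1). For (P2), if $d_p(a,b)>\theta$ for a suitable multiple $\theta$ of $\delta$, then the Gromov product at $p$ in the triangle $\{p,a,b\}$ is less than $R+\delta$, and thin-triangle estimates force $[a,p]$ and $[a,b]$ to $\delta$-fellow travel well past their crossings of $S_a$, so $\pi_a(p)$ and $\pi_a(b)$ are close in $S_a$. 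The finiteness axiom (P3) follows from the same triangle argument together with $\rho$-separation of $C$, which bounds the number of apices that can sit in a fixed $\delta$-neighbourhood of a geodesic $[a,b]$. Choosing $\theta\geq 121\delta$ makes all of these estimates go through, and Theorem~\ref{thm:projupgrade} then upgrades to the strong projection axioms at the cost of an additive $2\theta$-loss in the projection distances.

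The heart of the proof is the spinning condition, and this is the step I expect to present the main geometric obstacle. Fix distinct $c,c'\in C$ and $g\in G_c\setminus\{1\}$. By the fairly rotating condition some geodesic $\sigma$ from $c'$ to $gc'$ meets $\overline{B_1(c)}$; since $d(c',c)\geq\rho$ and $R\leq \rho/2-3\delta$, the geodesic $\sigma$ must enter and exit $B_R(c)$, at points $x$ and $y$ respectively. The push-off argument from (P1) shows that $x$ and $y$ lie within a fixed multiple of $\delta$ (in the $S_c$ metric) of $\pi_c(c')$ and $\pi_c(gc')$. The subarc $[x,y]\subset\sigma$ is an $X$-geodesic of length in $[2R-2,2R+2]$ whose midpoint is within distance $3$ of $c$; after a short symmetric truncation Lemma~\ref{lemma_outside_ball} applies and forces every path from $x$ to $y$ inside $X\setminus B_R(c)$ to have length at least roughly $2^{R/\delta}$. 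Bookkeeping the (P1)-slack, the truncation loss, and the additive $2\theta$-loss from the upgrade then yields the stated $d_c(c',gc')\geq L=2^{(R-2)/\delta}-4-248\delta$.

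The main difficulty is constant-tracking: one must choreograph the fairly rotating hypothesis, the (P1) push-off estimate, the truncation needed to apply Lemma~\ref{lemma_outside_ball}, and the $(11,2\theta)$-loss from Theorem~\ref{thm:projupgrade}, all while keeping the exponential $2^{R/\delta}$ growth intact so that, for $R$ close to $\rho/2-3\delta$, the spinning bound $L$ grows exponentially in $\rho$. The geometric input, however, is clean: the ball $B_R(c)$ is a genuine obstruction to paths in $X\setminus B_R(c)$, and the fairly rotating condition guarantees that $\pi_c(c')$ and $\pi_c(gc')$ sit on genuinely opposite sides of this obstruction, so that Lemma~\ref{lemma_outside_ball} furnishes the exponential lower bound that drives $L\to\infty$ as $\rho\to\infty$.
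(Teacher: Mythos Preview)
Your proposal is correct and follows essentially the same route as the paper: verify (P1)--(P3) by thin-triangle estimates, upgrade via Theorem~\ref{thm:projupgrade}, and derive the spinning bound from Lemma~\ref{lemma_outside_ball} using the fairly rotating condition. The only differences are cosmetic: for (P2)--(P3) the paper packages your Gromov-product observation as a standalone lemma (if some geodesic $[a,b]$ misses $B_{R+2\delta}(c)$ then $d_c(a,b)\le 4\delta$), and for the spinning estimate the paper centers the auxiliary ball at the point $a'\in\sigma\cap\overline{B_1(c)}$ with radius $R-1$ (so $B_{R-1}(a')\subset B_R(c)$ and $a'$ is genuinely on the geodesic), which makes the midpoint hypothesis of Lemma~\ref{lemma_outside_ball} hold exactly and avoids your truncation step.
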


We prove the projection axioms are satisfied in Subsection~\ref{subsec:axioms}, and we verify the spinning condition in Subsection~\ref{subsec:spinning}.

\subsection{Verification of the projection axioms} \label{subsec:axioms}

\begin{lemma} \label{P1}
  Axiom (P1) holds for any $\theta \geq 4\delta$.
\end{lemma}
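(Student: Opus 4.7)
The plan is to combine $\delta$-thinness (to show two points in $\pi_p(a)$ are close in $X$) with a short outward push along their defining geodesics so the connecting segment can be routed outside $B_R(p)$.

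Fix $q_1, q_2 \in \pi_p(a)$ and write $q_i = \gamma_i(R)$, where $\gamma_i$ is an arc-length parametrization of a geodesic from $p$ to $a$ through $q_i$. I would apply the thin-triangle comparison to the degenerate geodesic triangle with vertices $\{p, q_1, a\}$ whose sides are $\gamma_1|_{[0,R]}$, $\gamma_1|_{[R, d(p,a)]}$, and $\gamma_2$. A direct tripod computation gives leg lengths $R$, $0$, $d(p,a)-R$, and identifies the internal points opposite $p$ and $q_1$ as $q_1$ and $q_2$ respectively, so $d(q_1, q_2)\le \delta$. Running the same argument with $R$ replaced by $R+\delta$, which is in range since $d(p,a)\ge \rho\ge 20\delta$ and $R\le \rho/2-3\delta$, yields $d(q_1', q_2')\le \delta$ for $q_i' := \gamma_i(R+\delta)$.

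Next I would build a path from $q_1$ to $q_2$ in $X\setminus B_R(p)$ as the concatenation $q_1 \to q_1' \to q_2' \to q_2$: the outer two segments travel along $\gamma_1$ and $\gamma_2$ and have length $\delta$ each, while the middle segment is a geodesic of length at most $\delta$. The outer segments avoid $B_R(p)$ because each $\gamma_i$ exits $p$ radially, and any point $y$ on the middle segment satisfies $d(p, y)\ge d(p, q_1')-d(q_1', y)\ge (R+\delta)-\delta = R$, hence also avoids the open ball. Summing, $d_{S_p}(q_1, q_2)\le 3\delta \le 4\delta \le \theta$, which establishes (P1).

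The only delicate point is verifying that the middle leg stays outside $B_R(p)$: a direct geodesic $[q_1, q_2]$ in $X$ can dip inside the open ball by up to $\delta$, which is precisely why one pushes the endpoints outward by $\delta$ before connecting them. Everything else is routine bookkeeping with the tripod picture.
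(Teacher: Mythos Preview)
Your proof is correct and follows essentially the same strategy as the paper: push the two projection points outward by $\delta$ along their defining geodesics, connect the pushed-out points by a short segment that stays outside $B_R(p)$, and return. The only difference is that the paper bounds the crossing step by citing the $2\delta$-fellow-traveling of geodesics with common endpoints (giving total length $\le 4\delta$), whereas your direct tripod computation on the degenerate triangle gives a crossing of length $\le \delta$ and hence a slightly sharper total of $3\delta$.
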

\begin{proof}
  Let $p,a \in C$ be distinct and let $a',a''$ be two points in
  $\pi_p(a)$. Then $a'$ and $a''$ lie on two geodesics $\gamma'$
  and $\gamma''$ from $a$ to $p$ such that $a' = \gamma' \cap
  \partial B_{R}(p)$ and $a'' = \gamma''\cap \partial
  B_{R}(p)$. Since geodesics in a $\delta$-hyperbolic space
  $2\delta$-fellow travel (see e.g. \cite[Chapter III.H Lemma
  1.15]{bridsonhaefliger}), we can find a path in $X \setminus
  B_{R}(p)$ of length at most $4\delta$ connecting $a'$ and
  $a''$ by traversing along $\gamma'$ from $a'$ to $a$ a
  distance of $\delta$, then traversing a path of length at most
  $2\delta$ from $\gamma'$ to $\gamma''$, and finally traversing
  along $\gamma''$ a distance of at most $\delta$ back towards
  $a''$. If $d(p,a')<\delta$ then $d(a',a'')<2\delta$. Thus we see that $\diam(\pi_{p}(a)) \leq 4\delta$.
\end{proof}

To prove the remaining axioms we need the following lemma 
\begin{lemma} \label{boundedproj}
  For any $a, b \in X$ and $c \in C$ such that some geodesic $\gamma$ from $a$ to $b$ does not intersect $B_{R + 2\delta}(c)$, we have $d_c(a,b) \leq 4 \delta$. 
\end{lemma}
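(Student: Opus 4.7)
The plan is to exploit the $\delta$-thinness of the geodesic triangle $\Delta$ with vertices $c,a,b$ and sides $\gamma_a=[c,a]$, $\gamma_b=[c,b]$, and $\gamma$. First, since $\gamma \subset X \setminus B_{R+2\delta}(c)$ we have $d(c,a),d(c,b)\geq R+2\delta$, so every geodesic from $c$ to $a$ (resp.~$b$) crosses $\partial B_R(c)$ and $\pi_c(a),\pi_c(b)$ are nonempty, with each element lying exactly at distance $R$ from $c$ along some geodesic.

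The key observation is that the $c$-leg of the tripod identification of $\Delta$ is long. Let $i_a,i_b,i_c$ denote the internal points of $\Delta$ on the sides opposite $a,b,c$ respectively, and set $\rho := d(c,i_a)=d(c,i_b)$. Because $i_c\in\gamma$, the hypothesis gives $d(c,i_c)\geq R+2\delta$; combined with $d(i_a,i_c)\leq \delta$ from $\delta$-thinness and the triangle inequality, this yields $\rho \geq R+\delta$. Fix any $p_a\in\pi_c(a)$ and $p_b\in\pi_c(b)$, and choose geodesics $\gamma_a,\gamma_b$ through them. Since $R<\rho$, the point $p_a$ lies on the initial subsegment $[c,i_b]\subset\gamma_a$ and $p_b$ lies on $[c,i_a]\subset\gamma_b$; the thin-triangle identification at common parameter $R$ from $c$ then gives the ambient bound $d(p_a,p_b)\leq\delta$.

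To upgrade this bound to the path metric on $S_c$, I push each projection outward by $\delta$ before crossing. Let $p_a^+,p_b^+$ be the points at distance $R+\delta$ from $c$ on $\gamma_a,\gamma_b$ respectively; these exist because $\rho\geq R+\delta$. The subarcs $[p_a,p_a^+]\subset\gamma_a$ and $[p_b^+,p_b]\subset\gamma_b$ each have length $\delta$ and stay at distance $\geq R$ from $c$. Moreover $d(p_a^+,p_b^+)\leq\delta$ by $\delta$-thinness, and for any $x$ on a geodesic from $p_a^+$ to $p_b^+$ the triangle inequality gives $d(c,x)\geq (R+\delta)-\delta = R$. Concatenating these three segments produces a path in $X\setminus B_R(c)$ from $p_a$ to $p_b$ of length at most $3\delta$, so their path-metric distance on $S_c$ is at most $3\delta$.

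Finally, the within-projection bound $\diam\pi_c(a),\diam\pi_c(b)\leq 4\delta$ follows from the argument of Lemma~\ref{P1} (which only uses $d(c,a),d(c,b)>R$, not that $a,b\in C$). Combining with the cross bound yields $d_c(a,b) = \diam(\pi_c(a)\cup\pi_c(b))\leq 4\delta$. I expect the main technical point to be exactly this ``outward push'' trick: the natural $\delta$-closeness in $X$ coming from thin triangles does not by itself give closeness in the path metric avoiding $B_R(c)$, and the hypothesis that $\gamma$ misses the larger ball $B_{R+2\delta}(c)$ is exactly what creates enough room both to force $\rho\geq R+\delta$ and to move outward by $\delta$ on each geodesic before crossing.
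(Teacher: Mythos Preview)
Your proof is correct and follows essentially the same approach as the paper's: form the thin triangle with sides $[c,a]$, $[c,b]$, $\gamma$, push the projection points outward by $\delta$ along their geodesics to points $p_a^+,p_b^+$ (the paper's $a'',b''$), use that $\gamma$ misses $B_{R+2\delta}(c)$ to conclude these outward points lie in the $c$-cusp and hence are $\delta$-close, and concatenate to get a path of length $\leq 3\delta$ in $X\setminus B_R(c)$; then invoke the argument of Lemma~\ref{P1} for the within-projection diameters. The only stylistic difference is that you make the Gromov-product bound $\rho=(a\mid b)_c\geq R+\delta$ explicit via the internal points, whereas the paper phrases the same fact as ``$a'',b''$ are more than $\delta$ from $\gamma$, hence in the same cusp''. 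One small notational wrinkle: you fix $\gamma_a,\gamma_b$ at the outset and then re-choose them through arbitrary $p_a,p_b$; this is harmless because $\rho$ depends only on the pairwise distances, but it would read more cleanly to choose $p_a,p_b$ first and build the triangle through them (as the paper does).
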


\begin{proof}
  Let $a'$ and $b'$ be points in $\pi_c(a)$ and $\pi_c(b)$,
  respectively. Consider the triangle formed by $\gamma$ and
  geodesics $[a,c]$ and $[b,c]$ where $a' \in [a,c]$ and $b' \in
  [b,c]$. Let $a''$ be the point on $[a,c]$ outside of $B_R(c)$ at
  distance $\delta$ from $a'$, and define $b''$ analogously. See
  Figure \ref{fig:Lemma3_4}. By hypothesis, $a''$ and $b''$ are more
  than $\delta$ away from $\gamma$ so $a''$ and $b''$ must be in the
  same cusp of the geodesic triangle. Therefore, $d(a'', b'') \leq
  \delta$. Note that any geodesic $[a'',b'']$ misses $B_R(c)$, so it
  follows that $d_{X \setminus B_R(c)} (a',b') \leq 3 \delta$ by
  concatenating geodesics $[a', a''], [a'',b'']$, and $[b'',
  b']$. Now by Lemma \ref{P1}, we see that $\diam(\pi_c(a) \cup
  \pi_c(b)) \leq 4 \delta$.
\end{proof}

\begin{figure}[h]
  \centering
  \def\svgwidth{\columnwidth}
  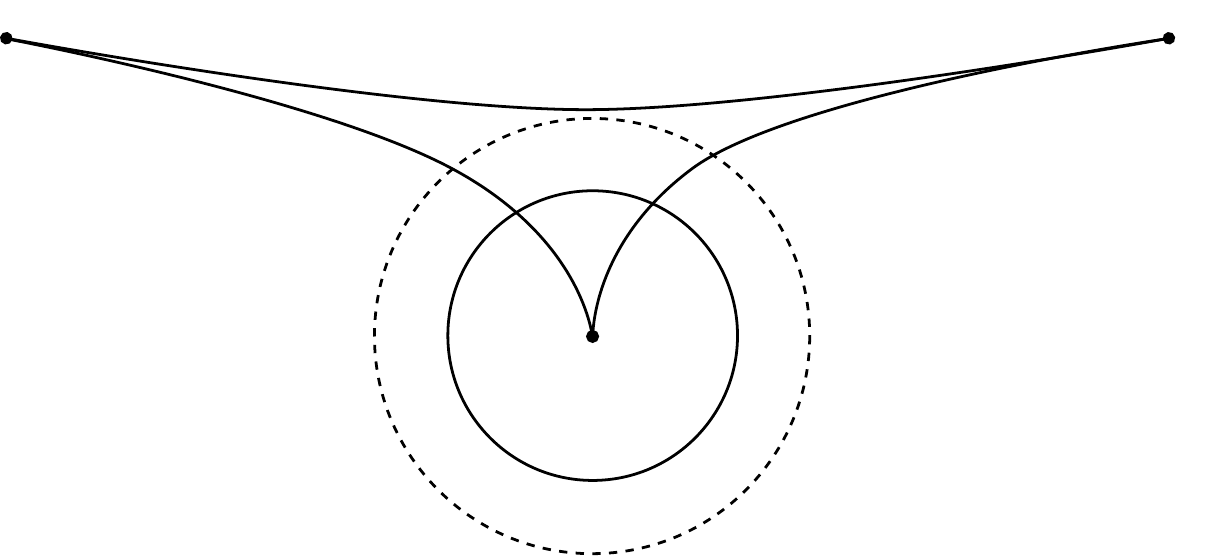
  \caption{Configuration of points in Lemma \ref{boundedproj}. The path in green is a path of length at most $3\delta$ between $a'$ and $b'$ which misses $B_{R}(c)$.}
  \label{fig:Lemma3_4}
\end{figure}

\begin{lemma}
  Axiom (P2) holds with respect to $\{d_a \, | \, a \in C\}$ and $\theta \geq 4 \delta$.
\end{lemma}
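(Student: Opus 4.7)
The plan is to derive (P2) as a direct consequence of Lemma~\ref{boundedproj} together with $\delta$-thinness of triangles. Suppose $d_b(a,c) > \theta \geq 4\delta$. By the contrapositive of Lemma~\ref{boundedproj} (applied with apex $b$), every geodesic from $a$ to $c$ must intersect the ball $B_{R+2\delta}(b)$. This is the geometric replacement for the usual Behrstock inequality: the pair $a,c$ must be ``separated'' by the ball around $b$, which forces the geodesics $[a,b]$ and $[a,c]$ to fellow-travel from $a$ for a long distance, making their intersections with $\partial B_R(a)$ uniformly close in the path metric outside $B_R(a)$.

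To carry this out, fix a geodesic $[a,c]$ and a point $q \in [a,c]$ with $d(q,b) \leq R+2\delta$. Because $C$ is $\rho$-separated with $\rho \geq 20\delta$ and $R \leq \rho/2 - 3\delta$, one has $d(a,q) \geq d(a,b) - d(b,q) \geq \rho - R - 2\delta$. Consider the $\delta$-thin geodesic triangle on vertices $a$, $b$, $q$, whose sides include $[a,b]$ and the initial subsegment $[a,q]$ of $[a,c]$. A direct computation shows the internal point on $[a,b]$ opposite $q$ lies at distance at least $\rho - R - 2\delta$ from $a$, which in turn exceeds $R+2\delta$ by the choice of $R$. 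Thus points at distance $R+2\delta$ from $a$ on both $[a,b]$ and $[a,q]$ lie in the ``$a$-cusp'' of the triangle, and by $\delta$-thinness the points $a^{*} \in [a,b]$ and $c^{*} \in [a,c]$ at distance $R+2\delta$ from $a$ satisfy $d(a^{*}, c^{*}) \leq \delta$.

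Since $a^{*}$ and $c^{*}$ both lie at distance $R+2\delta$ from $a$, any geodesic between them has length at most $\delta$ and therefore stays outside $B_R(a)$ (every point is at distance $\geq R+\delta$ from $a$ by the triangle inequality). Concatenating this short bridge with the subsegments of $[a,b]$ and $[a,c]$ of length $2\delta$ connecting $a^{*}$ and $c^{*}$ back to the projection points $[a,b]\cap \partial B_R(a) \in \pi_a(b)$ and $[a,c]\cap \partial B_R(a) \in \pi_a(c)$ yields a path outside $B_R(a)$ of length at most $5\delta$. Combining with the $4\delta$ diameter bound from Lemma~\ref{P1} on each of $\pi_a(b)$ and $\pi_a(c)$ gives $d_a(b,c) \leq 13\delta$. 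Hence (P2) holds for any $\theta$ at least this universal multiple of $\delta$.

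The only real subtlety is the bookkeeping in the second step: one must verify that the projection radius $R$ is small enough relative to $\rho$ so that the projection points at distance $R$ from $a$ really do fall inside the $a$-cusp of the auxiliary triangle $aqb$, and large enough so that the bridge of length $\leq \delta$ indeed avoids $B_R(a)$. Both requirements are controlled by the standing assumption $2+2\delta \leq R \leq \rho/2 - 3\delta$, so no additional constraint on the constants is needed.
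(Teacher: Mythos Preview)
Your argument is sound, but it establishes (P2) only for $\theta \geq 13\delta$, not for $\theta \geq 4\delta$ as the lemma asserts; you acknowledge as much in your concluding sentence. Since the paper later takes $\theta = 121\delta$, your weaker constant would suffice for the applications, but strictly speaking you have not proved the lemma as stated.

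The paper's proof obtains the sharp constant by applying Lemma~\ref{boundedproj} \emph{twice} rather than once. Assuming $d_a(b,c) > \theta$, it deduces (as you do, with the roles of $a$ and $b$ swapped) that every geodesic $[b,c]$ meets $B_{R+2\delta}(a)$; but then, instead of carrying out a thin-triangle computation inside $S_b$, it shows that some geodesic $[a,c]$ \emph{avoids} $B_{R+2\delta}(b)$. This is done by projecting $a$ to a point $a' \in [b,c]$ (so $a' \in B_{R+2\delta}(a)$), observing that the two sides $[a,a']$ and $[a',c]\subset[b,c]$ of the resulting triangle stay at distance $> \rho - (R+2\delta) \geq R + 4\delta$ from $b$, and then using $\delta$-slimness to conclude the same, up to $\delta$, for the third side $[a,c]$. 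A second invocation of Lemma~\ref{boundedproj} now yields $d_b(a,c) \leq 4\delta$ directly. Your route trades this second use of Lemma~\ref{boundedproj} for an explicit fellow-traveling bound plus two appeals to the diameter estimate of Lemma~\ref{P1}, and that is precisely where the extra $9\delta$ creeps in. Both arguments are short and rest on the same geometric picture; the paper's is simply more economical with constants because it recycles Lemma~\ref{boundedproj} in both directions.
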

\begin{proof}
  Suppose $d_a(b,c) > \theta$; we will show $d_b(a,c)\leq\theta$. By Lemma
  \ref{boundedproj}, every geodesic $[b, c]$ intersects $B_{R+2\delta}(a)$.
  Using the same lemma, we are done if we show some geodesic $[a,c]$ avoids
  $B_{R+2\delta}(b)$. Let $a^{\prime}$ be a nearest point projection of $a$
  to $[b,c]$, and let $[a^{\prime}, c] \subset [b,c]$ be the subpath from
  $a^{\prime}$ to $c$. Note that $a'$, and therefore any geodesic $[a, a']$,
  is contained in $B_{R+2\delta}(a)$. Suppose $[a,c]$ and $[a,a^{\prime}]$ are
  any geodesics and consider the geodesic triangle formed by them and
  $[a^{\prime}, c]$. Using the fact that the points in $C$ are at least $\rho$-separated,
  we see that for any $x \in [a,a']\cup[a^{\prime}, c]$ we have
  $d(b, x) > \rho -(R+2\delta) \ge R + 4\delta$. If $x \in [a, a']$ then $d(b, x) \geq d(b, a) - d(a, x) > \rho -(R+2\delta)$. If $x \in [a', c]$, then $d(b, x) = d(b, a') + d(a', x) \geq d(b, a')$, and we just showed this quantity was greater than $\rho -(R+2\delta)$. The segment $[a,c]$ must be
  contained in the union of $\delta$-neighborhoods of the other two sides, and
  thus, no point on $[a,c]$ can be $(R+2\delta)$-close to $b$.
\end{proof}

\begin{lemma}
  Axiom (P3) holds with respect to $\{d_a \, | \, a\in C\}$ and $\theta \geq 4 \delta$.
\end{lemma}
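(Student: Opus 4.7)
\bigskip

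The plan is to argue by a covering/packing argument along a fixed geodesic from $a$ to $c$. Assume $d_b(a,c) > \theta \geq 4\delta$. By the contrapositive of Lemma~\ref{boundedproj}, every geodesic from $a$ to $c$ must meet the closed ball $B_{R+2\delta}(b)$; in particular, if we fix once and for all a single geodesic $\gamma = [a,c]$, then for each such $b$ there is a point $b' \in \gamma$ with $d(b,b') < R + 2\delta$. So it suffices to bound, in terms of $d(a,c)$, the number of apices $b \in C \setminus \{a,c\}$ admitting such a footprint $b' \in \gamma$.

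Now I would use the $\rho$-separation of $C$ together with the assumption $R \leq \rho/2 - 3\delta$ (so that $2(R+2\delta) \leq \rho - 2\delta$) to show that the footprints $b'$ along $\gamma$ are uniformly spaced. Concretely, for any two distinct apices $b_1, b_2$ whose projection distances satisfy the hypothesis, with associated footprints $b_1', b_2' \in \gamma$, the triangle inequality gives
\[
  \rho \leq d(b_1,b_2) \leq d(b_1,b_1') + d(b_1',b_2') + d(b_2',b_2) < 2(R+2\delta) + d(b_1',b_2'),
\]
so $d(b_1', b_2') > \rho - 2(R+2\delta) \geq 2\delta > 0$. Since $\gamma$ is a geodesic of finite length $d(a,c)$, at most $\lceil d(a,c)/(2\delta) \rceil + 1$ points on $\gamma$ can be pairwise $2\delta$-separated, so the set $\{b \in C \setminus \{a,c\} \mid d_b(a,c) > \theta\}$ is finite, which is exactly Axiom (P3).

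I do not expect any substantive obstacle here; the one place to be careful is to fix a single geodesic $\gamma$ at the start, so that the footprints $b_i'$ all lie on the same path and the distance $d(b_1',b_2')$ really is bounded by a length along $\gamma$. Everything else is a direct application of Lemma~\ref{boundedproj} and the $\rho$-separation hypothesis, using the standing constraint $R \leq \rho/2 - 3\delta$ from Definition~\ref{def:rel_dis_fun} to guarantee that the $2(R+2\delta)$ appearing in the triangle inequality is strictly smaller than $\rho$.
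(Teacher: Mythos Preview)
Your proof is correct and follows essentially the same approach as the paper: both invoke Lemma~\ref{boundedproj} to force any offending apex to lie within $R+2\delta$ of a fixed geodesic between the two given points, and then finish with a packing argument using the $\rho$-separation of $C$ and the constraint $R \leq \rho/2 - 3\delta$. The only cosmetic difference is that the paper covers the geodesic by short segments and argues each $(R+2\delta)$-neighborhood contains at most one apex, whereas you directly show the footprints on the geodesic are pairwise more than $2\delta$ apart; these are equivalent formulations of the same counting.
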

\begin{proof}
  Let $b,c \in C$. We must show the set $\{a \, | \, d_a(b,c)>\theta\}$ is
  finite. If $d_a(b,c)>\theta$, then by Lemma \ref{boundedproj} each geodesic $[b,c]$ 
  must intersect $B_{R+2\delta}(a)$. Fix a geodesic $[b,c]$, and cover $[b,c]$ with finitely many segments of length $\frac{1}{2}$. Each element
  of $\{a \, | \, d_a(b,c)>\theta\}$ lies in a $(R+2\delta)$-neighborhood of one of
  these segments. Since $\rho \ge 2R+6\delta> 2(R + 2\delta)$, each $(R+2\delta)$-neighborhood of such a segment contains at most one point in the set $\{a \, | \, d_a(b,c)>\theta\}$. Thus, the set $\{a \, | \, d_a(b,c)>\theta\}$ is finite.
\end{proof}

\subsection{Verification of the spinning family conditions} \label{subsec:spinning}

For the remainder of this section, let~$\cP$ be the projection complex associated to the set $C$ and the projection distance functions $\{d_{p}|p \in C\}$. The group $G$ acts by isometries on $\cP$. By the construction of $\cP$, for all $c \in C$, the group~$G_c$ is a subgroup of the stabilizer of the vertex $c$ in $\cP$. Moreover, the equivariance condition, Definition~\ref{def:spinning}(1), follows from Definition~\ref{def:very_rot}(a-3). The next lemma verifies the spinning condition, Definition~\ref{def:spinning}(2).


\begin{lemma} \label{lem:spinning}
  If $a, b \in V\cP$ and $g \in G_a$ is non-trivial, then $d_a(b,gb) \geq 2^{\frac{R-2}{\delta}} -  4 -6\delta$. 
\end{lemma}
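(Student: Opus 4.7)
The plan is to apply Lemma~\ref{lemma_outside_ball} to a subsegment of the fairly rotating geodesic from $b$ to $gb$. Since $g \in G_a \setminus \{1\}$ fixes $a$ and $a \neq b$, we have $gb \neq a$, so $d(a, b), d(a, gb) \geq \rho$. Write $\alpha = d(a, b)$, $\beta = d(a, gb)$, and $\ell = d(b, gb)$. By Definition~\ref{def:very_rot}(c$'$), there is a geodesic $\gamma: [0, \ell] \to X$ from $b$ to $gb$ passing through a point $a_0 = \gamma(s_0)$ with $d(a, a_0) \leq 1$. Applying the triangle inequality to $\{a_0, a, b\}$ and $\{a_0, a, gb\}$ yields $s_0 \in [\alpha - 1, \alpha + 1]$ and $\ell - s_0 \in [\beta - 1, \beta + 1]$, hence $\ell \geq \alpha + \beta - 2$, so the Gromov product satisfies $(b \mid gb)_a = \tfrac{1}{2}(\alpha + \beta - \ell) \leq 1$.

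Next, consider the $\delta$-thin geodesic triangle with vertices $a, b, gb$ and sides $[a, b]$, $[a, gb]$, $\gamma$. Because $(b \mid gb)_a \leq 1 < R$, the tripod folding implies that any $b' \in \pi_a(b) = [a, b] \cap \partial B_R(a)$ and the point $\tilde b := \gamma(\alpha - R) \in \gamma$ have the same image on the tripod edge toward $\bar b$, so $d(b', \tilde b) \leq \delta$. Analogously, any $(gb)' \in \pi_a(gb)$ satisfies $d((gb)', \widetilde{gb}) \leq \delta$ where $\widetilde{gb} := \gamma(\ell - \beta + R)$, and a direct computation gives $d(\tilde b, a_0), d(\widetilde{gb}, a_0) \in [R - 1, R + 1]$.

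I will then pick $x_1, x_2 \in \gamma$ at distance exactly $R - 1$ from $a_0$ on the $b$- and $gb$-sides of $a_0$; these exist because $\min(s_0, \ell - s_0) \geq \rho - 1 > R - 1$. The subsegment $[x_1, x_2] \subset \gamma$ is a geodesic of length $2(R - 1)$ with midpoint $a_0$, so by Lemma~\ref{lemma_outside_ball} every path from $x_1$ to $x_2$ in $X \setminus B_{R - 1}(a_0)$ has length at least $2^{(R - 2)/\delta}$. Since $d(a, a_0) \leq 1$ gives $X \setminus B_R(a) \subseteq X \setminus B_{R - 1}(a_0)$, any path $\eta$ from $b'$ to $(gb)'$ in $X \setminus B_R(a)$ automatically avoids $B_{R - 1}(a_0)$. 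Extending $\eta$ by prepending the subgeodesic $\gamma|_{[x_1, \tilde b]}$ (length $\leq 2$) followed by a geodesic from $\tilde b$ to $b'$ (length $\leq \delta$), and appending symmetric short pieces on the $gb$-side, yields a concatenated path $\sigma$ from $x_1$ to $x_2$ of length at most $\ell(\eta) + 4 + 2\delta$.

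The main obstacle will be verifying that $\sigma$ really lies in $X \setminus B_{R - 1}(a_0)$: the two short geodesic segments $[\tilde b, b']$ and $[(gb)', \widetilde{gb}]$ connect pairs of points at distance $\geq R - 1$ from $a_0$, but in a general $\delta$-hyperbolic space such geodesics can dip into $B_{R - 1}(a_0)$ by up to $\delta/2$. I will remedy this by rerouting each short detour through a point on $\gamma$ lying $O(\delta)$ further from $a_0$ before jumping to the projection, keeping the entire path outside $B_{R - 1}(a_0)$ at the cost of an additional additive $O(\delta)$ in length. After absorbing this adjustment into the constants, Lemma~\ref{lemma_outside_ball} yields $\ell(\eta) + 4 + 6\delta \geq 2^{(R - 2)/\delta}$. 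Since $\eta$ was arbitrary and $d_a(b, gb) = \diam(\pi_a(b) \cup \pi_a(gb)) \geq d_{S_a}(b', (gb)')$, the claim follows.
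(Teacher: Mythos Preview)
Your proposal is correct and follows essentially the same strategy as the paper's proof: both arguments take the fairly rotating geodesic from $b$ to $gb$ through a point $a_0$ near $a$, isolate the subsegment of length $2(R-1)$ centered at $a_0$, and then attach short connecting arcs (of total length $\leq 4+6\delta$) to convert any path in $X\setminus B_R(a)$ between projection points into a path in $X\setminus B_{R-1}(a_0)$ to which Lemma~\ref{lemma_outside_ball} applies. The only cosmetic difference is that the paper reads off the $\delta$-closeness of $p_1$ and the corresponding point on the rotating geodesic from the thin triangle with vertices $b,a,a'$ (one side of length $\leq 1$), whereas you use the big triangle $a,b,gb$ together with the Gromov product estimate $(b\mid gb)_a\leq 1$; both routes yield the same $2+3\delta$ bound per side after the rerouting step you correctly identify.
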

\begin{proof}
  Let $a,b \in V\cP$, and let $g \in G_a$ be non-trivial. Let
  $\sigma$ be a geodesic in $X$ from $b$ to $gb$. Let $p_1$ and
  $p_2$ be closest point projections of $b$ and $gb$
  respectively to $\partial B_R(a)$. By the fairly rotating
  condition, $\sigma$ passes through a point $a'$ in the
  $1$-neighborhood of $a$. Let $q_1$ and $q_2$ be the
  intersection points of $\sigma$ with $\partial B_{R-1}(a')$,
  and let $\gamma$ be a path from $p_1$ to $p_2$ in $X \setminus
  B_{R}(a)$. We will now construct, using $\gamma$, a path $\gamma'$ from $q_1$ to $q_2$ in
  $X \setminus B_{R-1}(a')$. See Figure \ref{fig:spinning}. A
  lower bound on the length of $\gamma'$ from Lemma
  \ref{lemma_outside_ball} will give us a lower bound on the
  length of $\gamma$.

  Consider the triangle in $X$ formed by $\sigma$ and geodesics
  $[b,a]$ and $[gb,a]$ such that $p_1 \in [b,a]$ and $p_2 \in
  [gb, a]$. Let $q'_1$ be the point on $\sigma \cap B_R(a)$ we
  reach by following $\sigma$ away from $q_1$ towards
  $b$. Define $q'_2$ similarly. The points $q'_1$ and $p_1$ are
  in the same cusp of the geodesic triangle with vertices $b,a,$
  and $a'$. This follows since $d_{X}(p_{1},a) = R$,
  $d_{X}(q_{1}',a') \geq R-1$, $[a,a']$ is an edge of the
  triangle of length $1$, and $R \geq 2+2\delta$. Note also that
  $d_X(b,p_1)=d_X(b,a)-R$ and $d_X(b,a)-1\leq d_X(b,a')\leq
  d_X(b,a)+1$, so $d_X(b,a)-R\leq d_X(b,q_1)\leq d_X(b,a)-R+2$.
  Thus, we can travel a distance $\leq 2$ from $q_1$ towards $b$
  to get to a point at the same distance from $b$ as $p_1$, and
  then along each side of the triangle and $\delta$ between the
  sides to see $d_{X \setminus B_{R-1}(a')}(p_1, q_1) \leq 2+3
  \delta$, and similarly for $p_2$ and $q_2$. By concatenating
  $\gamma$ with paths outside $B_{R-1}(a')$ of length at most $2
  + 3\delta$ from $p_1$ to $q_1$ and $p_2$ to $q_2$ we see that
  $\ell(\gamma') \leq \ell(\gamma) + 4 + 6 \delta$.
  
  Now by Lemma~\ref{lemma_outside_ball},  we have $\ell(\gamma') \geq 2^{\frac{R-2}{\delta}}$; in the language of the lemma, $[q_1,q_2] \subset \sigma$ is a geodesic of length $2(R-1)$, $\gamma'$ is a path connecting $q_1$ and $q_2$ outside the ball $B_{R-1}(a')$, and $a'$ is the midpoint of the geodesic segment. Therefore, $\ell(\gamma) \geq 2^{\frac{R-2}{\delta}} -  4 -6\delta$.
\end{proof}

\begin{figure}[h]
  \centering
  \def\svgwidth{\columnwidth}
  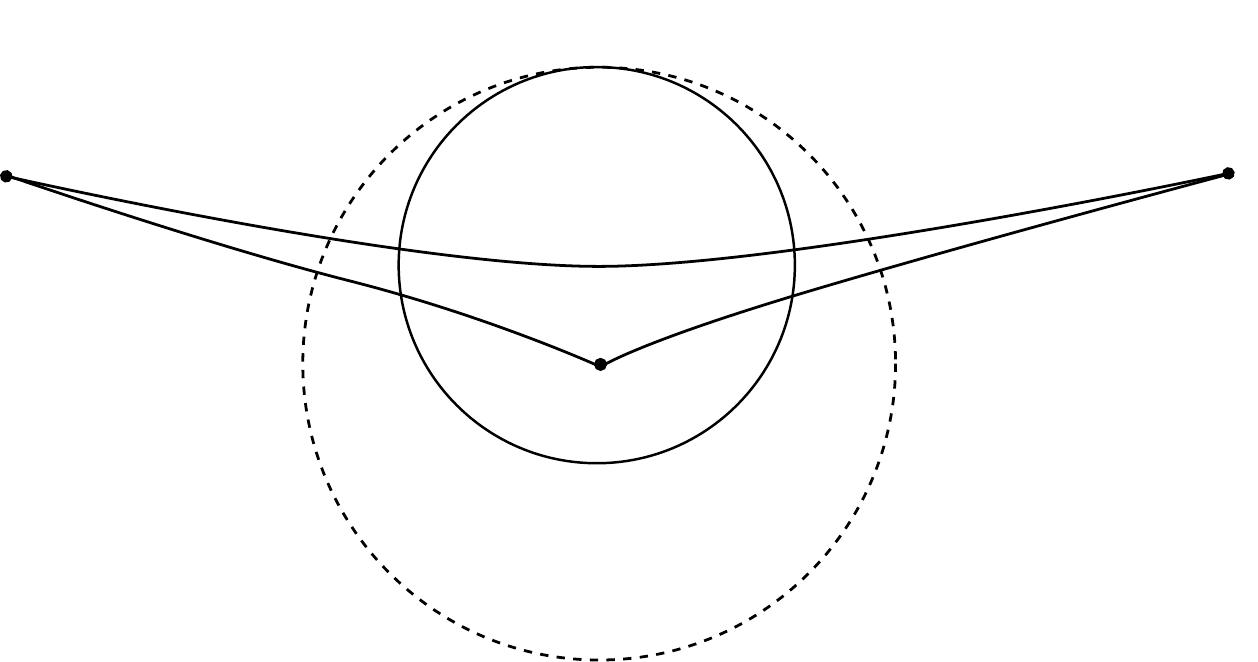
  \caption{Setup for Lemma \ref{lem:spinning}. The path in blue is $\gamma$, a geodesic in $X \setminus B_{R}(a)$ from $p_{1}$ to $p_{2}$ and the path in red is $\gamma'$, a geodesic in $X \setminus B_{R-1}(a')$ from $q_{1}$ to $q_{2}$.}
  \label{fig:spinning}
\end{figure}

We conclude this section with:
\begin{proof}[Proof of Theorem~\ref{thm:proj_com_from_DGO}]
  The lemmas in Subsection~\ref{subsec:axioms} combine to prove the projection axioms hold with respect to $C$ equipped with the distance functions $\{d_{p}|p \in C\}$. The discussion and lemma in Subsection~\ref{subsec:spinning} along with upgrading the projection axioms to the strong projections axioms via Theorem \ref{thm:projupgrade} and applying Lemma \ref{lem:L_Lprime} prove the remaining claims in the statement of the theorem. 
\end{proof}

\section{Free products from spinning families} \label{sec:canoe}

The aim of this section is to give a new proof of Theorem~\ref{thm:CMM}, the result of Clay--Mangahas--Margalit.

\subsection{Canoeing paths}

The results in this section are motivated by the notion of canoeing in the hyperbolic plane, as illustrated in Figure~\ref{fig_canoeH2}. We will not use the following proposition, but include it as motivation.

\begin{prop}[{\cite[Lemma 11.3.4]{ECHLPT92}, Canoeing in $\Hy^2$}]
  Let $0<\alpha \le \pi$. There exists $L >0$ so that if $\sigma = \sigma_1 * \dots * \sigma_k$ is a concatenation of geodesic segments in $\Hy^2$ of length at least $L$ and so that the angle between adjacent segments is at least $\alpha$, then the path $\sigma$ is a $(K,C)$-quasi-geodesic, with constants depending only on $\alpha$.
\end{prop}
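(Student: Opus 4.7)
The plan is to deduce the canoeing property from the local-to-global principle for quasi-geodesics in the $\delta$-hyperbolic space $\Hy^2$ (see e.g.~\cite[Chapter III.H, Theorem 1.13]{bridsonhaefliger}). Concretely, I would verify that $\sigma$ is a uniform local quasi-geodesic on some scale $\Lambda(\alpha)$ depending only on $\alpha$, and then invoke the local-to-global theorem to promote this to a global $(K,C)$-quasi-geodesic bound with constants depending only on $\alpha$.

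The key estimate is for two consecutive segments. Let $\sigma_i$ and $\sigma_{i+1}$ meet at a vertex $v$ with interior angle $\beta \geq \alpha$, and let $a = \ell(\sigma_i)$, $b = \ell(\sigma_{i+1})$, both at least $L$. Letting $x,y$ be the far endpoints of $\sigma_i,\sigma_{i+1}$, the hyperbolic law of cosines gives
\[
    \cosh d(x,y) \;=\; \cosh a \cosh b \;-\; \sinh a \sinh b \cos \beta.
\]
Since $\beta \geq \alpha$ we have $1-\cos \beta \geq 1 - \cos \alpha > 0$, and using $\sinh t \geq \tfrac12(e^t - 1)$ a direct computation yields
\[
    d(x,y) \;\geq\; a + b \;-\; C(\alpha),
\]
for an explicit $C(\alpha)$ depending only on $\alpha$ (essentially $C(\alpha) \sim 2\log\bigl(2/(1-\cos \alpha)\bigr)$). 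Thus each two-segment concatenation is a $(1,C(\alpha))$-quasi-geodesic on its own scale, and for $L \geq L_0(\alpha)$ large the endpoints are separated by at least $\tfrac{3}{2}L$.

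This two-segment bound extends to any subpath of $\sigma$ spanning a bounded number of segments: such a subpath covers a portion of some $\sigma_j$, then zero or more complete intermediate segments, then a portion of another $\sigma_{j'}$. Chaining the two-segment estimate along consecutive vertices (and using that each complete intermediate segment contributes its full length to the progress between endpoints, up to the cumulative error $C(\alpha)$ per turn) produces uniform local quasi-geodesic constants $(K_0, C_0)$ depending only on $\alpha$ at a scale $\Lambda(\alpha)$. The local-to-global theorem then supplies global constants $(K,C)$ depending only on $K_0, C_0$, and the hyperbolicity constant of $\Hy^2$, hence only on $\alpha$.

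The main obstacle is the calibration between the three parameters: the hyperbolic cosine estimate forces $L \gtrsim \log\bigl(1/(1-\cos\alpha)\bigr)$ in order to make two-segment concatenations close to geodesic, while the local-to-global principle requires the scale $\Lambda(\alpha)$ to exceed a threshold that grows with $K_0, C_0$ and $\delta(\Hy^2)$. Both quantities grow like $\log(1/\alpha)$ for small $\alpha$, so a single choice of $L = L(\alpha)$ suffices; the rest of the argument reduces to routine hyperbolic trigonometry.
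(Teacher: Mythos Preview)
The paper does not give a proof of this proposition: it is quoted from \cite[Lemma~11.3.4]{ECHLPT92}, and the authors explicitly state that they ``will not use the following proposition, but include it as motivation.'' So there is no proof in the paper to compare against.

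Your strategy is sound and is one standard route to this result. The two-segment estimate via the hyperbolic law of cosines is correct, and in fact holds uniformly for all $a,b>0$ (not only large ones): writing $\cosh d = \cosh a\cosh b\,(1-\tanh a\tanh b\cos\beta)$ and observing that $1-\tanh a\tanh b\cos\beta \geq \min(1,\,1-\cos\alpha)>0$, one obtains $d \geq a+b-C(\alpha)$ with $C(\alpha)=\log\bigl(4/\min(1,1-\cos\alpha)\bigr)$. The ``chaining'' step you sketch is the vaguest part of the outline, and it is also unnecessary: if you take the local scale to be $\Lambda = L$ (rather than a larger multiple), then every subpath of arc-length at most $L$ crosses at most one bend vertex, so the two-segment estimate alone already shows such a subpath is a $(1,C(\alpha))$-quasi-geodesic. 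The local-to-global theorem then applies as soon as $L$ exceeds the threshold determined by $(1,C(\alpha))$ and the hyperbolicity constant of $\Hy^2$, yielding global constants depending only on $\alpha$.
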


\begin{figure}[h] 
  \centering
  \begin{overpic}[scale=.8, tics=5]{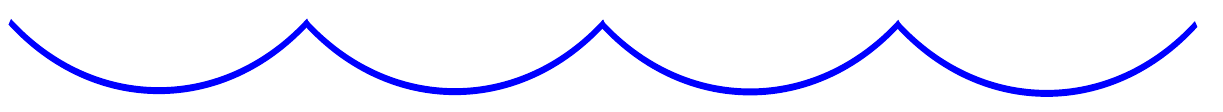}    
    \put(-8,4){$\Hy^2$}
    \put(9,4){\small{$\geq L$}}
    \put(35,4){\small{$\geq L$}}
    \put(58,4){\small{$\geq L$}}
    \put(82,4){\small{$\geq L$}} 
    \put(22,0.5){\small{$\geq \alpha$}}
    \put(47,0.5){\small{$\geq \alpha$}}    
    \put(71,0.5){\small{$\geq \alpha$}}    
  \end{overpic}
  \caption{{\small Canoeing paths in the hyperbolic plane are embedded quasi-geodesics. The segments have length at least $L$, and the angle between adjacent segments is at least $\alpha$.}}
  \label{fig_canoeH2}
\end{figure}

\begin{defn}
  If $\gamma = \{X_1, \ldots, X_k\}$ is a path of vertices in a projection complex, then the {\it angle} in $\gamma$ of the vertex $X_i$ is $d_{X_i}(X_{i-1}, X_{i+1})$.  
\end{defn}

The following definition is tailored to our purposes.

\begin{defn}
  A {\it $C$-canoeing path} in a projection complex is a concatenation $\gamma = \gamma_1 * \gamma_2 * \ldots * \gamma_m$ of paths so that the following conditions hold. 
  \begin{enumerate}
    \item Each $\gamma_i$ is an embedded nondegenerate path, and is either a geodesic or the concatenation $\alpha_i*\beta_i$ of two geodesics.
    \item The common endpoint $V_i$ of $\gamma_i$ and $\gamma_{i+1}$ has angle at least $C$ in $\gamma$ for $i \in \{1, \ldots, m-1\}$. We refer to these points as \emph{large angle points} of $\gamma$. 
  \end{enumerate}
\end{defn}

Since any subpath of a canoeing path is canoeing, it follows
that canoeing paths are embedded.
The proof that the endpoints of a canoeing path are distinct uses the Bounded Geodesic Image Theorem for projection complexes (Theorem~\ref{thm:bgi}).

\begin{prop} \label{prop:canoe_distinct}
  Let $\cP(\cY,\theta,K)$ be a projection complex satisfying the strong projection axioms, and let $M$ be the constant given in Theorem~\ref{thm:bgi}. If $C > 4M+K$, then the the large angle points of a $C$-canoeing path lie on a standard path. In particular, the endpoints of a $C$-canoeing path are distinct. 
\end{prop}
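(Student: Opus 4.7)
The plan is to prove, by induction on the number of pieces $m$, the stronger statement that for every $C$-canoeing path $\gamma = \gamma_1 * \cdots * \gamma_m$ with endpoints $V_0, V_m$ and large angle points $V_1, \ldots, V_{m-1}$, each $V_i$ belongs to the standard path $\cY_K(V_0, V_m)$, in its natural order. Since membership in such a standard path forces $V_0 \neq V_m$ (otherwise every projection distance $d_Y(V_0, V_m)$ would be bounded by $\theta < K$), this immediately yields the ``in particular'' clause.

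For the base case $m = 2$ there is a single large angle point $V_1$; let $P_1$ and $Q_1$ be its neighbors in $\gamma$, so the hypothesis reads $d_{V_1}(P_1, Q_1) \geq C$. The subpath of $\gamma_1$ from $V_0$ to $P_1$ consists of a single geodesic (if $\gamma_1$ is a geodesic) or the concatenation of two geodesics (if $\gamma_1 = \alpha_1 * \beta_1$), and because $\gamma_1$ is embedded with $V_1$ appearing only as its endpoint, each constituent (sub)geodesic is disjoint from the vertex $V_1$. Applying the Bounded Geodesic Image Theorem (Theorem~\ref{thm:bgi}) to each such subgeodesic and using the triangle inequality for the projection distance $d_{V_1}$ gives $d_{V_1}(V_0, P_1) \leq 2M$, and symmetrically $d_{V_1}(Q_1, V_2) \leq 2M$. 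A further triangle inequality then yields $d_{V_1}(V_0, V_2) \geq C - 4M > K$, so $V_1 \in \cY_K(V_0, V_2)$.

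For the inductive step with $m \geq 3$, apply the inductive hypothesis to the canoeing subpaths $\gamma_1 * \cdots * \gamma_{m-1}$ and $\gamma_{m-1} * \gamma_m$ to obtain $d_{V_{m-2}}(V_0, V_{m-1}) > K \geq \theta$ and $d_{V_{m-1}}(V_{m-2}, V_m) > K$. The first inequality lets me invoke the strong projection axiom (P2+) to replace $V_{m-2}$ by $V_0$ in the projection at $V_{m-1}$, yielding $d_{V_{m-1}}(V_0, V_m) = d_{V_{m-1}}(V_{m-2}, V_m) > K$, so $V_{m-1} \in \cY_K(V_0, V_m)$. The Concatenation Lemma (Lemma~\ref{lem:concatenation}) then gives $\cY_K(V_0, V_m) = \cY_K(V_0, V_{m-1}) \cup \cY_K(V_{m-1}, V_m)$, and the inductive hypothesis applied to $\gamma_1 * \cdots * \gamma_{m-1}$ places $V_1, \ldots, V_{m-2}$ inside the first factor.

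The main technical point is the bookkeeping in the base case: one must verify that BGIT applies in every configuration of $\gamma_1$ and $\gamma_2$, including when either is a concatenation of two geodesics and when a constituent piece has length $1$ (so that, for example, $P_1 = V_0$ or an interior concatenation vertex coincides with $P_1$). The embedded, nondegenerate clause in the definition of a canoeing path is precisely what guarantees that each relevant subgeodesic misses the vertex $V_1$, keeping BGIT applicable and the bound $d_{V_1}(V_0, P_1) \leq 2M$ uniformly valid. A secondary subtlety in the inductive step is the distinctness hypothesis in (P2+); should one wish to avoid it, a preliminary application of (P2) together with the standing assumption $K \geq 3\theta$ and axiom (P1) first rules out $V_0 = V_m$, after which (P2+) applies legitimately to four distinct vertices.
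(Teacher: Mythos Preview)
Your proof is correct and rests on the same two ingredients as the paper's: the Bounded Geodesic Image Theorem to control $d_{V_i}$ at a large angle point, and the Concatenation Lemma to assemble the standard path. The paper, however, dispenses with the induction. It simply applies your base-case BGIT estimate directly at \emph{every} large angle point $V_i$: since the portion of $\gamma_i$ from $V_{i-1}$ to the neighbor $B_i$ of $V_i$ is a concatenation of at most two geodesics missing $V_i$, one gets $d_{V_i}(V_{i-1},B_i)\le 2M$ and likewise $d_{V_i}(B_i',V_{i+1})\le 2M$, hence $d_{V_i}(V_{i-1},V_{i+1})>K$ for all $i$ at once, and Lemma~\ref{lem:concatenation} finishes. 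This direct route never invokes (P2+), so the distinctness issue you flag in your last paragraph simply does not arise; your inductive packaging is valid but adds a layer the argument does not need.
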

\begin{proof}
  Let $\gamma = \gamma_1 * \ldots * \gamma_k$ be a $C$-canoeing path with $C >4M+K$. Let $x$ and $y$ denote the endpoints of $\gamma$. Let $B_i$ be the vertex of $\gamma_i$ adjacent to the large-angle point $V_i$, and let $B_i'$ be the vertex of $\gamma_{i+1}$ adjacent to $V_i$. We will assume $\gamma_i$ is the concatenation $\alpha_i*\beta_i$ of two geodesics.

  \begin{figure}[h] 
    \centering
    \begin{overpic}[scale=.7,tics=5]{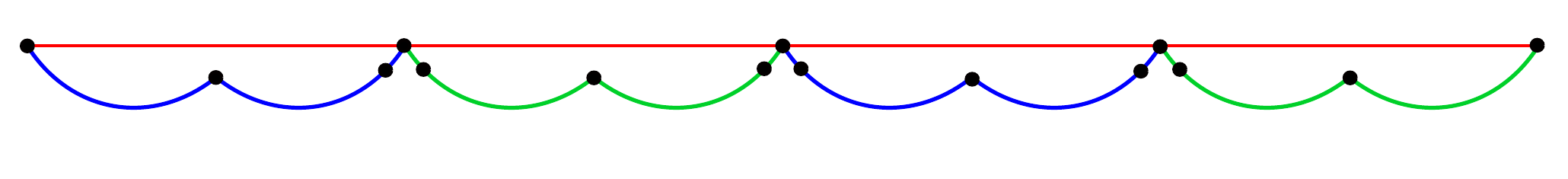}    
      \put(0,10){$x=V_{0}$}
      \put(10,10){\textcolor{red}{$\sigma$}}
      \put(12.5,3){$W_1$}
      \put(24,10){$V_1$}
      \put(22.8,4){\small{$B_1$}}
      \put(26,4){\small{$B_1'$}} 
      \put(36.5,3){$W_2$}
      \put(60.5,3){$W_3$}
      \put(84.5,3){$W_4$}
      \put(95,10){$y=V_{4}$}
      \put(49,10){$V_2$}
      \put(73,10){$V_3$}
      \put(47,4){\small{$B_2$}}
      \put(50,4){\small{$B_2'$}}
      \put(71,4){\small{$B_3$}}
      \put(74,4){\small{$B_3'$}}
      \put(5,2){\textcolor{blue}{$\gamma_1$}}
      \put(31,2){\textcolor{green}{$\gamma_2$}}
      \put(55,2){\textcolor{blue}{$\gamma_3$}}
      \put(80,2){\textcolor{green}{$\gamma_4$}}
    \end{overpic}
    \caption{{\small To prove that the endpoints, $x$ and $y$, of a canoeing path $\gamma$ are distinct, we show that the red path $\sigma$ that connects the large-angle points is a standard path.}}
    \label{fig_canoeing_path}
  \end{figure}

  Write for brevity $V_{0}:=x$ and $V_{k}:=y$.
  For $i \in \{1, \ldots, k\}$, let $\sigma_{i}$ be the standard path from $V_{i-1}$ to $V_{i}$.
  Then let $\sigma = \sigma_1 * \ldots * \sigma_k$ be the
  concatenation of the standard paths. We will show that $\sigma$
  is a nontrivial standard path by proving each concatenation
  angle is larger than $K$, which is a sufficient condition by
  Lemma~\ref{lem:concatenation}. Note that by the Bounded Geodesic Image Theorem (Theorem \ref{thm:bgi}), $d_{V_{i}}(B_{i},V_{i-1}) \le 2M$ and $d_{V_{i}}(B_{i}',V_{i+1}) \le 2M$. By the assumption that $d_{V_{i}}(B_{i},B_{i}')>4M+K$, we have $d_{V_{i}}(V_{i-1},V_{i+1})>K$, concluding the proof.
\end{proof}

Combining this with Lemma \ref{lem:spqg} yields the following.

\begin{corollary} \label{cor:cpbound} 
  Let $\gamma$ be a $C$-canoeing path with $C> 4M+K$ connecting the points $X$ and $Y$ and let $k$ be the number of large angle points on $\gamma$. Then $d_{\cP}(X,Y) \geq \frac{k}{2}$.
\end{corollary}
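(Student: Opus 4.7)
\medskip

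\textbf{Plan of proof.} The strategy is to feed the output of Proposition~\ref{prop:canoe_distinct} into Lemma~\ref{lem:spqg}. By Proposition~\ref{prop:canoe_distinct} applied to the canoeing path $\gamma$, all the large angle points $V_{1},\dots,V_{k}$ of $\gamma$ lie on a single standard path $\sigma$ from $V_{0}=X$ to $V_{k+1}=Y$. Since subpaths of canoeing paths are canoeing, Proposition~\ref{prop:canoe_distinct} also guarantees that the endpoints of every subpath are distinct, so the $k+2$ points $X, V_{1},\dots,V_{k}, Y$ are pairwise distinct vertices on $\sigma$.

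In particular, the standard path $\cY_{K}(X,Y)$ contains the $k$ interior large angle points $V_{1},\dots,V_{k}$, so $|\cY_{K}(X,Y)| \geq k$. Setting $n = |\cY_{K}(X,Y)| + 1$, we have $n \geq k+1$, and Lemma~\ref{lem:spqg} then gives
\[
d_{\cP}(X,Y) \;\geq\; \left\lfloor \tfrac{n}{2} \right\rfloor + 1 \;\geq\; \left\lfloor \tfrac{k+1}{2} \right\rfloor + 1 \;\geq\; \tfrac{k}{2},
\]
which is the desired inequality.

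There is no real obstacle here: the work has already been done in establishing that the large angle points lie on a standard path (Proposition~\ref{prop:canoe_distinct}) and that standard paths are quasi-geodesics (Lemma~\ref{lem:spqg}). The only subtlety to verify is distinctness of the $V_{i}$ as vertices of $\sigma$, which follows from the embeddedness consequence of Proposition~\ref{prop:canoe_distinct} applied to subpaths of $\gamma$.
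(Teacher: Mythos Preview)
Your proof is correct and follows exactly the approach the paper intends: the paper's proof is simply the one-line remark ``Combining this with Lemma~\ref{lem:spqg} yields the following,'' and you have supplied precisely those details---the large angle points are distinct (by embeddedness of canoeing paths, noted just before Proposition~\ref{prop:canoe_distinct}) and lie in $\cY_K(X,Y)$, so $|\cY_K(X,Y)|\geq k$ and Lemma~\ref{lem:spqg} finishes.
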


\subsection{Canoeing in windmills to prove dual graphs are trees} \label{subsec:cmmproof}

We will prove the following theorem in this section.

\begin{thm} \label{thm:new_CMM}
  Suppose that $\cP = \cP(\cY,\theta,K)$ is a projection complex
  satisfying the strong projection axioms, and let $G$ be a
  group acting on $\cP$ preserving the projection data. Suppose
  that $\{G_c\}_{c \in V\cP}$ is an $L$-spinning
  family of subgroups of $G$ for $L > 4M+K$, where $M$ is the
  constant given in Theorem~\ref{thm:bgi}. Then, there is a
  subset $\cO\subset V\cP$ of the vertices of $\cP$ so that the subgroup
  of $G$ generated by $\{G_c\}_{c \in V\cP}$ is isomorphic
  to the free product $*_{c \in \cO} G_c$.
\end{thm}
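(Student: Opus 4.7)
The plan is to realize $\langle G_c : c \in V\cP \rangle$ as the direct limit of an ascending chain of subgroups $G_0 \leq G_1 \leq G_2 \leq \cdots$ and, at each stage, to exhibit $G_{n+1}$ as a free product of $G_n$ with a collection of vertex subgroups via Bass–Serre theory. Following the windmill strategy of \cite{DGO} and \cite{claymangahasmargalit}: fix a basepoint vertex $v_0 \in V\cP$ and set $W_0 = \{v_0\}$, $G_0 = G_{v_0}$; inductively, let $N_n$ be the closed $1$-neighborhood of $W_n$ in $\cP$, let $G_{n+1} = \langle G_v : v \in N_n \rangle$, and define the $(n+1)$st windmill $W_{n+1} = G_{n+1}\cdot N_n$, which is thus covered by translates of $N_n$ glued along vertices.

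The inductive heart of the argument is to show that $W_{n+1}$ has a tree-like structure in the following precise sense: the dual graph $T_{n+1}$ whose vertices are the $G_{n+1}$-translates of $W_n$ on one side and the vertices of $N_n \setminus W_n$ (and their $G_{n+1}$-translates) on the other, with an edge whenever the translate contains the vertex, is a tree. If this is granted, then $G_{n+1}$ acts on $T_{n+1}$ without inversions, with trivial edge stabilizers, and with vertex stabilizers that are either conjugate to $G_n$ or equal to some $G_v$ with $v \in N_n \setminus W_n$. Theorem~\ref{1.1} then yields $G_{n+1} \cong G_n * \bigl(\Asterisk_{v} G_v\bigr)$, where $v$ runs over a set of $G_{n+1}$-orbit representatives in $N_n \setminus W_n$.

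The hard part is showing $T_{n+1}$ is a tree, and this is exactly where canoeing paths enter, replacing the normal-form arguments of \cite{claymangahasmargalit}. Connectedness of $T_{n+1}$ is immediate from the definition of $G_{n+1}$ and $W_{n+1}$. For simple connectedness, a hypothetical nontrivial cycle corresponds to a reduced expression $g_k \cdots g_1$, with $g_i \in G_{v_i} \setminus \{1\}$ and consecutive apices $v_i, v_{i+1}$ distinct, that fixes the initial base translate. I would translate such a word into an explicit path in $\cP$ by concatenating, for each $i$, two geodesics meeting at the image of the apex $v_i$ under the partial product; the spinning hypothesis forces the angle at each meeting point to be at least $L > 4M+K$, so the concatenation is a $C$-canoeing path with $C > 4M+K$. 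Proposition~\ref{prop:canoe_distinct} and Corollary~\ref{cor:cpbound} then give that the endpoints are distinct in $\cP$, contradicting the assumption that the word produces a closed loop in $T_{n+1}$.

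Finally, $\bigcup_n G_n$ coincides with $\langle G_c : c \in V\cP \rangle$, since every apex $c$ lies in some $N_n$, and the compatible free-product decompositions at finite stages assemble into the desired decomposition $\langle G_c \rangle \cong \Asterisk_{c \in \cO} G_c$ for a suitable subset $\cO \subset V\cP$ of orbit representatives of the new apices introduced at each stage. The main obstacle is setting up the canoeing path from a putative relation cleanly: one must identify, for each letter $g_i$, the correct geodesic segments to concatenate, verify that these are honest nondegenerate embedded (or bi-geodesic) subpaths, and confirm that the large-angle condition at every junction follows from the spinning inequality applied at the apex $v_i$. Once this translation is carried out, the tree-like conclusion, and hence the free-product decomposition, follows immediately from the canoeing machinery of the previous subsection.
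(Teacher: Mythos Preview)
Your proposal is correct and follows essentially the same approach as the paper: windmills, canoeing paths to show the skeleton of the cover by translates of $N_{k-1}$ is a tree, and Bass--Serre theory to conclude. The paper resolves your flagged ``main obstacle'' via an auxiliary inductive hypothesis (I1): any two vertices of $W_k$ can be joined by an $L$-canoeing path in $W_k$ whose initial and terminal geodesic segments are \emph{perpendicular to the boundary} of $W_k$; this perpendicularity is exactly what lets one extend such a path by a single edge into $N_k \setminus W_k$ (Lemma~\ref{lemma_extend}) and arranges that the edges immediately before and after each junction vertex $v$ are related by a nontrivial element of $G_v$, so the spinning inequality gives the required large angle.
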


As in \cite{claymangahasmargalit}, we inductively define a sequence of subgraphs $\{W_i\}_{i \in \N}$ of $\cP$ called {\it windmills}. Our methods diverge from those of Clay--Mangahas--Margalit in that we show that each windmill $W_i$ admits a graph of spaces decomposition with dual graph a tree. We inductively define a sequence of subgroups $\{G_i\}_{i \in \N}$ of $G$ so that $G_i$ acts on the dual tree to $W_i$ with trivial edge stabilizers. Hence, we obtain a free product decomposition for $G_i$ by Bass-Serre theory. By the equivariance condition and because the windmills exhaust the projection complex, we ultimately obtain
\[ \la G_c \ra_{c \in V\cP} = \varinjlim_{i} G_i  = *_{c \in \cO} G_c. \] 

\begin{defn}[Windmills]
  Fix a base vertex $v_0 \in V\cP$, let $\cO_{-1}=\{v_0\}$, and let
  $W_0 = \{v_0\}$ be the base windmill. Let $G_0 = G_{v_0}$. Let
  $N_0$ be the $1$-neighborhood of $W_0$, and let $G_1 = \la \,
  G_v \, | \, v \in N_0 \, \ra$. Recursively, for $k \geq 1$,
  let $W_k = G_k \cdot N_{k-1}$, let $N_k$ be the
  $1$-neighborhood of $W_k$, and let $G_{k+1} = \la \, G_v \, |
  \, v \in N_k \, \ra$. Finally, for $k \ge 0$, let $\cO_k$ be a set of
  $G_k$-orbit representatives in $N_{k}\smallsetminus W_{k}$ and
  $\cO=\cup_{k=-1}^\infty \cO_k$.
\end{defn}

We will use the following notion to extend geodesics in the projection complex. 

\begin{defn} \label{def_perp_bound}
  The {\it boundary} of the windmill $W_k$, denoted by $\p W_k$, is the set of vertices in $W_k$ that are adjacent to a vertex in $\cP - W_k$. A geodesic $[u,v]$ in $\cP$ that is contained in $W_k$ is {\it perpendicular to the boundary at $u$} if $u \in \p W_k$ and $d_{\cP}(v,\p W_k) = d_{\cP}(v,u)$.
\end{defn}

The next lemma follows immediately from Definition~\ref{def_perp_bound}. 

\begin{lemma} \label{lemma_extend}
  If a geodesic $[u,v]$ contained in $W_k$ is perpendicular to the boundary at $u$, and $w \in \cP - W_k$ is a vertex adjacent to $u$, then the concatenation $[v,u]*[u,w]$ is a geodesic in $\cP$.
\end{lemma}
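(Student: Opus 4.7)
The plan is to show that the concatenation $[v,u]*[u,w]$, which has length $d_{\cP}(v,u) + 1$, achieves the distance $d_{\cP}(v,w)$ and is therefore itself a geodesic. The triangle inequality already gives $d_{\cP}(v,w) \leq d_{\cP}(v,u) + 1$, so the only thing to establish is the reverse inequality $d_{\cP}(v,w) \geq d_{\cP}(v,u) + 1$.

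To prove this lower bound, I would fix any geodesic $\gamma$ from $v$ to $w$ in $\cP$ and use the fact that it must exit the windmill. Since $v \in W_k$ while $w \notin W_k$, traversing $\gamma$ vertex by vertex, there is a last vertex $u'$ lying in $W_k$, and the next vertex of $\gamma$ lies in $\cP \setminus W_k$ and is adjacent to $u'$. By definition of $\p W_k$, this forces $u' \in \p W_k$. Splitting $\gamma$ at $u'$ then gives
\[
d_{\cP}(v,w) \;=\; d_{\cP}(v,u') + d_{\cP}(u',w).
\]
The perpendicularity hypothesis $d_{\cP}(v,\p W_k) = d_{\cP}(v,u)$ bounds the first summand below by $d_{\cP}(v,u)$, and since $u' \in W_k$ while $w \notin W_k$ are distinct vertices, the second summand is at least $1$. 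Adding these two bounds yields $d_{\cP}(v,w) \geq d_{\cP}(v,u) + 1$, as needed.

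There is no real obstacle here — the argument is entirely formal, and the only small point to check is the existence of the boundary vertex $u'$, which is immediate from the fact that one endpoint of $\gamma$ lies in $W_k$ and the other does not. This is why the author flags the lemma as following immediately from Definition~\ref{def_perp_bound}.
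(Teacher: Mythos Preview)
Your proof is correct and is exactly the routine verification the paper has in mind; the paper gives no proof beyond the remark that the lemma follows immediately from Definition~\ref{def_perp_bound}, and your argument spells out precisely that immediate deduction.
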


\begin{proof}[Proof of Theorem~\ref{thm:new_CMM}]
  First, we show that the following properties hold for all $k\geq 0$:
  \begin{enumerate}
    \item[(I1)] Any two distinct vertices of $W_k$ can be joined by an
          $L$-canoeing path $\gamma = \gamma_1 * \gamma_2 * \ldots *
          \gamma_m$ in $W_k$ so that the following holds.
          If the initial vertex of $\gamma_1$ is on the boundary of $W_k$, then the first geodesic $\alpha_1$ (or $\gamma_1$) is perpendicular to the boundary at that point. Likewise for the other endpoint of $\gamma$.

    \item[(I2)] Two translates of $N_{k-1}$ either coincide,
          intersect in a point, or are disjoint. The stabilizer in
          $G_k$ of $N_{k-1}$ is $G_{k-1}$ and the stabilizer of $v\in
          N_{k-1}\smallsetminus W_{k-1}$ in $G_k$ is $G_v$. The
          skeleton $S_k$ of the cover of $W_k$ by the translates of
          $N_{k-1}$ is a tree. (See Figure \ref{fig:treeblobs}.) Furthermore, if $\gamma$ is a canoeing
          path constructed in (I1) connecting two vertices of $W_{k}$,
          then every vertex of $\gamma$ which is an intersection point between distinct translates of $N_{k-1}$ is a large angle point of $\gamma$.

  \end{enumerate}
  \begin{figure}[ht!]
    \centering
    \includegraphics[width=0.6\textwidth]{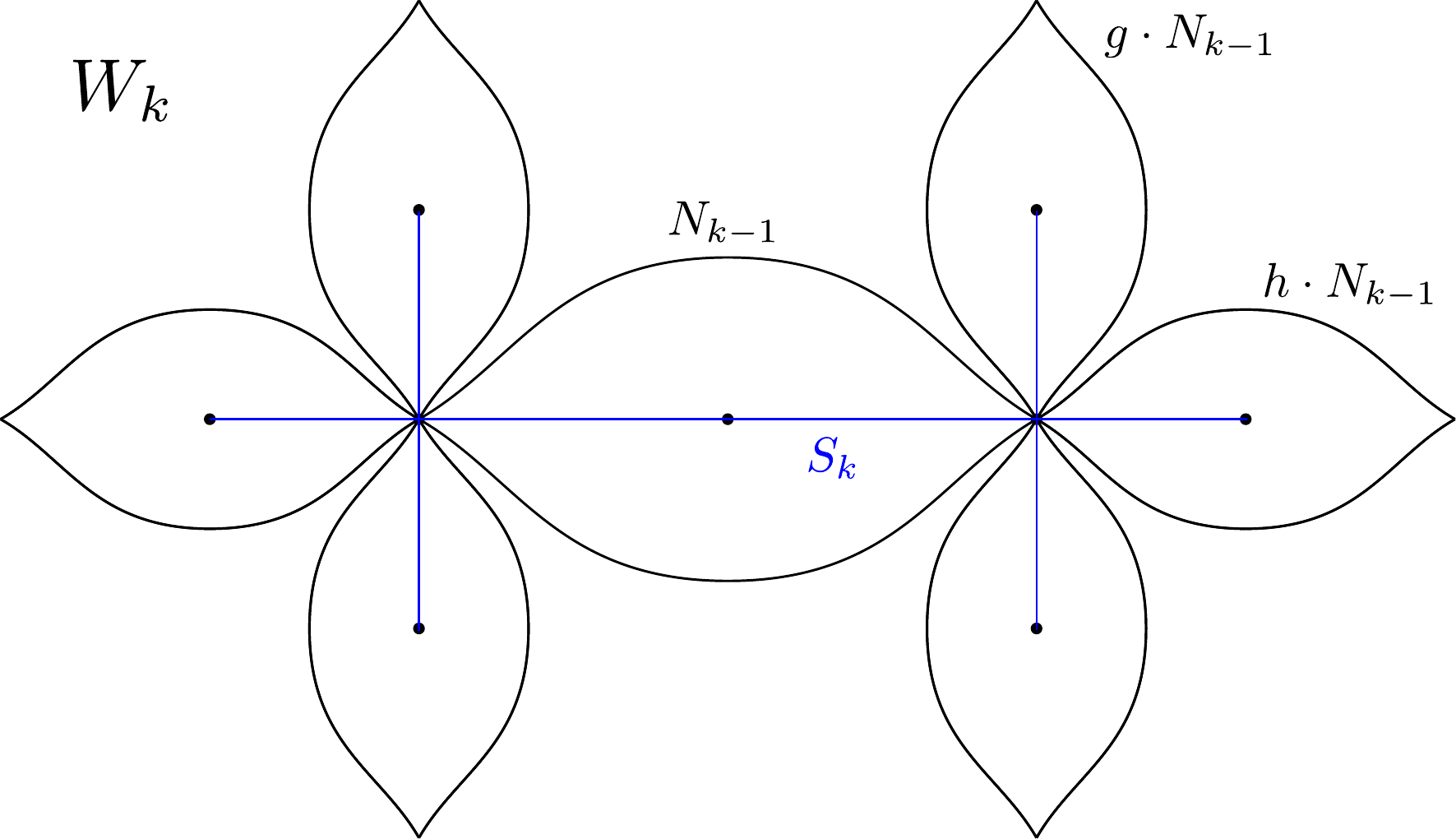}
    \caption{The cover of the windmill $W_k$ by the translates of $N_{k-1}$ and its skeleton $S_k$.}
    \label{fig:treeblobs}
  \end{figure}

  Recall that the {\it skeleton} is defined to be the bipartite graph
  whose vertex set is $V_1\sqcup V_2$ with a vertex $p\in V_1$ for every
  translate of $N_{k-1}$ and a vertex $q\in V_2$ for every intersection
  point between distinct translates, and edges represent incidence.

  We proceed by induction. For the base case, we note that the claims
  hold trivially for $k=0$.
  For the induction hypotheses, assume that (I1) and (I2) hold for
  $k-1\geq 0$; we will prove they also hold for $k$. We will need the following claim. 
  
  \begin{claim} \label{claim:vertex_intersect}
    If $g \in G_v\smallsetminus\{1\}$ for a vertex $v \in N_{k-1} - W_{k-1}$, then $g\cdot N_{k-1} \cap N_{k-1} = \{v\}$. 
  \end{claim}
  \begin{proof}[Proof of Claim~\ref{claim:vertex_intersect}]
    Let $x \in N_{k-1}$ and $y \in g\cdot N_{k-1}$ with $x \neq v \neq y$. To show $x \neq y$, we will build a path from $x$ to $y$ satisfying (I1). See Figure~\ref{figure_induction}. Let $v' \in W_{k-1}$ be adjacent to $v$. Let $x' \in W_{k-1}$ so that $x = x'$ if $x \in W_{k-1}$, and otherwise, $x$ and $x'$ are adjacent. By the induction hypotheses, there exists a path $\gamma = \gamma_1 * \ldots * \gamma_m$ from $x'$ to $v'$ in $W_{k-1}$ satisfying conditions (I1). The first geodesic $\alpha_1$ (or $\gamma_1$) of $\gamma$ extends to a geodesic to $x$ by Lemma~\ref{lemma_extend}. Similarly, the final geodesic $\beta_m$ (or $\gamma_m$) extends to a geodesic to~$v$. Thus, the path $\gamma$ extends to a path $\gamma'$ from $x$ to $v$ that is contained in $N_{k-1}$ and satisfies the conditions of (I1). Similarly, there exists a path $\delta = \delta_1* \ldots *\delta_n$ from $gv'$ to a vertex $y' \in g\cdot W_{k-1}$ with $y'=y$ if $y \in g\cdot W_{k-1}$ or $d_{\cP}(y,y') = 1$. As above, the path $\delta$ extends to a path from $v$ to $y$ satisfying (I1). Since $d_v(v',gv') \geq L$, the concatenation $\gamma_1 * \ldots * \gamma_m * \delta_1 * \ldots * \delta_n$ satisfies (I1). Thus, $x \neq y$ by Proposition~\ref{prop:canoe_distinct}. We also point out that $v$ is a large angle point of this canoeing path.    
    \end{proof}
    
    \begin{figure}[h] 
    \centering
     \begin{overpic}[scale=.55, tics=5]{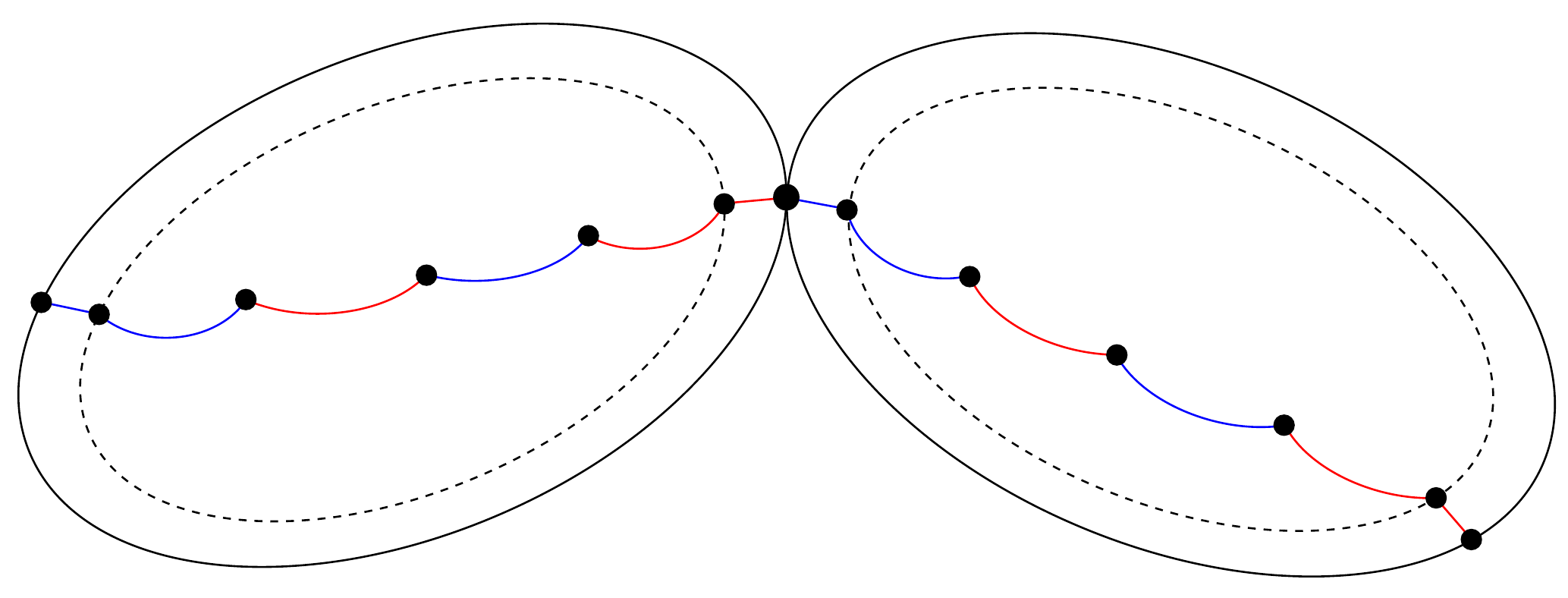}    
        \put(0,19){$x$}
        \put(7.5,18){$x'$}
        \put(10,33){$N_{k-1}$}
        \put(20,25){$W_{k-1}$}
        \put(9,14){\textcolor{blue}{$\gamma_1$}}
        \put(40,20){\textcolor{red}{$\gamma_4$}}
        \put(43,25){$v'$}
        \put(49.5,29.5){$v$}
        \put(56,25){$gv'$}
        \put(58,17){\textcolor{blue}{$\delta_1$}}
        \put(85,32){$g\cdot N_{k-1}$}
        \put(70,25){$g\cdot W_{k-1}$}
        \put(82,6){\textcolor{red}{$\delta_4$}}
        \put(90,7.8){$y'$}
        \put(94,1){$y$}
      \end{overpic}
     \caption{{\small Canoeing paths are used to prove $N_{k-1} \cap g \cdot N_{k-1} = \{v\}$. Canoeing paths $\gamma_1*\ldots *\gamma_m$ from $x'$ to $v'$ and $\delta_1*\ldots *\delta_n$ from $gv'$ to $y'$ exist by the induction hypotheses. Since the ends of these paths are perpendicular to the boundary, they can be extended to a canoeing path from $x$ to $y$. Thus, $x \neq y$ for any $x \in N_{k-1} - \{v\}$ and $y \in g\cdot N_{k-1} - \{v\}$.}}
     \label{figure_induction}
    \end{figure}    
   
   \begin{claim} \label{claim_IH1}
    Given the induction hypotheses, property (I1) holds for $W_k$. 
   \end{claim}
   \begin{proof}[Proof of Claim~\ref{claim_IH1}]
        Let $x,y \in W_k$. Suppose first that $x$ and $y$ are
        contained in the same $G_k$-translate of $N_{k-1}$, say in
        $N_{k-1}$ itself. Let $x', y' \in W_{k-1}$ with $x=x'$ if $x
        \in W_{k-1}$ and $d_{\cP }(x,x')=1$ otherwise, and similarly
        for $y'$. By the induction hypothesis, there exists a path
        $\gamma = \gamma_1 * \ldots * \gamma_m$ from $x'$ to $y'$. The
        first geodesic $\alpha_1$ (or $\gamma_1)$ can be extended to
        $x$ by Lemma~\ref{lemma_extend}, and the last geodesic
        $\beta_m$ (or $\gamma_m$) can be extended to $y$ to produce a
        new geodesic $\gamma'$ that is perpendicular to the boundary
        at $x$ and $y$. Thus, (I1) holds in this case.
        
        We may now assume that $x \in N_{k-1}$ and $y \in g \cdot
        N_{k-1}$ for some $g \in G_k\smallsetminus G_{k-1}$. Choose a
        decomposition $g = g_1 \ldots g_m$ with $g_i \in G_{v_i}$ for
        $v_i \in N_{k-1}$ so that $m$ is minimal. Observe that $m\geq
        1$ and that $g_i \notin G_{k-1}$ for any $i \in \{1, \ldots,
        m\}$. Indeed, if $g_0g_i$ appears as a subword of $g$ with
        $g_0 \in G_{k-1}$ and $g_i \in G_{v_i}$, then $g_0g_i =
        g_0g_ig_0^{-1}g_0 = g_{i'}g_0$ for $g_{i'} \in G_{g_0v_i}$ by
        the equivariance condition. That is, the element $g_0$ can be
        shifted to the right, and since $g_0$ stabilizes $N_{k-1}$,
        the element $g$ could be written with fewer letters,
        contradicting the minimality of the decomposition.
   
        We now build a path from $x$ to $y$. The translates
        $g_1g_2\ldots g_i\cdot N_{k-1}$ and $g_1g_2 \ldots
        g_{i+1}\cdot N_{k-1}$ intersect in the single vertex $g_1g_2
        \ldots g_iv_{i+1}$ for $i \in \{1, \ldots, k-1\}$ by the
        assumptions on $g_i$ and
        Claim~\ref{claim:vertex_intersect}. Similarly, $N_{k-1} \cap
        g_1N_{k-1} = \{v_1\}$. Therefore, the methods in the proof of
        Claim~\ref{claim:vertex_intersect} can be inductively applied
        to build a path from $x$ to $y$ satisfying (I1). That is, the
        path is constructed to pass through each intersection point
        $c_{i+1}=g_1g_2 \ldots g_iv_{i+1}$ and the edges
        $e_{i+1},f_{i+1}$ immediately before and after $c_{i+1}$
        satisfy $f_{i+1}=h_{i+1}(e_{i+1})$ for a nontrivial
        $h_{i+1}\in G_{c_{i+1}}$. The restriction of
        the path to each translate of $N_{k-1}$ is built using
        property (I1) applied to the translate of $W_{k-1}$.
   \end{proof}
          
   \begin{claim} \label{claim_IH2}
    Property (I2) is satisfied by $G_k$ and $W_k$.
   \end{claim}
   \begin{proof}[Proof of Claim~\ref{claim_IH2}]
We may assume one of the translates is $N_{k-1}$ itself and the other
is $g\cdot N_{k-1}$ where $g\in G$ is written as $g=g_1\cdots g_m$
with $g_i\in G_{v_i}$ and $m$ minimal as above. If $m>1$ then the
canoeing path we constructed from a vertex in $N_{k-1}$ to a vertex in
$g(N_{k-1})$ is nondegenerate, showing that $N_{k-1}\cap
g\cdot N_{k-1}=\emptyset$. If $m=1$, we showed in Claim
\ref{claim:vertex_intersect} that $N_{k-1}\cap
g_1\cdot N_{k-1}=\{v_1\}$. We now prove that $S_k$ is a tree. Since $W_k$
is a connected graph, $S_k$ is also connected.

        Suppose towards a contradiction that $p_1, q_1, p_2, q_2,
        \ldots, p_n, q_n, p_1$ is an edge path that is an embedded
        loop in the graph with $p_i \in V_1$ and $q_i \in V_2$. Each
        vertex $p_i$ corresponds to a translate $g_i\cdot N_{k-1}$
        with $g_i \in G_k$. Consecutive translates intersect in a
        point, and since the edge path does not backtrack, the
        intersection points $g_{i-1}\cdot N_{k-1}\cap g_{i}\cdot
        N_{k-1}$ and $g_{i}\cdot N_{k-1}\cap g_{i+1}\cdot N_{k-1}$ are
        distinct. Under these assumptions we constructed a
        nondegenerate canoeing path from any vertex in $g_1\cdot
        N_{k-1}$ to any vertex in $g_n\cdot N_{k-1}$, showing that the
        two translates are disjoint by
        Proposition~~\ref{prop:canoe_distinct}. But the edge subpath
        $p_n, q_n, p_1$ indicates $g_1N_{k-1} \cap g_nN_{k-1} \neq
        \emptyset$. Thus, $S_k$ is a tree.
   \end{proof}
   
   \noindent {\bf Conclusion.} We now use property (I2) to conclude
   the proof of Theorem~\ref{thm:new_CMM}. That is, we define a subset
   $\cO \subset V\cP$ so that $\la G_c
   \ra_{c \in V\cP} \leq G$ is isomorphic to the free product
   $\Asterisk_{c \in \cO} G_c$. First we check that $G_k \cong G_{k-1}
   * \bigl( *_{v \in \cO_{k}} G_v \bigr)$ for each $k\geq 1$. The group $G_k$ acts on $W_k$ preserving the covering by the
        translates of $N_{k-1}$ and so it acts on the skeleton
        $S_k$. The 
        edge stabilizers are trivial by
        Claim~\ref{claim:vertex_intersect}. There is one $G_k$-orbit
        in the vertex set $V_1$, and the group $G_{k-1}$ stabilizes
        the vertex corresponding to $N_{k-1}$. Therefore, the free
        product decomposition of $G_{k}$ follows from the definition of
        $\cO_{k}$ and Bass--Serre theory.
        The quotient $S_k/G_k$ is
        also a tree with a vertex representing $V_1$ and vertices
        representing orbits in $V_2$, all connected to $V_1$.
   
       \begin{figure}[h]
         \centering
         \def\svgwidth{.85\columnwidth}
      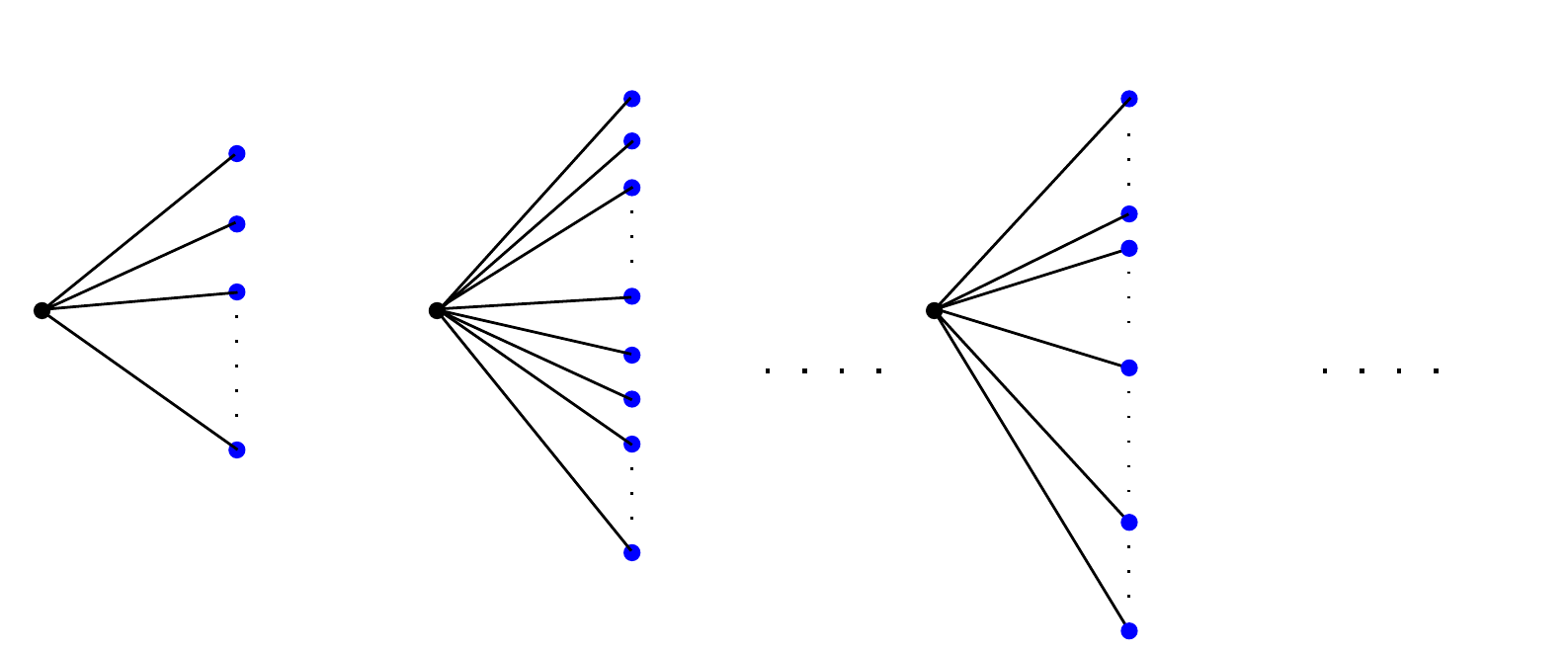
      \caption{Directed system of graphs of groups decompositions for the groups $\{G_{k}\}$.}
    \label{fig:bassserreinduction}
   \end{figure}
   
   Since the
   windmills exhaust the projection complex, $\la G_c \ra_{c \in V\cP}
   = \varinjlim_{k} G_k$. Finally, $\varinjlim_{k} G_k = \Asterisk_{c
     \in \cO} G_c$ for $\cO = \cup_{k=-1}^{\infty} \cO_k$, which again can
   be deduced from a Bass-Serre theory argument as follows.
   
   We will specify an increasing union of trees so that the group $\varinjlim_{k}G_{k}$ acts on the direct limit tree. 
   Recall that (I2) yields for each $k$ a graph of groups decomposition of $G_{k}$ with vertex groups $G_{k-1}$ and $G_v$ for each $v \in \cO_{k}$. There is an edge $\{G_v, G_{k-1}\}$ with trivial edge group for each $v \in \cO_{k}$. As depicted in Figure~\ref{fig:bassserreinduction}, the graph of groups decomposition for~$G_2$ can be expanded using the graph of groups decomposition for $G_1$. More specifically, in the graph of groups decomposition for $G_2$, delete the vertex for $G_1$, and replace it with the graph of groups decomposition for $G_1$, attaching every group $G_v$ for $v \in \cO_2$ to the vertex $G_0$ with trivial edge group. The group $G_2$ then acts on the new corresponding Bass-Serre tree. Continue this recursive procedure: in the graph of groups decomposition for $G_k$, delete the vertex for $G_{k-1}$ and replace it with the recursively obtained graph of groups decomposition for $G_{k-1}$, attaching every group $G_v$ for $v \in \cO_{k}$ to $G_0$ with
  trivial edge group. This process yields an increasing union of Bass--Serre trees, and the $\varinjlim_{k}G_{k}$ acts on the direct limit tree as desired.
   \end{proof}
   
   \section{Free products from rotating families} \label{sec:DGO_proof}

    The aim of this section is to combine Theorem~\ref{thm:proj_com_from_DGO} and Theorem~\ref{thm:new_CMM} to give a new proof of the following theorem of Dahmani--Guirardel--Osin with different constants. 

    \begin{thm}  \label{thm:new_proof_DGO}
        Let $G$ be a group acting by isometries on a $\delta$-hyperbolic metric space with $\delta \geq 1$, and let $\cC = (C, \{G_c \, | \, c \in C\})$ be a $\rho$-separated fairly rotating family for some $\rho \ge 2\delta \log_{2}(\delta)+ 38 \delta$. Then, the normal closure in $G$ of the set $\{G_c\}_{c \in C}$ is isomorphic to a free product $\Asterisk_{c \in C'} G_c$, for some (usually infinite) subset $C' \subset C$.
    \end{thm}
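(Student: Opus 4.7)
The plan is to deduce Theorem~\ref{thm:new_proof_DGO} by composing Theorem~\ref{thm:proj_com_from_DGO}, which builds a projection complex and a spinning family from a fairly rotating family, with the spinning family theorem recast as Theorem~\ref{thm:new_CMM}. First I would note that the equivariance condition (a-3) of the rotating family immediately implies that $\la G_c \mid c \in C \ra$ is normal in $G$: indeed, $g G_c g^{-1} = G_{gc} \leq \la G_c \mid c \in C\ra$ for every $g \in G$ and $c \in C$. So the normal closure coincides with the generated subgroup, and it suffices to produce a subset $C' \subset C$ with $\la G_c \mid c \in C \ra \cong \Asterisk_{c \in C'} G_c$.

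Next I would apply Theorem~\ref{thm:proj_com_from_DGO} with the radius parameter $R := \rho/2 - 3\delta$, the largest value permitted. Since $\rho \geq 2\delta \log_2(\delta) + 38\delta$ and $\delta \geq 1$, we certainly have $R \geq 2 + 2\delta$, so the hypotheses of Theorem~\ref{thm:proj_com_from_DGO} are satisfied. This produces a projection complex $\cP$ on which $G$ acts by isometries, satisfying the strong projection axioms for some $\theta \geq 121\delta$, and on which the collection $\{G_c\}_{c \in C}$ forms an $L$-spinning family with
\[
L \;=\; 2^{(R-2)/\delta} - 4 - 248\delta \;=\; 2^{(\rho - 6\delta - 4)/(2\delta)} - 4 - 248\delta.
\]

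The main obstacle is the constant-chasing needed to feed this $L$-spinning family into Theorem~\ref{thm:new_CMM}, which requires $L > 4M + K$, where $M = 8K + 2\theta$ is the Bounded Geodesic Image constant from Theorem~\ref{thm:bgi} and $K \geq 3\theta$ is the parameter defining $\cP$. Choosing $K = 3\theta$ with $\theta = 121\delta$ gives $4M + K = 33K + 8\theta = 107 \theta$, a fixed linear function of $\delta$. The task therefore reduces to verifying that the hypothesis $\rho \geq 2\delta \log_2(\delta) + 38\delta$ is exactly what is needed to guarantee that the exponential term $2^{(\rho - 6\delta - 4)/(2\delta)}$ surpasses the linear threshold $107 \theta + 4 + 248\delta$. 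Equivalently, one checks that
\[
\frac{\rho - 6\delta - 4}{2\delta} \;\geq\; \log_2(\delta) + C_0
\]
for a suitable absolute constant $C_0$, which is immediate from the assumption $\rho \geq 2\delta\log_2(\delta) + 38\delta$ once $C_0$ is made explicit. The $2\delta \log_2(\delta)$ term in the hypothesis is precisely the price of converting a linear lower bound in $\delta$ into an exponential lower bound in $\rho/\delta$.

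Once the inequality $L > 4M + K$ is established, Theorem~\ref{thm:new_CMM} applies and produces a subset $C' \subset V\cP = C$ with $\la G_c \mid c \in C \ra \cong \Asterisk_{c \in C'} G_c$, completing the proof. Apart from the exponential-versus-linear bookkeeping in the choice of constants, every step is routine once Theorems~\ref{thm:proj_com_from_DGO} and~\ref{thm:new_CMM} are granted.
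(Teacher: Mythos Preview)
Your proposal is correct and follows essentially the same route as the paper: apply Theorem~\ref{thm:proj_com_from_DGO} to produce a projection complex with an $L$-spinning family, then verify the numerical inequality $L > 4M+K$ so that Theorem~\ref{thm:new_CMM} applies. The only cosmetic difference is that the paper fixes $R = \delta\log_2(\delta) + 16\delta$ (chosen so that $2^{R/\delta} = 65536\delta$, making the final inequality a one-line check), whereas you take the maximal $R = \rho/2 - 3\delta$; both choices yield the same exponential-beats-linear verification, and your added remark that $\langle G_c \mid c\in C\rangle$ is already normal is a helpful clarification the paper leaves implicit.
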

    \begin{proof}
      Take $\theta = 121\delta$, $K = 3\theta$, and let $R = \delta\log_2(\delta)+16\delta$, which meets the constraint $2+2\delta \le R \le \frac{\rho}{2} - 3\delta$. Then by
      Theorem~\ref{thm:proj_com_from_DGO}, the group $G$ acts by isometries on a projection
      complex $\cP = \cP(C,\theta, K)$ obtained from a collection
      $(C, \{d_p\}_{p\in C})$ satisfying the strong projection axioms, and the family of subgroups $\{G_c\}$ is an equivariant $L$-spinning family for $L = 2^{\frac{R-2}{\delta}} - 4-248\delta$.
        
        
        One can check that our choice of $R$ satisfies $L>4M+K$, where $M$ is the Bounded Geodesic Image Theorem constant given in Theorem~\ref{thm:bgi}. Indeed, as $R = \delta\log_2(\delta)+16\delta$, we have the following equivalent inequalities:
        \begin{align*}
         L &> 4M+K, \\
         2^{\frac{R-2}{\delta}}- 4-248\delta &> 4(8K+2\theta)+K, \\
        2^{\frac{R-2}{\delta}} &> 13195\delta + 4.
        \end{align*}
        Since $\delta \geq 1$ it suffices to check
        \begin{align*}
        65536\delta=2^{\frac{R}{\delta}} &> 4(13199\delta) = 52796\delta.
        \end{align*}       
        Thus, the hypotheses of Theorem~\ref{thm:new_CMM} are satisfied, so $\la \la G_c \ra\ra_{c \in C} \leq G$ is isomorphic to a free product $\Asterisk_{c \in C'} G_c$, for some subset $C' \subset C$ as desired. 
    \end{proof}
    
\section{Loxodromic Elements}

In this final section we prove the second halves of Theorems \ref{DGO} and \ref{thm:cmm_intro} which state that our subgroup of $G$ consists of elements that are either point stabilizers in some $G_{c}$ or act loxodromically on both the hyperbolic metric space $X$ and the projection complex $\cP$. We begin with the action on the projection complex.

\begin{prop} \label{prop:projloxodromic}
	Let $\cP$, $G$, and $\{G_{c}\}_{c \in V\cP}$ be as in Theorem \ref{thm:new_CMM}. Then every element of the subgroup of $G$ generated by $\{G_{c}\}_{c \in V\cP}$ is either loxodromic in $\cP$ or is contained in some $G_{c}$. 
\end{prop}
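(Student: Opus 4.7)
The plan is to combine the free product decomposition of $H := \la G_c \ra_{c \in V\cP}$ from Theorem~\ref{thm:new_CMM} with the canoeing bound of Corollary~\ref{cor:cpbound} and the windmill construction used in the proof of Theorem~\ref{thm:new_CMM}.

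\textbf{Reduction to a cyclically reduced normal form.} Let $g \in H$ and suppose $g$ is not contained in any $G_c$; we will show $g$ acts loxodromically on $\cP$. By Theorem~\ref{thm:new_CMM}, $H \cong \Asterisk_{c \in \cO} G_c$. The equivariance condition gives $h G_c h^{-1} = G_{hc}$ for every $h \in H$, so any $H$-conjugate of an element of a free factor lies in some $G_{c'}$; hence our hypothesis forces $g$ to not be $H$-conjugate into any factor. Since loxodromicity on $\cP$ is conjugacy-invariant, we may replace $g$ by a conjugate and assume $g = h_1 h_2 \cdots h_m$ is cyclically reduced in the free product: $h_i \in G_{c_i} \setminus \{1\}$ for some $c_i \in \cO$, with $c_i \neq c_{i+1}$ for $1 \leq i < m$, $c_m \neq c_1$, and $m \geq 2$.

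\textbf{Canoeing path from $v_0$ to $g^n v_0$.} Choose $k$ large enough that $g \in G_k$ and each $c_i$ lies in $N_{k-1}$. The expression
\[ g^n \;=\; (h_1 h_2 \cdots h_m)(h_1 h_2 \cdots h_m) \cdots (h_1 h_2 \cdots h_m) \]
(with $n$ blocks and $nm$ letters total) is a minimal decomposition of $g^n$ in $G_k$ in the sense of the proof of Theorem~\ref{thm:new_CMM}: consecutive letters within a block stabilize different vertices by reducedness, and consecutive letters across blocks stabilize different vertices by cyclic reducedness $c_m \neq c_1$. Applying the construction from Claims~\ref{claim:vertex_intersect} and~\ref{claim_IH1} to this decomposition produces an $L$-canoeing path $\gamma_n$ from $v_0$ to $g^n v_0$ that passes through the $nm$ intersection points of consecutive translates of $N_{k-1}$ appearing in the decomposition. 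Each such intersection point is a large-angle point of $\gamma_n$: by the proof of Claim~\ref{claim:vertex_intersect}, the edges immediately before and after it are related by a nontrivial element of the corresponding vertex stabilizer, and the spinning condition forces their angle to be at least $L$.

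\textbf{Conclusion and main obstacle.} Corollary~\ref{cor:cpbound} then yields $d_\cP(v_0, g^n v_0) \geq nm/2$, which tends to infinity with $n$; hence $g$ acts loxodromically on $\cP$. The most delicate point is the cyclic reduction step: one must argue carefully that $g \notin \bigcup_c G_c$ together with equivariance forces $g$ to not be $H$-conjugate into any free factor, so that cyclic reduction terminates at length $m \geq 2$ with $c_1 \neq c_m$. This is essential, as cancellation at block junctions in $g^n$ would otherwise collapse the decomposition and destroy the linear count of large-angle intersection points needed to apply the canoeing bound.
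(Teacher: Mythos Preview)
Your overall strategy—reduce to a cyclically reduced word in the free product, build a canoeing path to $g^n v_0$, and apply Corollary~\ref{cor:cpbound}—is the same as the paper's in spirit, but there is a genuine gap in how you invoke the construction of Claims~\ref{claim:vertex_intersect} and~\ref{claim_IH1}.

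The construction in Claim~\ref{claim_IH1} does not merely require that consecutive letters stabilize \emph{different} vertices of $N_{k-1}$; it requires (via Claim~\ref{claim:vertex_intersect}) that each $v_{i+1}\in N_{k-1}\setminus W_{k-1}$, equivalently $g_{i+1}\notin G_{k-1}$. Your vertices $c_i$ lie in $\cO=\bigcup_{j\ge -1}\cO_j$, and if $c_i\in\cO_j$ with $j<k-1$ then $c_i\in W_{k-1}$ and $h_i\in G_{k-1}$; in that case $h_1\cdots h_{i-1}\cdot N_{k-1}=h_1\cdots h_i\cdot N_{k-1}$, so there is no intersection point at that step and no large-angle vertex is produced there. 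Your justification ``consecutive letters stabilize different vertices'' is therefore not the minimality used in Claim~\ref{claim_IH1}. Worse, choosing $k$ ``large enough'' pushes \emph{all} the $c_i$ into $W_{k-1}$, so the construction yields no large-angle points at all at that level.

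The paper sidesteps this level-mismatch by not working with the full free product normal form. It picks any $k$ with $g\in G_k$ and uses the Bass--Serre tree $S_k$ for the splitting $G_k=G_{k-1}*\bigl(\Asterisk_{v\in\cO_{k-1}}G_v\bigr)$. If $g$ is loxodromic on $S_k$, a basepoint $x_0$ on the axis (an intersection point of two translates of $N_{k-1}$) has $d_{S_k}(x_0,g^n x_0)$ linear in $n$; the canoeing path from (I1) between $x_0$ and $g^n x_0$ then has at least roughly that many large-angle points by the ``furthermore'' clause of (I2), and Corollary~\ref{cor:cpbound} finishes. If $g$ is elliptic on $S_k$, it is conjugate into $G_{k-1}$ or some $G_c$, and one recurses on $k$. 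Your argument can be repaired along these lines (or by taking $k$ minimal and arguing that at least the top-level letters $c_i\in\cO_{k-1}$ contribute one large-angle point per block of $g^n$), but as written the appeal to Claim~\ref{claim_IH1} is not valid.
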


\begin{proof}
	Let $g$ be an element of the group generated by $\{G_{c}\}_{c
          \in C\cP}$. By the proof of Theorem \ref{thm:new_CMM}, $g$ is
        contained in $G_{k}$ for some $k$. Now $G_{k}$ acts on the
        Bass-Serre tree which is the skeleton, $S_{k}$, of the cover
        of $W_{k}$ by the translates of $N_{k-1}$. Let us first assume
        that $g$ acts on this tree loxodromically. Let $x_{0}$, an
        intersection point of two translates of $N_{k-1}$, be a point on the axis of $g$ in $S_{k}$. Thus in $S_{k}$ we have that $d_{S_{k}}(x_{0},g^{n}x_{0})$ grows linearly in $n$. 
	
	Now we move from the Bass-Serre tree back to $\cP$. Note that
        $x_{0}$ is also a point in $\cP$ and consider the orbit of
        $x_{0}$ in $\cP$. Also, $x_{0}$ and all of its translates are
        large angle intersection points of distinct translates of
        $N_{k-1}$. Given any $n$ we can apply (I1) to form an
        $L$-canoeing path $\gamma$ from $x_{0}$ to $g^{n}x_{0}$. Let
        $m=d_{S_{k}}(x_{0},g^{n}x_{0})$. Since each of the
        $g^{i}x_{0}$ are intersection points between translates of the
        $N_{k-1}$, we can apply the furthermore statement of (I2) to
        see that the number of large angle points on $\gamma$ is at
        least $\frac{m}{2}-1$. Apply Corollary \ref{cor:cpbound} to
        see that $d(x_{0},g^{n}x_{0}) \geq \frac{m-2}{4}$ with $m$
        growing linearly in $n$. We conclude that the translation
        length of $g$ is strictly positive and hence $g$ acts
        loxodromically on $\mathcal P$.
	
	Now if $g$ fixes a point in $S_{k}$ then it is conjugate into either one of the $G_{c}$ or $G_{k-1}$. However, now we can just run the argument again in $G_{k-1}$, continuing until $G_{0} = G_{v_{0}}$ if necessary. 
\end{proof}

We next see that we can push this result forward again to the original $\delta$-hyperbolic space, $X$. 

\begin{prop} \label{prop:metricloxodromic}
	Let $G$ and $\cC$ be as in Theorem \ref{thm:new_proof_DGO}. Then every element of the subgroup of $G$ generated by the set $\{G_{c}\}_{c\in C}$ is either a loxodromic isometry of $X$ or it is contained in some $G_{c}$. 
\end{prop}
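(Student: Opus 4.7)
The plan is to leverage Proposition~\ref{prop:projloxodromic} to reduce the claim to a purely geometric comparison between distances in the projection complex $\cP$ and distances in $X$. Let $g$ be in the subgroup of $G$ generated by $\{G_c\}_{c \in C}$. In the setting of Theorem~\ref{thm:new_proof_DGO}, the family $\{G_c\}$ is an $L$-spinning family on the projection complex $\cP$ constructed in Theorem~\ref{thm:proj_com_from_DGO}, and the constants are arranged so that Theorem~\ref{thm:new_CMM}, and hence Proposition~\ref{prop:projloxodromic}, applies. That proposition yields the dichotomy: either $g$ lies in some $G_c$ (and we are done, since such $g$ fixes $c \in X$), or $g$ acts loxodromically on $\cP$. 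In the latter case, fixing any vertex $c_0 \in V\cP = C$, the distance $d_\cP(c_0, g^n c_0)$ grows linearly in $n$, and the task is to transfer this linear growth to $X$.

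The main step, and the main obstacle, is to establish a linear lower bound of the form $d_X(c,c') \geq A \cdot d_\cP(c,c') - B$ for any two apices $c, c' \in C$ and some constants $A > 0$, $B \geq 0$. To prove this, I would first invoke Lemma~\ref{lem:spqg} to obtain $|\cY_K(c,c')| \geq d_\cP(c,c') - 1$. Each $a \in \cY_K(c,c')$ is an apex satisfying $d_a(c,c') > K$ in the upgraded projections; since Theorem~\ref{thm:projupgrade} perturbs projection distances by at most $2\theta$ and we took $K \geq 3\theta$, this still yields $d_a(c,c') > 4\delta$ for the original projections, so the contrapositive of Lemma~\ref{boundedproj} forces every geodesic $\gamma$ from $c$ to $c'$ in $X$ to intersect $B_{R+2\delta}(a)$. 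Fix such a $\gamma$, and for each $a \in \cY_K(c,c')$ pick a point $p_a \in \gamma \cap B_{R+2\delta}(a)$. The $\rho$-separation of $C$ together with the triangle inequality yields $d_X(p_a, p_{a'}) \geq \rho - 2(R+2\delta)$ for distinct $a, a' \in \cY_K(c,c')$, and this is strictly positive by the standing constraint $R \leq \rho/2 - 3\delta$. Since the $p_a$ all lie along the geodesic $\gamma$, we conclude $d_X(c,c') = \ell(\gamma) \geq (|\cY_K(c,c')| - 1)(\rho - 2(R+2\delta))$, giving the desired linear lower bound.

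Applying this comparison with $c = c_0$ and $c' = g^n c_0$, and combining it with the linear growth of $d_\cP(c_0, g^n c_0)$ in $n$, produces a linear lower bound on $d_X(c_0, g^n c_0)$. This shows that $g$ has positive translation length on $X$ and is therefore a loxodromic isometry of $X$, completing the proof. The only real bookkeeping here is tracking the effect of the upgrade in Theorem~\ref{thm:projupgrade}, but the $2\theta$ slack is harmless for the explicit constants chosen in Theorem~\ref{thm:new_proof_DGO}, so this does not present a serious difficulty.
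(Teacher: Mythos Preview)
Your proof is correct and shares the same geometric core as the paper's: both invoke the contrapositive of Lemma~\ref{boundedproj} to force any geodesic $[c,c']$ in $X$ through the balls $B_{R+2\delta}(a)$ for suitably many apices $a$, and then use $\rho$-separation to extract linear growth in $X$.

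The execution differs in one respect worth noting. You treat Proposition~\ref{prop:projloxodromic} as a black box and prove a general coarse inequality $d_X(c,c') \gtrsim d_{\cP}(c,c')$ for arbitrary apices, using Lemma~\ref{lem:spqg} to count vertices on the standard path. The paper instead reaches back into the proof of Proposition~\ref{prop:projloxodromic}: it takes the specific basepoint $x_0$ lying on the axis of $g$ in the skeleton $S_k$, observes that each $g^i x_0$ is a large-angle point on the canoeing path from $x_0$ to $g^n x_0$, and hence (via Proposition~\ref{prop:canoe_distinct}) lies on the standard path, giving $d_{g^i x_0}(x_0, g^n x_0) > K$ directly for $i=1,\ldots,n-1$. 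From there the two arguments coincide. Your route is cleaner in that it decouples the two propositions and avoids revisiting the windmill structure; the paper's route is slightly more direct in that it names the apices explicitly as orbit points and bypasses Lemma~\ref{lem:spqg}.
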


\begin{proof}
  We first apply Theorem \ref{thm:new_proof_DGO} and run the argument above in $\cP$. Thus for any $g\in G$ we either have $g \in G_{c}$ for some $c$ or we have an orbit $\{g^{n}x_{0}\}$ such that for any $n\geq 2$ we have that $d_{g^{i}x_{0}}(x_{0},g^{n}x_{0}) > K > 4\delta$ for all $i = 1,\ldots,n-1$. Thus by Lemma \ref{boundedproj} we have that every geodesic from $x_{0}$ to $g^{n}x_{0}$ passes through each of the balls $B_{R+2\delta}(g^{i}x_{0})$ for $i=1,\ldots,n-1$. Now our choice of $\rho$ and $R$ guarantees that each of these balls are distance at least $2\delta$ from each other so that $d(x_{0},g^{n}x_{0}) \geq 2\delta(n-1)$. We conclude that the translation length of $g$ is strictly positive and hence $g$ is loxodromic.
\end{proof}

\bibliographystyle{alpha}
\bibliography{Ref}

\end{document}